\newtheorem{theorem}{Theorem}[section]
\newtheorem{proposition}[theorem]{Proposition}
\newtheorem{claim}[theorem]{Claim}
\newtheorem{definition}[theorem]{Definition}
\newtheorem{corollary}[theorem]{Corollary}
\newtheorem{lemma}[theorem]{Lemma}
\theoremstyle{definition}
\numberwithin{equation}{section}
\newcommand\bm{\begin{pmatrix}}
\renewcommand\em{\end{pmatrix}}
\renewcommand\({\left(}
\renewcommand\){\right)}
\renewcommand\[{\left[}
\renewcommand\]{\right]}
\newcommand\er{\eqref}
\newcommand\be[1]{\begin{equation}\label{#1}}
\newcommand\ee{\end{equation}}
\newcommand\I{\int\limits}
\renewcommand\S{\sum\limits}
\renewcommand\P{\prod\limits}
\newcommand\ab[2]{\begin{pmatrix}{#1}&{#2}\end{pmatrix}}
\newcommand\ba[2]{\begin{pmatrix}{#1}\\{#2}\end{pmatrix}}
\newcommand\bb[4]{\begin{pmatrix}{#1}&{#2}\\{#3}&{#4}\end{pmatrix}}
\newcommand\h[1]{{\hat{#1}}}
\renewcommand\t[1]{{\tilde{#1}}}
\newcommand\w[1]{{\widetilde{#1}}}
\renewcommand\o[1]{{\overline{#1}}}
\newcommand\F[1]{{\sqrt{#1}}}
\newcommand\f[2]{{\frac{#1}{#2}}}
\newcommand\la{\alpha}
\newcommand\lb{\beta}
\renewcommand\lg{\gamma}
\newcommand\lG{\Gamma}
\newcommand\ld{\delta}
\newcommand\lD{\Delta}
\newcommand\Le{\epsilon}
\newcommand\lh{\eta}
\renewcommand\ll{\lambda}
\newcommand\lL{\Lambda}
\newcommand\lm{\mu}
\renewcommand\ln{\nu}
\newcommand\lf{\phi}
\renewcommand\lq{\psi}
\newcommand\lp{\pi}
\newcommand\lr{\rho}
\newcommand\ls{\sigma}
\newcommand\lS{\Sigma}
\newcommand\lT{\Theta}
\newcommand\Lt{\tau}
\newcommand\lo{\omega}
\newcommand\lO{\Omega}
\newcommand\lx{\xi}
\newcommand\lz{\zeta}
\newcommand\ui{\cap}
\newcommand\ic{\subset}
\newcommand\xx{\times}
\newcommand\xt{\otimes}
\newcommand\oo{\infty}
\newcommand\ol{\bullet}
\newcommand\oc{\circ}
\newcommand\dl{\partial}
\newcommand\al{\approx}
\newcommand\el{\ell}
\newcommand\yi{\wedge}
\newcommand\sm{\setminus}
\newcommand\AL{\mathcal A}
\newcommand\BL{\mathcal B}
\newcommand\CL{\mathcal C}
\newcommand\EL{\mathcal E}
\newcommand\HL{\mathcal H}
\newcommand\IL{\mathcal I}
\newcommand\JL{\mathcal J}
\newcommand\KL{\mathcal K}
\newcommand\LL{\mathcal L}
\newcommand\PL{\mathcal P}
\newcommand\SL{\mathcal S}
\newcommand\TL{\mathcal T}
\newcommand\Bl{\mathbf B}
\newcommand\Cl{\mathbf C}
\newcommand\Nl{\mathbf N}
\newcommand\Ol{\mathbf O}
\newcommand\Pl{\mathbf P}
\newcommand\Rl{\mathbf R}
\newcommand\Sl{\mathbf S}
\newcommand\oL{\frak o}
\newcommand\bi{\begin{itemize}}
\newcommand\ei{\end{itemize}}
\begin{document}
\title{Dixmier Trace for Toeplitz Operators on Symmetric Domains}
\author{Harald Upmeier and Kai Wang}

\address{ Fachbereich Mathematik,
Universit$\ddot{a}$t Marburg, Marburg, 35032, Germany}
\email{upmeier@mathematik.uni-marburg.de}

\address{School of Mathematical Sciences,
Fudan University, Shanghai, 200433, P. R. China}
\email{kwang@fudan.edu.cn}

 \subjclass[2010]{32M15; 42B35; 47B35}
\keywords{bounded symmetric domain, Toeplitz operator, Dixmier trace}
\thanks{The second  author was partially supported by NSFC (11271075,11420101001), the Alexander von Humboldt Foundation and Laboratory of Mathematics for Nonlinear Science at Fudan University.}

\maketitle

\begin{abstract}
For Toeplitz operators on bounded symmetric domains of arbitrary rank, we define a Hilbert quotient module corresponding to partitions of length $1$ and prove that it belongs to the Macaev class $\LL^{n,\oo}$. We next obtain an explicit formula for the Dixmier trace of Toeplitz commutators in terms of the underlying boundary geometry.\end{abstract}

\section{Introduction}
The Dixmier trace of Hilbert space operators \cite{C2}, of fundamental importance for pseudo-differential operators \cite{C1,W}, has recently found deep applications in {\bf complex analysis}, for Hankel and Toeplitz operators on strictly pseudo-convex domains \cite{AFJP,EGZ,EZ,ER,HH} and for homogeneous Hilbert quotient modules over the unit ball \cite{DTY,EE,GW,GWZ}. In these applications the underlying operators are essentially normal, i.e. commutators are compact; more precisely, belong to certain norm ideals of Schatten type.

In this paper we are concerned with operators of Toeplitz or Hankel type which are not essentially commuting. These operators arise naturally when the underlying domain $D\ic\Cl^d$ is not strictly pseudo-convex or does not have a smooth boundary. The most important case is the so-called {\bf hermitian bounded symmetric domains} $D=G/K$ of arbitrary rank $r$, which generalize the unit disk and the unit ball of rank $1.$ In this paper, we construct a suitable Hilbert quotient module of the Hardy space over the Shilov boundary $S$ and study the associated 'sub-Toeplitz' operators. Our first main result shows that commutators of such operators belong to the Macaev class $\LL^{n,\oo},$ for a suitable $n$ related to the geometry of $D.$ The second main result is an explicit formula for the Dixmier trace of products of such operators, in terms of a Jordan theoretic Grassmann-type manifold.

The results of this paper can be generalized to cover the weighted Bergman spaces instead of the Hardy space, at least for the continuous part of the Wallach set \cite{FK}. On the other hand, extending these results to all smooth functions $f\in\CL^\oo(S)$ will be more challenging, even for the basic case of rank $2$-domains (involving pseudo-differential operators on spheres \cite{BC,BCK}).
Finally, the higher strata of the boundary of $D$ give rise to a family of smooth extensions \cite{U5} and it is of interest to develop a family version of the Dixmier trace (involving cyclic cohomology) for the associated Toeplitz commutators.

\section{Symmetric domains and Toeplitz operators}
Let $D$ be an irreducible bounded symmetric domain of rank $r$ in a complex vector space $Z$ of finite dimension $d$. The unit ball $D=\Bl_d\ic\Cl^d$ corresponds to rank $r=1.$ Denote by $G=Aut(D)$ the biholomorphic automorphism group, and put
$$K:=\{g\in G: g(0)=0\}.$$
Then $D=G/K.$ It is well known \cite{FK,L} that $D$ can be realized as the open unit ball of an irreducible {\it hermitian Jordan triple} $Z$. Thus $Z$ is a complex vector space endowed with a Jordan triple product
$$u,v,w\mapsto\{uv^*w\}\in Z\qquad\forall\,u,v,w\in Z.$$
Then $K=Aut(Z)$ is the linear group of all triple automorphisms of $Z.$ Let $S$ be the Shilov boundary of $D$. Since $K$ acts transitively on $S,$ there exists a unique $K-$invariant probability measure $ds$ on $S$. Denote by $L^2(S)$ the space of $L^2$-integrable functions, with inner product
\be{5}(f|g)_S:=\I_{S}ds\,\o{f(s)}\,g(s),\ee
and define the {\bf Hardy space}
$$H^2(S)=\{\lq\in L^2(S):\,\lq\mbox{ holomorphic on }D\}.$$
For a bounded function $f$, define the {\bf Toeplitz operator}
$$T_f\lq= P_{H^2(S)}(f\lq)\qquad\forall\lq\in H^2(S).$$
In previous work \cite{U1,U2} it was shown that Toeplitz operators $T_f$ with smooth symbol function $f\in\CL^\oo(S)$, acting on
$H^2(S),$ generate a $C^*$-algebra $\TL(S)$ which is not essentially commutative (if $r>1$) but has a {\bf composition series}
$$\KL=\IL_1\ic\IL_2\ic\cdots\ic\IL_r\ic\TL(S)=\IL_{r+1},$$
starting with the compact operators $\KL,$ such that the subquotients $\IL_{k+1}/\IL_k$ are essentially commutative. More precisely,
there is a stable isomorphism
$$\IL_{k+1}/\IL_k\al\CL(S_k)\xt\KL,$$
where $S_k$ denotes the $K$-homogeneous manifold of all 'tripotents' of rank $k.$ (Similar results hold for Toeplitz operators on weighted Bergman spaces over $D,$ as shown in \cite{U4}.) An element $c\in Z$ such that $\{cc^*c\}=c$ is called a {\bf tripotent}. Every tripotent induces a {\bf Peirce decomposition}
$$Z=Z^2_c\oplus Z^1_c\oplus Z^0_c,$$
where $Z^\la_c:=\{z\in Z:\,\{cc^*z\}=2\la z\}.$
The Peirce $2$-space is a Jordan $*$-algebra with unit element $c$ and involution $z\mapsto \{cz^*c\}.$ The self-adjoint part $X_c\ic Z^2_c$ is a so-called {\bf euclidean Jordan algebra} \cite{FK}. Let $(z|w)$ denote the $K$-invariant inner product normalized by the condition $(c|c)=1$ for each minimal tripotent $c\in Z.$ Let $\PL(Z)$ be the algebra of all (holomorphic) polynomials on $Z,$ endowed with the $K$-invariant {\bf Fischer-Fock inner product}
\be{4}(p|q)_Z:=\f1{\lp^d}\I_{Z}dz\,e^{-(z|z)}\o{p(z)}\,q(z)\ee
for all $p,q\in\PL(Z).$ By \cite{FK,U3} the natural action of $K$ on $\PL(Z)$ induces a multiplicity-free {\bf Peter-Weyl decomposition}
\be{1} \PL(Z)=\S_\ll\PL_\ll(Z),\ee
where
$$\ll=\ll_1\ge\ldots\ge \ll_r\ge 0$$
runs over all integer {\bf partitions} of length $\le r.$ The decomposition \er{1} is orthogonal under \er{4}. We let $\Nl^r_+$ denote the set of all such partitions. As usual we will identify partitions that differ only by zeros. Then
$$\Nl^r_+=\bigcup_{\el=1}^r\Nl^\el_+,$$
where $\Nl^\el_+=\{\ll\in\Nl^r_+:\,\ll_{\el+1}=0\}.$ As a special type of partition we denote
$$k_\el:=(k,\ldots,k,0\ldots,0)$$
for $1\le\el\le r$ and $k\in\Nl$ repeated $\el$ times. Choose a frame $e_1,\ldots,e_r$ of minimal tripotents. The associated joint Peirce decomposition \cite{L} defines two numerical invariants $a,b$ for $Z$ such that
$$\lr:=\f dr=1+\f a2(r-1)+b.$$
For the Hilbert unit ball ($r=1$) we put $a=2$ and $b=d-1.$ Thus $b=0$ only for the unit disk. In case $b=0$ the Jordan triple $Z$ is actually a {\bf Jordan algebra} with unit element
$$e:=e_1+\cdots+e_r.$$
In this case $Z$ carries a {\bf Jordan determinant} $N=N_r$ which is normalized by $N(e)=1.$ For $1\le\el\le r$ denote by $N_\el$ the Jordan determinant polynomial for the Peirce $2$-space $Z^2_{e_1+\ldots+e_\el}.$ As shown in \cite{U3} $\PL_\ll(Z)$ has the highest weight vector
\be{2}N_\ll(z):=N_1(z)^{\ll_1-\ll_2}N_2(z)^{\ll_2-\ll_3}\cdots N_r(z)^{\ll_r}.\ee
The {\bf multi-variable Pochhammer symbol} is the product
$$(s)_\ll:=\P_{i=1}^r(s-\f a2(i-1))_{\ll_i}$$
of the usual Pochhammer symbols $(\ln)_m=\P_{i=1}^m(\ln+i-1).$ By \cite{U1,FK} the inner products \er{4} and \er{4} are related by
\be{30}(p|q)_S=\f1{(\lr)_\ll}(p|q)_Z,\qquad\forall\, p,q\in\PL_\ll(Z).\ee
We note the relation
$$\f{(\lr)_\ll}{(\lr-b)_\ll}=\P_{j=1}^r\f{(\ll_j+1+\f a2(r-j))_b}{(1+\f a2(r-j))_b}.$$
\begin{proposition}\label{nn} For $\ll\in\Nl^r_+$ we have
\be{12}\|N_\ll\|_S^2=\f{(\lr-b)_\ll}{(\lr)_\ll}\P_{1\le i<j\le r}\f{(1+\f a2(j-i-1))_{\ll_i-\ll_j}}{(1+\f a2(j-i))_{\ll_i-\ll_j}}.\ee
\end{proposition}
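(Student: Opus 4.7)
The plan is to reduce, via the Hardy--Fischer relation \er{30}, to the equivalent Fischer--Fock norm identity
$$\|N_\ll\|_Z^2=(\lr-b)_\ll\,\P_{1\le i<j\le r}\f{(1+\f a2(j-i-1))_{\ll_i-\ll_j}}{(1+\f a2(j-i))_{\ll_i-\ll_j}},$$
and then to evaluate this by $K$-invariant polar coordinates adapted to the maximal tripotent $e=e_1+\ldots+e_r$. Decomposing $Z=(X_e\op iX_e)\op Z^1_e$, the symmetric cone $\lO\ic X_e$ parametrizes the radial directions, $K/K_e$ fills out the angular ones, and in the non-tube case $b>0$ the Peirce $1$-space $Z^1_e$ is integrated out separately.

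By \er{2} we have $N_\ll=\P_j N_j^{\ll_j-\ll_{j+1}}$, a conical power function on $\lO$. The radial integral therefore becomes a Gindikin gamma integral, whose shift ratio $\lG_\lO(\lr-b+\ll)/\lG_\lO(\lr-b)=\P_j(\lr-b-\f a2(j-1))_{\ll_j}=(\lr-b)_\ll$ contributes the first factor in the identity. The Harish--Chandra type Jacobian of the polar map on the compact orbit $K/K_e$, which is classical Faraut--Kor\'anyi theory \cite{FK}, produces the cross-term product $\P_{i<j}$ of Pochhammer ratios indexed by the positive roots on the frame $e_1,\ldots,e_r$. In the non-tube case, a beta-type Gaussian integration over $Z^1_e$ supplies the shift from $\lr$ to $\lr-b$, in exact agreement with the identity $\f{(\lr)_\ll}{(\lr-b)_\ll}=\P_j\f{(\ll_j+1+\f a2(r-j))_b}{(1+\f a2(r-j))_b}$ noted in the excerpt.

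The main technical obstacle is to correctly coordinate the four normalization constants --- the Fischer--Fock Gaussian on $Z$, the $K$-invariant measure on $K/K_e$, the cone measure on $\lO$, and the Gaussian on $Z^1_e$ --- so that they combine to give precisely the stated right-hand side. The tube-type case $b=0$ is classical, and the general case follows once the beta integral on $Z^1_e$ is evaluated, its net effect being exactly the $b$-shift $\lr\to\lr-b$ in the Pochhammer prefactor.
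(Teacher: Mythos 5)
Your overall plan --- pass to the Fischer--Fock norm via \eqref{30} and then evaluate --- is in the same spirit as the paper, whose proof is essentially a citation to \cite{U1} and \cite[Prop.~XI.4.3]{FK} combined with the reciprocity \eqref{6}. But one step of your sketch is not just vague, it is wrong: the claimed ``beta-type Gaussian integration over $Z^1_e$'' that is supposed to supply the shift $\lr\to\lr-b$. Each conical polynomial $N_j$ is, by construction, a polynomial on the Peirce $2$-space $Z^2_{e_1+\cdots+e_j}\ic Z^2_e$, so $N_\ll(z)$ depends only on the projection $P^2_e z$. Consequently the Fock integral factorizes, and the $Z^1_e$ Gaussian integral is exactly $\lp^{d_1}$, absorbed by the normalization $\lp^{-d}$ in \eqref{4}. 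It contributes no shift whatsoever; $\|N_\ll\|_Z^2=\|N_\ll\|_{Z^2_e}^2$ on the nose. The $b$-shift has a different origin: $Z^2_e$ is a Jordan algebra of tube type with its own rank parameter $\t\lr=1+\f a2(r-1)=\lr-b$, and the correct reduction is to apply the tube-case formula to $Z^2_e$ together with the relation \eqref{30} both for $Z$ and for $Z^2_e$. This is precisely the mechanism used later in the paper's proof of Lemma \ref{i}, and it gives
$$\|N_\ll\|_S^2=\f1{(\lr)_\ll}\|N_\ll\|_Z^2=\f1{(\lr)_\ll}\|N_\ll\|_{Z^2_e}^2=\f{(\t\lr)_\ll}{(\lr)_\ll}\|N_\ll\|_{\t S}^2=\f{(\lr-b)_\ll}{(\lr)_\ll}\|N_\ll\|_{\t S}^2,$$
after which the cross-term product is exactly the tube-type ($b=0$) case of \eqref{12} for $\t S$.

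A second, smaller issue: you conflate the additive Peirce decomposition $Z=Z^2_e\op Z^1_e$ with the multiplicative spectral (polar) decomposition $z=k\cdot(t_1e_1+\cdots+t_re_r)$, $k\in K$. These are not the same coordinate system, and a honest Gindikin--gamma computation of $\|N_\ll\|_{Z^2_e}^2$ runs through the latter, with the Vandermonde-type Jacobian and the $K$-average of $|N_\ll(k\cdot t)|^2$ producing the cross-term Pochhammer ratios. That part is indeed ``classical Faraut--Kor\'anyi'' as you say, but it is a genuine computation, not a formality, and your description does not supply the key step of carrying out the $K$-average of the non-$K$-invariant highest weight vector. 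In sum: your final formula is right, your reduction-to-$Z$-Fock step is right, your identification of which Pochhammer factors should appear is right, but the explanation of how $b>0$ enters is incorrect, and the tube-case evaluation is asserted rather than proved. Replacing the spurious $Z^1_e$ beta integral with the elementary factorization argument above would repair the non-tube step.
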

\begin{proof} Using the reciprocity relation
\be{6}\f{(x+b)_m}{(x)_m}=\f{(x+m)_b}{(x)_b}\ee
for integers $0\le b\le m,$ the assertion follows from \cite{U1} or (for tube domains) \cite[Proposition XI.4.3]{FK}.
\end{proof}
For any partition $\ll$ let
\be{7}P_\ll:\PL(Z)\to\PL_\ll(Z)\ee
denote the orthogonal projection. If $f_u(z)=(z|u)$ is a linear functional associated with $u\in Z,$ we simply write $T_u:=T_{f_u}.$
Moreover, $u^\dl$ denotes the directional derivative. By \cite[Theorem 2.11]{U1} we have
\be{16}T_u^*q=\S_i P_{\ll-[i]}T_u^*q=\S_{i=1}^r\f1{\ll_i+\f a2(r-i)+b}P_{\ll-[i]}u^\dl q\ee
for all $q\in\PL_\ll(Z),$ where
$$[i]=(0,\ldots,0,1,0,\ldots,0)$$
with $1$ at position $i.$ More precisely, only those terms occur where $\ll-[i]$ is again a partition.
\begin{definition} Let $\SL$ denote the set of all sequences
$$c_m=c_0+\f{c_1}{m+1}+\oL_m,$$
where $c_0,c_1$ are constants and the sequence $\{m^2\oL_m\}_m$ is bounded. Let $\SL_+$ denote the set of sequences in $\SL$ with $c_0>0$.
\end{definition}
It is clear that $\SL$ is closed under taking finite sums and products of sequences. $\SL_+$ is also closed under taking quotients.
\begin{lemma}\label{qn} Let $\la,\lg\in\Nl^{r-1}_+.$ Then $\left\{\f{\|N_{m-k,\lg}\|_S^2}{\|N_{m,\la}\|_S^2}\right\}_m\in\SL_+.$
\end{lemma}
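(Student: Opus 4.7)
The plan is to apply Proposition~\ref{nn} to both norms and expand the resulting quotient in powers of $1/m$. For $\ll=(m,\la_1,\ldots,\la_{r-1})$, I would first isolate the factors in Proposition~\ref{nn} that are independent of $m$: namely, the subproduct over $2\le i<j\le r$, together with those Pochhammer symbols in $\f{(\lr-b)_\ll}{(\lr)_\ll}$ that depend only on $\la_1,\ldots,\la_{r-1}$. These combine into a strictly positive constant $A(\la)>0$; similarly $A(\lg)>0$ arises for the numerator. The $m$-dependent remainder reduces to a product of Pochhammer ratios of the form $\f{(s)_{m-c}}{(t)_{m-c}}$ with $s,t,c$ fixed.

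For each such ratio I would invoke the Stirling expansion of $\Gamma$ to write
$$\f{(s)_{m-c}}{(t)_{m-c}}=\f{\Gamma(t)}{\Gamma(s)}\,m^{s-t}\(1+\f{\lx_{s,t,c}}{m}+O(m^{-2})\),$$
and then multiply these expansions together. The sum of the exponents $s-t$ across the $m$-dependent factors of $\|N_{m,\la}\|_S^2$ equals $-b-\f{a}{2}(r-1)=1-\lr$ by the normalization $\lr=1+\f{a}{2}(r-1)+b$, and the same total appears for $\|N_{m-k,\lg}\|_S^2$, so the leading powers $m^{1-\lr}$ cancel in the quotient. Collecting the surviving terms gives
$$\f{\|N_{m-k,\lg}\|_S^2}{\|N_{m,\la}\|_S^2}=c_0+\f{c_1}{m}+O(m^{-2})$$
with $c_0>0$ (since norms are positive and all leading constants match up) and $c_1$ an explicit real number. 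Using $\f{c_1}{m}-\f{c_1}{m+1}=O(m^{-2})$, this exactly matches the defining form of $\SL_+$, which is the conclusion.

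The main obstacle will be the bookkeeping: isolating the $m$-dependent part of Proposition~\ref{nn} cleanly, verifying the cancellation of the leading exponent $1-\lr$, and tracking each Pochhammer expansion to order $O(m^{-2})$. Positivity of $c_0$ is automatic from positivity of the norms, so no separate check is needed beyond the asymptotic expansion itself.
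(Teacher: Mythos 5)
Your proposal is correct, but it takes a genuinely different route from the paper's. The paper also starts from Proposition~\ref{nn}, but instead of expanding asymptotically it exploits the massive \emph{cancellation} in the quotient: since $\|N_{m-k,\lg}\|_S^2$ and $\|N_{m,\la}\|_S^2$ involve Pochhammer symbols whose indices differ only by fixed integers, the ratio collapses algebraically to a constant times a finite product of falling Pochhammer quotients of \emph{bounded} index,
$$\f{\|N_{m-k,\lg}\|_S^2}{\|N_{m,\la}\|_S^2}=C\,\P_{j=1}^{r-1}\f{(m-\la_j+\f a2 j)_{k+\lg_j-\la_j}^*}{(m-\la_j+\f a2(j-1))_{k+\lg_j-\la_j}^*},$$
each factor being a ratio of polynomials in $m$ of the same (finite) degree, hence manifestly in $\SL_+$, and the conclusion follows because $\SL_+$ is closed under products. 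You instead keep the Pochhammer symbols $(s)_{m-c}$ with growing index, convert them to $\Gamma$-function ratios, and invoke Stirling to get $m^{s-t}(1+\lx/m+O(m^{-2}))$, then check that the exponents $s-t$ sum to $1-\lr$ on both sides so the leading powers cancel. This is a sound argument and it does establish the required $O(m^{-2})$ control, but it is heavier machinery than the paper needs: the paper's manipulation makes the quotient a rational function of $m$ outright, so no transcendental asymptotics are required, positivity of each factor is visible, and membership in $\SL_+$ is immediate rather than extracted from an expansion. Your route does have the minor advantage of not requiring the convention $k+\lg_j\ge\la_j$ since $\Gamma$-quotients make sense for either sign, but you should still explicitly note (as the paper does) that the exponent cancellation is what guarantees $c_0$ is finite and nonzero; positivity of $c_0$ then indeed follows from positivity of norms as you say.
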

\begin{proof} In terms of the falling Pochhammer symbol $(m)_j^*=\P_{i=1}^j(m+1-i),$ \er{12} implies
$$\f{\|N_{m-k,\lg}\|_S^2}{\|N_{m,\la}\|_S^2}=C\P_{j=1}^{r-1}\f{(m-\la_j+\f a2 j)_{k+\lg_j-\la_j}^*}{(m-\la_j+\f a2(j-1))_{k+\lg_j-\la_j}^*}$$
whenever $k+\lg_j\ge\la_j.$ Since each factor belongs to $\SL_+,$ the assertion follows.
\end{proof}
\begin{lemma}\label{i} Let $\el\le r$ and $\ll\in\Nl^\el_+.$ Then
$$T_{N_\el^k}^*N_\ll=\P_{j=1}^\el\f{(\ll_j+\f a2(\el-j))_k^*}{(\ll_j+\f a2(r-j)+b)_k^*}N_{\ll-k_\el},\qquad\forall\,k\le\ll_\el.$$
\end{lemma}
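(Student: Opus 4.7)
The plan has two parts: (i) show that $T_{N_\el^k}^*N_\ll$ is a scalar multiple of $N_{\ll-k_\el}$, and (ii) compute the scalar. Granting (i), part (ii) is a direct calculation using Proposition \ref{nn}; the main obstacle is (i).

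For (i), since $N_\el^k$ is holomorphic we have $T_{N_\el^k}=T_{N_\el}^k$ and hence $T_{N_\el^k}^*=(T_{N_\el}^*)^k$, so it is enough to treat $k=1$ and iterate. Weight considerations alone are insufficient: $T_{N_\el}^*N_\ll$ has torus-weight $\ll-1_\el$ and polynomial degree $|\ll|-\el$, but the Pieri-type decomposition of $\PL_{1_\el}\cdot\PL_\lm$ permits, a priori, nonzero contributions in $\PL_\ls$ for partitions $\ls\ne\ll-1_\el$ that still dominate $\ll-1_\el$. The cleanest resolution passes through Fischer-Fock space: by \er{30}, $T_{N_\el}^*$ agrees, up to component-wise rescaling within the Peter-Weyl decomposition, with the Fischer-Fock adjoint differential operator $N_\el(\dl)$, and the Jordan-theoretic Capelli identity of \cite{U3,FK} gives $N_\el(\dl)N_\ll\in\Cl\cdot N_{\ll-1_\el}$. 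Rescaling by $(\lr)_{\ll-1_\el}/(\lr)_\ll$ to return to the Hardy picture then transfers the proportionality.

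For (ii), once $T_{N_\el^k}^*N_\ll=c\,N_{\ll-k_\el}$ is known, the scalar $c$ is fixed by adjointness: the factorization \er{2} immediately gives $N_\el^k\cdot N_{\ll-k_\el}=N_\ll$, and therefore
$$c\,\|N_{\ll-k_\el}\|_S^2=(T_{N_\el^k}^*N_\ll\,|\,N_{\ll-k_\el})_S=(N_\ll\,|\,N_\ll)_S=\|N_\ll\|_S^2,$$
so $c=\|N_\ll\|_S^2/\|N_{\ll-k_\el}\|_S^2$. This ratio can now be read off from \er{12}: the factor $(\lr-b)_\ll/(\lr)_\ll$ contributes $\P_{i=1}^\el(\ll_i+\f a2(r-i))_k^*/(\ll_i+\f a2(r-i)+b)_k^*$, while the double product over $i<j$ changes only for pairs with $i\le\el<j\le r$. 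Using $\ll_j=0$ for $j>\el$ (as $\ll\in\Nl^\el_+$), the surviving factors telescope in $j$ to $\P_{i=1}^\el(\ll_i+\f a2(\el-i))_k^*/(\ll_i+\f a2(r-i))_k^*$, and multiplying the two pieces gives exactly the ratio of falling Pochhammer symbols asserted in the lemma.
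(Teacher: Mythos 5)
Your proposal is correct, and the route is genuinely different from the paper's. The paper proves the lemma in one stroke: it restricts to the Peirce $2$-space $\t Z=Z^2_{e_1+\cdots+e_\el}$, passes through the chain of inner products $(\cdot|\cdot)_S\to(\cdot|\cdot)_Z\to(\cdot|\cdot)_{\t Z}\to(\cdot|\cdot)_{\t S}$ via \er{30}, and uses that $|N_\el|\equiv 1$ on the Shilov boundary $\t S$ (so the adjoint of $T_{N_\el^k}$ on $H^2(\t S)$ is literally multiplication by $N_\el^{-k}$) to obtain $T_{N_\el^k}^*N_\ll=\f{(\t\lr)_\ll(\lr)_{\ll-k_\el}}{(\t\lr)_{\ll-k_\el}(\lr)_\ll}N_{\ll-k_\el}$ directly, with the Pochhammer ratio evaluated by \er{31}. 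You instead split the argument: proportionality of $T_{N_\el^k}^*N_\ll$ to $N_{\ll-k_\el}$ is obtained by passing to Fischer--Fock and invoking a Capelli-type relation $N_\el(\dl)N_\ll\in\Cl\cdot N_{\ll-1_\el}$, after which the scalar is pinned down purely from adjointness and the explicit norm formula \er{12}. Your part (ii) is clean and I checked that the telescoping of the double product in \er{12} over the pairs $i\le\el<j$ does reproduce $\P_{i=1}^\el(\ll_i+\f a2(\el-i))_k^*/(\ll_i+\f a2(r-i))_k^*$, which combined with the ratio $(\lr-b)_\ll(\lr)_{\ll-k_\el}/(\lr)_\ll(\lr-b)_{\ll-k_\el}$ gives exactly the asserted quotient of falling Pochhammer symbols. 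The trade-off is that your step (i) leans on a Capelli-type identity cited only vaguely, and you should note that both the Capelli relation and the rescaling $P_\nu T_{N_\el}^*q=\f{(\lr)_\nu}{(\lr)_\mu}P_\nu N_\el(\dl)q$ (for $q\in\PL_\mu$) are most transparently established, exactly as in the paper, by first restricting to the full-rank Peirce $2$-space $\t Z$ where $N_\el$ is the Jordan determinant; so the two arguments really diverge only in how they finish, namely your reduction of the constant to Proposition \ref{nn} versus the paper's direct $\t S$-unimodularity computation. An advantage of your route is that it makes the proportionality explicit rather than implicit (the paper's conclusion ``since $\lf$ is arbitrary'' quietly presupposes that $T_{N_\el^k}^*N_\ll$ lies in the closure of $\PL(\t Z)$), and it reuses Proposition \ref{nn}; the paper's route is shorter once the Peirce restriction is set up.
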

\begin{proof} Consider the Peirce $2$-space $\t Z=Z^2_{e_1+\ldots+e_\el}$ of rank $\el$ and put $\t\lr=1+\f a2(\el-1).$ Using
$N_\ll=N_\el^k\,N_{\ll-k_\el}$ and applying \er{30} to $S\ic Z$ and $\t S\ic\t Z,$ we obtain for $\lf\in\PL(\t Z)$
$$(\lr)_\ll(\lf|T_{N_\el^k}^*N_\ll)_S=(\lr)_\ll(N_\el^k\lf|N_\ll)_S=(N_\el^k\lf|N_\ll)_Z=(N_\el^k\lf|N_\ll)_{\t Z}
=(\t\lr)_\ll(N_\el^k\lf|N_\ll)_{\t S}$$
$$=(\t\lr)_\ll(\lf|N_{\ll-k_\el})_{\t S}=\f{(\t\lr)_\ll}{(\t\lr)_{\ll-k_\el}}(\lf|N_{\ll-k_\el})_{\t Z}=\f{(\t\lr)_\ll}{(\t\lr)_{\ll-k_\el}}(\lf|N_{\ll-k_\el})_Z=\f{(\t\lr)_\ll(\lr)_{\ll-k_\el}}{(\t\lr)_{\ll-k_\el}}(\lf|N_{\ll-k_\el})_S.$$
Since $\lf$ is arbitrary, it follows that
$$T_{N_\el^k}^*N_\ll=\f{(\t\lr)_\ll(\lr)_{\ll-k_\el}}{(\t\lr)_{\ll-k_\el}(\lr)_\ll}N_{\ll-k_\el}.$$
We have
\be{31}\f{(\lr)_\ll}{(\lr)_{\ll-k_\el}}=\P_{j=1}^\el(\ll_j+\f a2(r-j)+b)_k^*.\ee
Applying \er{31} to $Z$ and $\t Z,$ the assertion follows.
\end{proof}
We will now consider partitions $(m,0,\ldots,0)=m$ of length $1,$ with projection $P_m:H^2(S)\to\PL_m(Z).$ Here $\PL_m(Z)$ is spanned by the $K$-orbit of the conical polynomial $N_1^m.$  As shown in \cite{U2}, the projection
\be{8}P:=\S_m P_m\ee
on $H^2(S)$ belongs to the Toeplitz $C^*$-algebra $\TL(S).$ For a partition $\ll$ choose an orthonormal basis $p_i\in\PL_\ll(Z),$ for the inner product \er{4}. Then
$$A^\ll:=\S_i T_{p_i}PT_{p_i}^*$$
is a $K$-invariant operator, independent of the choice of orthonormal basis. Since the decomposition \er{1} is multiplicity-free, every $K$-invariant operator $T$ on $\PL(Z)$ (or $H^2(S)$) is a 'diagonal' operator. Define
$$\ll':=(\ll_2,\ldots,\ll_r)\in\Nl^{r-1}_+.$$
\begin{lemma}\label{z} Let $p\in\PL_\ll(Z)$ such that $PT_p^*N_{m,\lb}\ne 0.$ Then $\lb\le\ll\le(m,\lb).$
\end{lemma}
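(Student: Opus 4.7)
The approach is to translate the vanishing condition into a statement about Peter--Weyl multiplicities in a polynomial product, and then read off the interlacing.

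First, since $P=\S_n P_n$ is the projection onto the length-$1$ partitions, $PT_p^*N_{m,\lb}\ne 0$ iff there exist $n\in\Nl$ and $q\in\PL_{(n)}(Z)$ with $\<T_p^*N_{m,\lb},q\>_S\ne 0$. Passing the adjoint and using that the holomorphic polynomial $pq$ is already in $H^2(S)$ (so $T_pq=pq$), this is equivalent to $\<N_{m,\lb},pq\>_S\ne 0$. By the orthogonality of the Peter--Weyl decomposition \er{1} (valid for both $(\cdot|\cdot)_S$ and $(\cdot|\cdot)_Z$ thanks to \er{30}), this forces the $K$-type $\PL_{(m,\lb)}(Z)$ to appear nontrivially in the polynomial product $\PL_\ll(Z)\cdot\PL_{(n)}(Z)\ic\PL(Z)$.

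Second, I would invoke the Pieri-type multiplication rule in the Fischer--Fock setting: $\PL_\mu(Z)$ is a constituent of $\PL_\ll(Z)\cdot\PL_{(n)}(Z)$ only if $\mu/\ll$ is a horizontal strip of size $n$, i.e.\
$$\mu_1\ge\ll_1\ge\mu_2\ge\ll_2\ge\cdots\ge\ll_{r-1}\ge\mu_r\ge\ll_r\ge 0,\qquad |\mu|-|\ll|=n.$$
Specializing to $\mu=(m,\lb)$ (so $\mu_1=m$ and $\mu_j=\lb_{j-1}$ for $j\ge 2$) gives
$$m\ge\ll_1\ge\lb_1\ge\ll_2\ge\lb_2\ge\cdots\ge\lb_{r-1}\ge\ll_r\ge 0,$$
which is exactly $\lb\le\ll\le(m,\lb)$: the upper bounds $\ll_1\le m$ and $\ll_j\le\lb_{j-1}$ for $j\ge 2$, and the lower bounds $\ll_j\ge\lb_j$ for $j\le r-1$ (with $\ll_r\ge 0$ automatic).

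The main obstacle is justifying this Pieri rule in the Jordan-theoretic context. For tube type ($b=0$) it is a Jordan-algebraic version of classical Pieri and can be read off from \cite{FK}; for general $Z$ it follows from the $K$-branching rules for $\PL(Z)$ recorded in \cite{U3}. Alternatively one can argue directly by induction on $|\ll|$: writing $p$ in terms of linear coordinates $u_j\in Z$ and using $T_p^*=T_{u_k}^*\cdots T_{u_1}^*$ together with the explicit formula \er{16}, one sees that $T_p^*N_{m,\lb}$ is a sum of terms indexed by sequences of unit box-removals from $(m,\lb)$, each intermediate shape being a valid partition; averaging over the $K$-orbit spanning $\PL_\ll$ then forces the terminal partition to be of length $1$ only when the above interlacing holds.
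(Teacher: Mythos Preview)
Your proposal is correct and follows essentially the same approach as the paper: reduce the non-vanishing condition to the occurrence of $\PL_{(m,\lb)}$ in $\PL_\ll\cdot\PL_{(n)}$, then apply the horizontal-strip Pieri rule to read off the interlacing $\lb\le\ll\le(m,\lb)$. The only real difference is the justification of the Pieri rule: the paper cites it directly from Stanley's work on Jack symmetric functions \cite[Proposition~5.3]{S2}, which covers all irreducible $Z$ uniformly, whereas your references to \cite{FK} and \cite{U3} are less precise and the inductive sketch at the end is unnecessary.
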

\begin{proof} For $\lf\in\PL_{n,0}(Z)$ the non-zero components of $p\lf$ correspond to signatures $\lm$ obtained from $\ll$ by adding a horizontal $n$-strip \cite[Proposition 5.3]{S2}. Thus
$$\lm'\le\ll\le\lm.$$
It follows that $(\lf|T_p^*N_{m,\lb})_S=(p\,\lf|N_{m,\lb})_S$ is non-zero only if $\lm=(m,\lb)$ satisfies the above condition, which leads to $\lb\le\ll\le(m,\lb).$
\end{proof}
Since
$$\mbox{Ran}(T_{p_i}P)\ic\S_{\lm'\le\ll\le\lm}P_\lm$$
by Lemma \er{z}, it follows that
\be{32}A^\ll=\S_{\lm'\le\ll\le\lm}\f{(N_\lm|A^\ll N_\lm)_S}{\|N_\lm\|_S^2}\,P_\lm,\ee
where
\be{cn}\f{(N_\lm|A^\ll N_\lm)_S}{\|N_\lm\|_S^2}=\f1{\|N_\lm\|_S^2}(N_\lm|\S_i T_{p_i}PT_{p_i}^*\,N_\lm)_S=\f1{\|N_\lm\|_S^2}\S_i\|PT_{p_i}^*N_\lm\|_S^2.\ee
\begin{proposition}\label{j} Let $\ll\in\Nl^r_+.$ Then
$\left\{\f{(N_{m,\ll'}|A^\ll N_{m,\ll'})_S}{\|N_{m,\ll'}\|_S^2}\right\}_m\in\SL_+.$
\end{proposition}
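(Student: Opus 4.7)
The plan is to derive an explicit formula for $c_m := (N_{m,\ll'}|A^\ll N_{m,\ll'})_S/\|N_{m,\ll'}\|_S^2$ and then invoke Lemma \ref{qn}. Starting from (\ref{cn}), set $\psi_i := PT_{p_i}^*N_{m,\ll'}$, so that $c_m\|N_{m,\ll'}\|_S^2 = \S_i\|\psi_i\|_S^2$. By Lemma \ref{z} (applied with $\lb=\ll'$), for $m\ge\ll_1$ each $\psi_i$ lives in the single length-one component $\PL_{m-\ll_1}(Z)$; the finitely many smaller values of $m$ form only a transient part of the sequence and do not affect $\SL_+$-membership.

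The central step is a Schur/duality computation. For $\phi\in\PL_{m-\ll_1}$, (\ref{30}) gives $(\phi|\psi_i)_S=(p_i\phi|N_{m,\ll'})_S=(\lr)_{(m,\ll')}^{-1}(p_i\phi|N_{m,\ll'})_Z$. Choosing a Fischer-Fock orthonormal basis $\{\phi_j\}$ of $\PL_{m-\ll_1}$ and converting the $S$- and $Z$-norms on $\PL_{m-\ll_1}$ produces
$$\S_i\|\psi_i\|_S^2 \;=\; \f{(\lr)_{m-\ll_1}}{(\lr)_{(m,\ll')}^2}\,\|\pi^*N_{m,\ll'}\|_{Z\xt Z}^2,$$
where $\pi:\PL_\ll(Z)\xt\PL_{m-\ll_1}(Z)\to\PL_{(m,\ll')}(Z)$ is the composition of $K$-equivariant multiplication with the Peter-Weyl projection $P_{(m,\ll')}$, the tensor product being equipped with the Fischer-Fock norm. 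The weight $(m,\ll')$ occurs in $\PL_\ll\xt\PL_{m-\ll_1}$ with multiplicity one: writing $\alpha+\beta=(m,\ll')$ for weights of the two factors, the bounds $\alpha_1\le\ll_1$ and $\beta_1\le m-\ll_1$ are forced to be equalities, pinning $\alpha=\ll$ and $\beta=(m-\ll_1,0,\ldots,0)$, and each of these weight spaces is one-dimensional (spanned by $N_\ll$ and $N_1^{m-\ll_1}$). Hence $\pi^*N_{m,\ll'}$ is a scalar multiple of $v_0:=N_\ll\xt N_1^{m-\ll_1}$; since $\pi(v_0)=N_\ll\cdot N_1^{m-\ll_1}=N_{m,\ll'}$ by (\ref{2}), the proportionality constant is determined, yielding
$$\|\pi^*N_{m,\ll'}\|_{Z\xt Z}^2 \;=\; \f{\|N_{m,\ll'}\|_Z^4}{\|N_\ll\|_Z^2\,\|N_1^{m-\ll_1}\|_Z^2}.$$

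Substituting back and converting every $Z$-norm to an $S$-norm via (\ref{30}), all powers of $(\lr)_{(m,\ll')}$ and $(\lr)_{m-\ll_1}$ cancel, producing the explicit formula
$$c_m \;=\; \f{1}{(\lr)_\ll\,\|N_\ll\|_S^2}\cdot\f{\|N_{m,\ll'}\|_S^2}{\|N_{m-\ll_1}\|_S^2}.$$
The prefactor is a positive $m$-independent constant. Lemma \ref{qn} applied with $\la=\ll'$, $\lg=0$, $k=\ll_1$ (its hypothesis $k+\lg_j\ge\la_j$ reducing to $\ll_1\ge\ll_{j+1}$, which holds since $\ll$ is a partition) yields $\{\|N_{m-\ll_1,0}\|_S^2/\|N_{m,\ll'}\|_S^2\}_m\in\SL_+$; closure of $\SL_+$ under reciprocation and multiplication by positive constants then gives $c_m\in\SL_+$. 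The chief difficulty is the weight-multiplicity step: its validity rests on the standard fact that every weight of the irreducible $K$-module $\PL_\ll(Z)$ lies in the convex hull of the Weyl orbit of $\ll$, so its first coordinate is bounded by $\ll_1$. Once this is granted the rest is routine bookkeeping between Hardy and Fischer-Fock norms.
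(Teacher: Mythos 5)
Your proof is correct, and it takes a genuinely different route from the paper's. The paper argues by induction on the length $\el$ of $\ll$: setting $k=\ll_\el$ and $\lg=\ll-k_\el$, it uses Lemma \ref{i} to peel off a factor of $N_\el^k$, reduces to the Peirce $2$-space $Z^2_{e_1+\ldots+e_\el}$, and relates $(N_{m,\ll'}|A^\ll N_{m,\ll'})_S$ to $(N_{m-k,\lg'}|A^\lg N_{m-k,\lg'})_S$ up to a factor $c^2c_m^2\in\SL_+$, with the base case being trivial. You instead bypass the induction entirely by producing a closed-form expression for the diagonal coefficient: the Schur-orthogonality computation correctly converts $\S_i\|PT_{p_i}^*N_{m,\ll'}\|_S^2$ into a Fischer--Fock quantity $\|\pi^*N_{m,\ll'}\|_{Z\otimes Z}^2$, and the observation $N_\ll\cdot N_1^{m-\ll_1}=N_{m,\ll'}$ from \er{2} pins down the highest-weight vector $\pi^*N_{m,\ll'}$ once the multiplicity-one claim is granted. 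The resulting formula $c_m=\f1{(\lr)_\ll\|N_\ll\|_S^2}\,\f{\|N_{m,\ll'}\|_S^2}{\|N_1^{m-\ll_1}\|_S^2}$ is clean (note it equals $\f{\|N_{m,\ll'}\|_S^2}{\|N_\ll\|_Z^2\,\|N_1^{m-\ll_1}\|_S^2}$), and Lemma \ref{qn} then finishes the argument exactly as you say. Your approach buys an explicit formula; the paper's inductive approach avoids any tensor-product representation theory and stays closer to Jordan-theoretic manipulations with Pochhammer symbols.

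One remark on the step you identify as the chief difficulty. Your phrasing of the multiplicity argument in terms of ``the first coordinate of a weight'' is imprecise: the partitions $\ll$ are labels for highest weights of the (generally nonabelian) group $K$, and general weights of $\PL_\ll(Z)$ need not be partitions, so ``$\alpha_1\le\ll_1$'' does not literally parse. The clean statement you actually need is the standard fact that for any reductive $K$ and irreducibles $V_\alpha,V_\beta$ with highest weights $\alpha,\beta$, the weight-$(\alpha+\beta)$ subspace of $V_\alpha\otimes V_\beta$ is one-dimensional, spanned by the tensor product of highest weight vectors (because $\nu\le\alpha$, $\nu'\le\beta$, and $\nu+\nu'=\alpha+\beta$ in the dominance order forces $\nu=\alpha$, $\nu'=\beta$). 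Since the highest weight of $\PL_{(m,\ll')}$ is the sum of the highest weights of $\PL_\ll$ and $\PL_{m-\ll_1}$ (again from $N_\ll N_1^{m-\ll_1}=N_{m,\ll'}$), this applies directly and closes the gap. Alternatively, one could cite the Pieri rule already invoked in Lemma \ref{z}: $\PL_{(m,\ll')}$ occurs with multiplicity one in $\PL_\ll\otimes\PL_{m-\ll_1}$, so the space of highest weight vectors of that type is one-dimensional. Either way the conclusion you draw is correct.
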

\begin{proof} The proof is by induction on the length $\el\le r$ of $\ll.$ Put $k:=\ll_\el>\ll_{\el+1}=0.$ Then
$\lg:=\ll-k_\el$ has length $<\el.$ Consider the Peirce $2$-space $\t Z:=Z^2_{e_1+\ldots+e_\el}$ of rank $\el.$ We may assume that a subfamily $p_i:i\in\t I$ is an orthonormal basis of $\PL_\ll(\t Z).$ Since $\PL_\ll(\t Z)=N_\el^k\,\PL_\lg(\t Z),$ there exists a constant $c>0$ such that $p_i=c\cdot N_\el^k\,q_i$ for all $i\in\t I,$ where $q_i\in\PL_\lg(\t Z)$ is an orthonormal basis. For
$m\ge\ll_2$ it follows from Lemma \er{i} that $T_{N_\el^k}^*N_{m,\ll'}=c_m N_{m-k,\lg'},$ where
$$c_m=\f{(m+\f a2(\el-1))_k^*}{(m+b+\f a2(r-1))_k^*}\P_{j=2}^\el\f{(\ll_j+\f a2(\el-j))_k^*}{(\ll_j+b+\f a2(r-1))_k^*}$$
belongs to $\SL_+,$ in view of the identity
$$\f{m+a}{m+b}=1+\f{a-b}m-\f{(a-b)b}{m(m+b)}.$$
For $i\in I\sm\t I$ we have $T_{p_i}^*N_{m,\ll'}=0$ since $p_i$ belongs to the ideal generated by $\t Z^\perp.$ It follows that
$$(N_{m,\ll'}|A^\ll N_{m,\ll'})_S=\S_{i\in I}(T_{p_i}^*N_{m,\ll'}|PT_{p_i}^*N_{m,\ll'})_S
=\S_{i\in\t I}(T_{p_i}^*N_{m,\ll'}|PT_{p_i}^*N_{m,\ll'})_S$$
$$=c^2\S_{i\in\t I}(T_{q_i}^*T_{N_\el^k}^*N_{m,\ll'}|PT_{q_i}^*T_{N_\el^k}^*N_{m,\ll'})_S
=c^2\cdot c_m^2\S_{i\in\t I}(T_{q_i}^*N_{m-k,\lg'}|PT_{q_i}^*N_{m-k,\lg'})_S.$$
Now consider the $K$-invariant operator
$$A^\lg=\S_{j\in J}T_{q_j}PT_{q_j}^*,$$
where $q_j,\,j\in J$ is an orthonormal basis of $\PL_\lg(Z).$ We may assume that $q_i,i\in\t I$ are a subfamily of $J.$
As above, we have $T_{q_j}^*N_{m-k,\lg'}=0$ whenever $j\in J\sm\t I.$ Therefore
$$A^\lg N_{m-k,\lg'}=\S_{j\in J}T_{q_j}PT_{q_j}^*N_{m-k,\lg'}=\S_{i\in\t I}T_{q_i}PT_{q_i}^*N_{m-k,\lg'}$$
and hence $(N_{m,\ll'}|A^\ll N_{m,\ll'})_S=c^2\cdot c_m^2(N_{m-k,\lg'}|A^\lg N_{m-k,\lg'})_S.$ Since $\lg$ has length $<\el,$ the induction hypothesis implies that $\left\{\f{(N_{m-k,\lg'}|A^\lg N_{m-k,\lg'})_S}{\|N_{m-k,\lg'}\|_S^2}\right\}_m\in\SL_+.$ It follows that the sequence
$$\f{(N_{m,\ll'}|A^\ll N_{m,\ll'})_S}{\|N_{m,\ll'}\|_S^2}
=c^2\,c_m^2\,\f{(N_{m-k,\lg'}|A^\lg N_{m-k,\lg'})_S}{\|N_{m-k,\lg'}\|_S^2}\,\f{\|N_{m-k,\lg'}\|_S^2}{\|N_{m,\ll'}\|_S^2}$$
belongs to $\SL_+,$ since Lemma \er{qn} implies that $\left\{\f{\|N_{m-k,\lg'}\|_S^2}{\|N_{m,\ll'}\|_S^2}\right\}_m\in\SL_+.$
\end{proof}

\section{Hilbert submodule and sub-Toeplitz operators}
The Hilbert sum
$$H^2_1(S)=\S_m\PL_m(Z)=\mbox{Ran}(P)$$
will be called the {\bf sub-Hardy space}. For smooth symbols $f\in\CL^\oo(S)$ define the {\bf sub-Toeplitz operator}
$$S_f:=P\,f\,P=P\,T_f\,P$$
as a bounded operator on $H^2_1(S).$ Let $\AL$ be the $*$-algebra generated by $S_p$ for polynomial symbols $p\in\PL(Z)$. For $p,q\in\PL(Z)$ we have
$$S_pS_q=S_{pq}$$
since $PT_qP^\perp=0.$ Thus it often suffices to consider linear symbols $z\mapsto(z|u)$ for some $u\in Z.$ We denote by $S_u$ the corresponding operators.
\begin{theorem}\label{r} For $\lm\in\Nl^r_+$ let $p\in\PL(Z)$ satisfy $\deg p\le|\lm'|.$ Then
$$P T_p^*T_q P=S_{T_p^*q}\qquad\forall\,q\in\PL_\lm(Z).$$
\end{theorem}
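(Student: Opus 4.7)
The plan is to prove the equivalent inner-product identity
\[
(qh\,|\,p\phi)_S \,=\, ((T_p^* q)\, h\,|\,\phi)_S \qquad \text{for all } h,\phi\in H^2_1,
\]
which gives $PT_p^* T_q P = S_{T_p^* q}$ on $H^2_1$. First I decompose $p=\sum_\al p_\al$ into Peter-Weyl components with $p_\al\in\PL_\al(Z)$ and $|\al|\le|\lm'|$. For $h\in\PL_{n_h}(Z)\subset H^2_1$, Lemma \ref{z} forces $qh\in\bigoplus_\mu\PL_\mu(Z)$ with $\lm_i\le\mu_i\le\lm_{i-1}$. The projection of $T_{p_\al}^*(qh)$ onto any length-one $\PL_{(n')}(Z)$ can be nonzero only when $\mu$ is obtained from $\al$ by adding a horizontal $n'$-strip, i.e.\ $\mu_j\le\al_{j-1}$ for $j\ge 2$; combined with $\mu_j\ge\lm_j$ this forces $\al_{j-1}\ge\lm_j$ for $j\ge 2$, hence $|\al|\ge|\lm'|$. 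With the hypothesis $|\al|\le|\lm'|$ this yields $\al=\lm'$ and pins down $\mu=(m,\lm')$, where $m:=\lm_1+n_h$. A parallel analysis on $T_{p_\al}^* q$ shows its length-one component also vanishes unless $\al=\lm'$. Hence I may assume $p\in\PL_{\lm'}(Z)$, and both inner products vanish unless $\phi\in\PL_{(m)}(Z)$.

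Under these reductions, Peter-Weyl orthogonality and the uniqueness of $\mu=(m,\lm')$ in the intersection of the two horizontal-strip decompositions for $qh$ and $p\phi$ give
\[
(qh\,|\,p\phi)_S \,=\, \bigl((qh)_{(m,\lm')}\,\big|\,(p\phi)_{(m,\lm')}\bigr)_S.
\]
For the right-hand side, the identity $((T_p^*q)h\,|\,\phi)_S=(T_p^*q\,|\,T_h^*\phi)_S$ (using $T_h^*\phi=P_{H^2}(\overline h\phi)$ and adjointness) together with the fact that $T_h^*\phi\in\PL_{(\lm_1)}(Z)$ (since $(m)$ is obtained from a length-one partition by a horizontal $n_h$-strip only when that partition is $(\lm_1)$) yields
\[
((T_p^* q)\, h\,|\,\phi)_S \,=\, \bigl((T_p^*q)_{(\lm_1)}\,\big|\,T_h^*\phi\bigr)_S.
\]

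It remains to match these two expressions. Both are $K$-invariant sesqui-multilinear forms in $(p,q,h,\phi)$ on $\PL_{\lm'}(Z)\times\PL_\lm(Z)\times\PL_{n_h}(Z)\times\PL_{(m)}(Z)$ under the diagonal $K$-action, so by linearity and $K$-equivariance it is enough to verify the identity on highest-weight representatives such as $(N_{\lm'},N_\lm,N_1^{n_h},N_1^m)$. I would carry out this verification by induction on the length $\el$ of $\lm$: the case $\el\le 1$ is trivial since $|\lm'|=0$ forces $p$ constant; for the inductive step one factors $N_\lm=N_\el^{\lm_\el}N_\lg$ with $\lg=\lm-(\lm_\el)_\el$ of length $<\el$, applies Lemma \ref{i} to evaluate $T_{N_\el^{\lm_\el}}^*$ explicitly on both sides, and descends to the Peirce $2$-space $Z^2_{e_1+\cdots+e_\el}$ to invoke the inductive hypothesis, mirroring the proof of Proposition \ref{j}. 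The main obstacle is tracking the Pochhammer ratios from Lemma \ref{i} precisely enough to confirm that the coefficients on the two sides match after the reduction; the analogous computations in the proof of Proposition \ref{j} suggest that this cancellation is automatic but requires careful bookkeeping.
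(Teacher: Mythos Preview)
Your reductions are correct and elegant: the passage to $p\in\PL_{\lm'}(Z)$ via the horizontal-strip constraints is valid, and the Schur-type observation that the space of $K$-invariant sesqui-multilinear forms on
\[
\PL_{\lm'}(Z)\times\PL_\lm(Z)\times\PL_{n_h}(Z)\times\PL_{(m)}(Z)
\]
is one-dimensional is also right, since by the multiplicity-free Pieri rule the only common constituent of $\PL_\lm\otimes\PL_{(n_h)}$ and $\PL_{\lm'}\otimes\PL_{(m)}$ is $\PL_{(m,\lm')}$. So both sides equal $c\cdot(\text{same invariant form})$, and you must show $c=1$.

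Here is the gap. On your proposed test tuple $(N_{\lm'},N_\lm,N_1^{n_h},N_1^m)$ \emph{both} sides vanish. Indeed $qh=N_\lm N_1^{n_h}=N_{(m,\lm_2,\ldots,\lm_r)}$, while $p\phi=N_{\lm'}N_1^m=N_1^{m+\lm_2-\lm_3}N_2^{\lm_3-\lm_4}\cdots N_{r-1}^{\lm_r}=N_{(m+\lm_2,\lm_3,\ldots,\lm_r,0)}$, so $(qh\mid p\phi)_S=0$ whenever $\lm_2>0$. Likewise $T_h^*\phi$ is a scalar multiple of $N_1^{\lm_1}$, and $N_{\lm'}N_1^{\lm_1}=N_{(\lm_1+\lm_2,\lm_3,\ldots)}$ is orthogonal to $N_\lm$, so the right-hand side vanishes too. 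The identity $0=0$ says nothing about the constant $c$, and your proposed induction on $\el$ via Lemma~\ref{i} does not escape this: factoring $N_\lm=N_\el^{\lm_\el}N_\lg$ and applying $T_{N_\el^{\lm_\el}}^*$ still lands you on the \emph{same} highest-weight tuple for the shorter partition, where both sides again vanish. You would need a genuinely different evaluation point where the form is nonzero, and producing one together with the matching Pochhammer computation is essentially the whole difficulty.

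The paper's argument is quite different. Rather than the Peter--Weyl decomposition of $p$, it factors $p=(z|u_1)\cdots(z|u_k)$ into linear symbols and uses the explicit formula \er{16} expressing each $T_{u_j}^*$ in terms of directional derivatives and projections $P_{\ll-[i]}$. A dedicated combinatorial lemma (Lemma~\ref{u}) shows that, for $i>1$, the projection $P_{\lm+n[1]-[i]}u^\dl P_{\lm+n[1]}(hq)$ equals $P_{\lm+n[1]-[i]}\bigl(h\,P_{\lm-[i]}u^\dl q\bigr)$, so after iterating, multiplication by $h$ can be pulled past all the derivatives. The point is that the scalar coefficients in \er{16} depend only on $\lm_{i_j}$ with $i_j>1$, which are unchanged when $\lm$ is replaced by $\lm+n[1]$; this is exactly the coefficient-matching you anticipate but do not carry out. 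Your Schur-lemma framework explains \emph{why} only one identity needs checking, which is conceptually nice, but it does not supply that check; the paper's Lemma~\ref{u} is what actually closes the argument.
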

The proof is based on the following Lemma.
\begin{lemma}\label{u} Let $\lm$ be a partition and $q\in\PL_\lm(Z).$ Then we have for $u\in Z$ and each $h\in\PL_{n,0}(Z)$,
$$P_{\lm+n[1]-[i]}u^\dl P_{\lm+n[1]}h q=P_{\lm+n[1]-[i]}h P_{\lm-[i]}u^\dl q\qquad\forall\,i>1.$$
\end{lemma}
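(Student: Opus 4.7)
The plan is to show that both sides of the identity equal $P_{\lm+n[1]-[i]}\,u^\dl(hq)$, by invoking the Leibniz rule together with the Pieri-type constraints of Lemma \er{z} and the $K$-equivariant fact that $u^\dl$ maps $\PL_\mu(Z)$ into $\op_k\PL_{\mu-[k]}(Z)$ (implicit in \er{16}, and reflecting the Pieri decomposition of $Z\xt\PL_\mu(Z)$ as $K$-modules). The hypothesis $i>1$ is precisely what rules out the spurious Pieri components that would otherwise contribute on each side.

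For the left-hand side, expand $hq=\S_\mu P_\mu(hq)$; by Lemma \er{z} the summation runs only over $\mu$ with $\mu'\le\lm\le\mu$. Applying $u^\dl$ to each $P_\mu(hq)\in\PL_\mu(Z)$ and projecting onto $\PL_{\lm+n[1]-[i]}(Z)$, only summands with $\mu=\lm+n[1]-[i]+[k]$ for some $k$ survive. The constraint $\lm_i\le\mu_i$ then reads $\lm_i\le\lm_i-1+\delta_{k,i}$, forcing $k=i$ (the ``$k=1$'' alternative being excluded precisely because $i>1$). Hence only $\mu=\lm+n[1]$ contributes, so the left-hand side equals $P_{\lm+n[1]-[i]}u^\dl(hq)$.

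For the right-hand side, apply the Leibniz rule $u^\dl(hq)=(u^\dl h)\,q+h\,(u^\dl q)$. Since $u^\dl h\in\PL_{n-1,0}(Z)$ is again a length-one polynomial (verify on the highest weight $h=N_1^n$, where $u^\dl N_1^n=n\,N_1(u)\,N_1^{n-1}$, and extend by $K$-equivariance of $u\xt p\mapsto u^\dl p$), Lemma \er{z} confines $(u^\dl h)q$ to signatures $\mu$ with $\lm\le\mu$; the target $\mu=\lm+n[1]-[i]$ violates this at coordinate $i$ when $i>1$, so that contribution drops out. Next, write $u^\dl q=\S_j P_{\lm-[j]}u^\dl q$ and apply Lemma \er{z} to each $h\cdot P_{\lm-[j]}u^\dl q$: the interlacing $\lm-[j]\le\lm+n[1]-[i]$ at coordinate $i$ reads $\lm_i-\delta_{j,i}\le\lm_i-1$, which (using $i>1$) is possible only for $j=i$. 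Hence the right-hand side equals $P_{\lm+n[1]-[i]}(h\cdot u^\dl q)=P_{\lm+n[1]-[i]}u^\dl(hq)$, matching the left-hand side.

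The principal obstacle is the combinatorial bookkeeping at coordinate $i$: one must verify that each of the spurious summands that could contribute --- namely the ``$k=1$'' term on the left, and the $(u^\dl h)q$ and ``$j=1$'' terms on the right --- is killed precisely by the Pieri constraint at row $i$, and that this annihilation mechanism operates only when $i>1$.
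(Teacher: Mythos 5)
Your proof is correct and follows essentially the same route as the paper: both arguments reduce the two sides to $P_{\lm+n[1]-[i]}\,u^\dl(hq)$ by combining the Leibniz rule with the Pieri-type constraints from Lemma \er{z} and the index-shift structure in \er{16}, with the hypothesis $i>1$ entering exactly as you describe to kill the spurious index shifts at row $i$. The only (cosmetic) difference is that the paper rules out the $j=1$ term in the $h(u^\dl q)$ expansion by a first-coordinate degree count, whereas you apply the same row-$i$ interlacing test uniformly to all $j\ne i$; both work.
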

\begin{proof} Write $h q=\S_\ll P_\ll h q.$ The partitions $\ll$ occurring here satisfy $\ll\ge\lm$ and hence $\ll'\ge\lm'.$ For such $\ll$ we have
$$u^\dl P_\ll h q=\S_j P_{\ll-[j]}u^\dl P_\ll h q.$$
Now assume $\ll-[j]=\lm+n[1]-[i].$ If $j=1$ than $\ll'=\lm'-[i]\ngeq\lm'.$ Hence $j>1.$ If $j\ne i$ then $\ll'=\lm'-[i]+[j]\ngeq\lm'.$ Hence $i=j$ and therefore $\ll=\lm+n[1].$ This argument shows
\be{13}P_{\lm+n[1]-[i]}u^\dl(h q)=\S_\ll P_{\lm+n[1]-[i]}u^\dl P_\ll h q=P_{\lm+n[1]-[i]}u^\dl P_{\lm+n[1]}h q.\ee
Since $u^\dl(h q)=q(u^\dl h)+h(u^\dl q)$ and $q(u^\dl h)$ has only components $\ll\ge\lm$ which satisfy $\ll'\ge\lm'$ it follows that
\be{14}P_{\lm+n[1]-[i]}u^\dl(h q)=P_{\lm+n[1]-[i]}h(u^\dl q).\ee
We next show that
\be{15}P_{\lm+n[1]-[i]}h(u^\dl q)=\S_j P_{\lm+n[1]-[i]}h P_{\lm-[j]}u^\dl q=P_{\lm+n[1]-[i]}h P_{\lm-[i]}u^\dl q.\ee
In fact, since $h P_{\lm-[1]}u^\dl q$ cannot have a component $\ll$ with $\ll_1=\lm_1+n$ we may assume $j>1.$ If $j\ne i,$ then the components $\ll\ge\lm-[j]$ occurring in $h P_{\lm-[j]}u^\dl q$ satisfy $\ll'\ge\lm'-[j]$ which implies
$\ll'\ne\lm'-[i].$ Thus \er{15} holds. Combining equations \er{13},\er{14} and \er{15}, the assertion follows.
\end{proof}
{\bf Proof of Theorem \er{r}.} We may assume that $p(z)=(z|u_1)\cdots(z|u_k).$ Let $\ll=(\ll_1,\ll')$ be a partition such that $|\ll'|\ge k.$ Putting $[i_1,\ldots,i_k]=[i_1]+\ldots+[i_k]$ it follows from \er{16} that
$$T_{u_k}^*\ldots T_{u_1}^*\lq=\S_{i_1,\ldots,i_k}P_{\ll-[i_1,\ldots,i_k]}T_{u_k}^*\ldots P_{\ll-[i_1]}T_{u_1}^*\lq$$
for all $\lq\in\PL_\ll(Z).$ If any $i_j=1$ then $(\ll-[i_1,\ldots,i_k])'\ne 0.$ Therefore $(\ll-[i_1,\ldots,i_k])'=0$ implies that all $i_j>1.$ It follows that
\be{17}PT_{u_k}^*\ldots T_{u_1}^*\lq=P\S_{i_1>1,\ldots,i_k>1}P_{\ll-[i_1,\ldots,i_k]}T_{u_k}^*\ldots P_{\ll-[i_1]}T_{u_1}^*\lq.\ee
Moreover, if $|\ll'|>k$ we have $P T_{u_k}^*\ldots T_{u_1}^*\lq=0.$ The same argument shows
\be{18}P u_k^\dl\ldots u_1^\dl\lq=P\S_{i_1>1,\ldots,i_k>1}P_{\ll-[i_1,\ldots,i_k]}u_k^\dl\ldots P_{\ll-[i_1]}u_1^\dl\lq\ee
and $|\ll'|>k$ implies $P u_k^\dl\ldots u_1^\dl\lq=0.$ By Lemma \er{u} we have for $h\in\PL_{n,0}(Z)$
$$P_{ \lm+n[1]-[i_1]}u_1^\dl P_{\lm+n[1]}h q=P_{\lm+n[1]-[i_1]}h P_{\lm-[i_1]}u_1^\dl q.$$
Applying Lemma \er{u} to $P_{\lm-[i_1]}u_1^\dl q$, we obtain
$$P_{\lm+n[1]-[i_1,i_2]}u_2^\dl P_{\lm+n[1]-[i_1]}u_1^\dl P_{\lm+n[1]}h q=P_{\lm+n[1]-[i_1,i_2]}u_2^\dl
P_{\lm+n[1]-[i_1]}h P_{\lm-[i_1]}u_1^\dl q$$
$$=P_{\lm+n[1]-[i_1,i_2]}h P_{\lm-[i_1,i_2]}u_2^\dl P_{\lm-[i_1]}u_1^\dl q.$$
More generally,
\be{19}P_{\lm+n[1]-[i_1,\ldots,i_k]}u_k^\dl\ldots P_{\lm+n[1]-[i_1]}u_1^\dl P_{\lm+n[1]}h q
= P_{\lm+n[1]-[i_1,\ldots,i_k]}h P_{\lm-[i_1,\ldots,i_k]}u_k^\dl\ldots P_{\lm-[i_1]}u_1^\dl q.\ee
Consider
$$P h(T_{u_k}^*\ldots T_{u_1}^*q)=Ph\S_{i_1,\ldots,i_k}P_{\lm-[i_1,\ldots,i_k]}T_{u_k}^*\ldots P_{\lm-[i_1]}T_{u_1}^*q
=P\S_{i_1,\ldots,i_k}\S_\ll P_\ll h P_{\lm-[i_1,\ldots,i_k]}T_{u_k}^*\ldots P_{\lm-[i_1]}T_{u_1}^*q.$$
Note the components $\ll=(m,0)$ occurring here satisfy $\ll'=0\ge(\lm-[i_1,\ldots,i_k])'.$ Since $|\lm'|\ge k$ this implies that all
$i_j>1.$ Moreover, $m=|\ll|=n+|\lm-[i_1,\ldots,i_k]|=n+\lm_1+|\lm'-[i_1,\ldots,i_k]|=n+\lm_1.$ Therefore $\ll=(n+\lm_1,0)$ and hence
\be{7}P h(T_{u_k}^*\ldots T_{u_1}^*q)=P\S_{i_1>1,\ldots,i_k>1}P_{\lm+n[1]-[i_1,\ldots,i_k]}h P_{\lm-[i_1,\ldots,i_k]}
T_{u_k}^*\ldots P_{\lm-[i_1]}T_{u_1}^*q.\ee
We have $h q=\S_\ll P_\ll h q,$ where $\ll\ge\lm$ and the skew-partition $\ll-\lm$ is a horizontal $n$-strip. Since $\ll'\ge\lm'$ satisfies $|\ll'|\ge|\lm'|\ge k$ the condition $(\ll-[i_1,\ldots,i_k])'=0$ implies that all $i_j>1$ and in addition all terms with
$|\ll'|>k$ vanish. Assuming $|\ll'|=k$ it follows that $\ll'=\lm'$ and hence $\ll=\lm+n[1].$ This shows
$$P T_{u_k}^*\ldots T_{u_1}^*(h q)=P\S_\ll T_{u_k}^*\ldots T_{u_1}^* P_\ll h q=P T_{u_k}^*\ldots T_{u_1}^* P_{\lm+n[1]}h q.$$
With \er{17},\er{18}, \er{19} and \er{7}, we obtain
\begin{eqnarray*}P T_{u_k}^*\ldots T_{u_1}^*(h q)&=&P\S_{i_1>1,\ldots,i_k>1}P_{\lm+n[1]-[i_1,\ldots,i_k]}T_{u_k}^*\ldots P_{\lm+n[1]-[i_1]}T_{u_1}^*P_{\lm+n[1]}h q\\
&=&P\S_{i_1>1,\ldots,i_k>1}\f{P_{\lm+n[1]-[i_1,\ldots,i_k]}u_k^\dl\ldots P_{\lm+n[1]-[i_1]}u_1^\dl
 P_{\lm+n[1]}h q}{\((\lm-[i_1,\ldots,i_{k-1}])_{i_k}+\f a2(r-i_k)+b\)\ldots\(\lm_{i_1}+\f a2(r-i_1)+b\)}\\
&=&P\S_{i_1>1,\ldots,i_k>1}\f{P_{\lm+n[1]-[i_1,\ldots,i_k]}h P_{\lm-[i_1,\ldots,i_k]}u_k^\dl\ldots
P_{\lm-[i_1]}u_1^\dl q}{\((\lm-[i_1,\ldots,i_{k-1}])_{i_k}+\f a2(r-i_k)+b\)\ldots\(\lm_{i_1}+\f a2(r-i_1)+b\)}\\
&=&P\S_{i_1>1,\ldots,i_k>1}P_{\lm+n[1]-[i_1,\ldots,i_k]}h P_{\lm-[i_1,\ldots,i_k]}T_{u_k}^*\ldots
P_{\lm-[i_1]}T_{u_1}^*q=P h(T_{u_k}^*\ldots T_{u_1}^*q)\end{eqnarray*}
It follows that $PT_p^*(hq)=Ph(T_p^*q).$ Since $h\in\PL_{n,0}(Z)$ is arbitrary, $PT_p^*T_qP=PT_{T_p^*q}P=S_{T_p^*q}.$ $\quad\quad \Box$\vskip 3mm

Applying Theorem \er{r} we obtain
\begin{corollary}\label{v} If $\deg p,\deg q\le|\ll'|,$ then $PT_p^*A^\ll T_qP\in\AL.$
\end{corollary}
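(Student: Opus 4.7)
The plan is to open up $A^\ll=\sum_i T_{p_i}P T_{p_i}^*$ inside the expression and then apply Theorem~\er{r} to each of the two halves of each summand. Since $\PL_\ll(Z)$ is finite dimensional the basis $\{p_i\}$ is finite, so
\[
PT_p^* A^\ll T_q P=\S_i PT_p^* T_{p_i}\,P\,T_{p_i}^* T_q P,
\]
and, using $P^2=P$ to insert an extra $P$ before $T_{p_i}^*$, each summand factors as
\[
PT_p^* T_{p_i}\,P\,T_{p_i}^* T_q P=(PT_p^* T_{p_i} P)\,(PT_{p_i}^* T_q P).
\]
It will then be enough to show that each of the two factors lies in $\AL$.

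The first factor is a direct application of Theorem~\er{r}: take $\lm=\ll$ (legitimate since $p_i\in\PL_\ll(Z)$), and note that the hypothesis $\deg p\le|\ll'|$ is precisely what the corollary assumes. Thus
\[
PT_p^* T_{p_i} P=S_{T_p^* p_i}.
\]
By \er{16} the polynomial $T_p^* p_i$ again lies in $\PL(Z)$, so $S_{T_p^* p_i}\in\AL$. The second factor is the main subtlety: one cannot apply Theorem~\er{r} to $PT_{p_i}^* T_q P$ directly, because the polynomial in the ``starred'' position is now $p_i$, whose degree $|\ll|$ typically exceeds $|\lm'|$ for the homogeneous components $\lm$ of $q$. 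The workaround is to pass to the adjoint:
\[
(PT_{p_i}^* T_q P)^*=PT_q^* T_{p_i} P,
\]
and apply Theorem~\er{r} to this expression, again with $\lm=\ll$; this requires $\deg q\le|\ll'|$, which is the other assumption of the corollary. We then obtain $PT_q^* T_{p_i} P=S_{T_q^* p_i}$, and taking adjoints back,
\[
PT_{p_i}^* T_q P=S_{T_q^* p_i}^*\in\AL.
\]

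Putting the two factors together yields
\[
PT_p^* A^\ll T_q P=\S_i S_{T_p^* p_i}\,S_{T_q^* p_i}^*,
\]
which is a finite sum of products of elements of the $*$-algebra $\AL$, hence lies in $\AL$. The conceptual obstacle is purely the asymmetry in Theorem~\er{r}: the degree constraint is always on the polynomial attached to $T^*$, so the hypothesis $\deg q\le|\ll'|$ of the corollary must be routed in via the adjoint trick rather than by a direct application.
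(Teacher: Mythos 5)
Your proof is correct and is precisely the argument the paper intends when it says "Applying Theorem~\ref{r} we obtain"; you have supplied the details the paper omits, including the key observation that the second factor $PT_{p_i}^*T_qP$ must be handled by passing to the adjoint $PT_q^*T_{p_i}P$ so that the degree hypothesis of Theorem~\ref{r} lands on $q$ rather than on $p_i$. The resulting expression $\S_i S_{T_p^*p_i}\,S_{T_q^*p_i}^*$ is manifestly in the $*$-algebra $\AL$, as claimed.
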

For $\lb\in\Nl^{r-1}_+,$ consider the projections
$$P^\lb:=\S_{m\ge\lb_1}P_{m,\lb}.$$
Then $P^0=P.$
\begin{definition} Define a diagonal operator $\lL$ on $\PL(Z)$ by
\be{9}\lL p_\ll:=\ll_1\,p_\ll,\qquad\forall\,p_\ll\in\PL_\ll.\ee
\end{definition}
Let $Q_j:=\oplus_{\ll_1=j}\PL_\ll(Z)$ be the eigenvector subspace (and denote the corresponding projection by the same notation) for $\lL$ with eigenvalue $j.$ Then we have the orthogonal decomposition $H^2(S)=\oplus_{j\in\Nl}Q_j$. We call an operator $T$ of {\bf finite propagation} if there exists a positive number $l$ such that
$$T Q_j\ic\bigoplus_{|i-j|\le l}Q_i.$$
\begin{lemma}\label{fp} Suppose the operator $T$ has the finite propagation property. If $T\lL^2$ is bounded, then $\lL^2 T$ and $T^*\lL^2$ are also bounded.
\end{lemma}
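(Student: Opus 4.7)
\emph{Plan.} The strategy is to exploit the finite propagation property through a block-matrix decomposition of $T$ with respect to the eigenspace decomposition $H^2(S)=\oplus_{j\in\Nl}Q_j$. Writing $T_{ij}:=P_i T P_j: Q_j \to Q_i$, finite propagation with constant $l$ means $T_{ij}=0$ whenever $|i-j|>l$. Since finite propagation is preserved under taking adjoints and $\lL^2$ is self-adjoint, the identity $T^*\lL^2 = (\lL^2 T)^*$ (valid on the domain of $\lL^2$ and extending by density) lets us reduce the assertion about $T^*\lL^2$ to that about $\lL^2 T$. I therefore concentrate on $\lL^2 T$.

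The key observation is that $\lL^2$ acts as the scalar $j^2$ on $Q_j$. Hence for any $v_j \in Q_j$ with $j\ge 1$, the hypothesis gives $j^2\|Tv_j\| = \|T\lL^2 v_j\| \le \|T\lL^2\|\,\|v_j\|$. Combined with finite propagation, which constrains the non-zero components of $Tv_j$ to lie in $Q_i$ with $i \le j+l \le (1+l)j$, this yields the per-block bound
$$\|\lL^2 T v_j\| \le (j+l)^2 \|Tv_j\| \le (1+l)^2 \|T\lL^2\|\,\|v_j\|, \qquad j \ge 1.$$
The remaining subspace $Q_0 = \PL_0(Z) = \Cl$ is one-dimensional, so $T|_{Q_0}$ is trivially bounded by some constant $C_0$, giving $\|\lL^2 T v_0\| \le l^2 C_0\|v_0\|$.

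The main obstacle is to assemble these per-block estimates into a uniform bound on $\|\lL^2 T v\|$ for $v = \S_j v_j$: the vectors $Tv_j$ are not mutually orthogonal for varying $j$, so their images under $\lL^2$ cannot simply be added in norm. Finite propagation rescues the argument a second time. For each fixed $i$, the component $(Tv)_i$ receives contributions from at most $2l+1$ indices $j$ (those with $|i-j|\le l$), so the elementary inequality $\|a_1+\cdots+a_n\|^2 \le n(\|a_1\|^2+\cdots+\|a_n\|^2)$ followed by rearrangement of the resulting double sum gives
$$\|\lL^2 T v\|^2 \le (2l+1)\S_j \|\lL^2 T v_j\|^2 \le (2l+1)\((1+l)^4\|T\lL^2\|^2 + l^4 C_0^2\)\|v\|^2.$$
This proves boundedness of $\lL^2 T$ on the dense subspace of finite sums $\sum v_j$, hence on all of $H^2(S)$ by continuous extension, and $T^*\lL^2$ then follows by adjunction.
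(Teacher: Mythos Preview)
Your proof is correct and rests on the same two ingredients as the paper's: finite propagation and the elementary ratio bound $(j+l)^2/j^2\le(1+l)^2$ for $j\ge 1$. The organization, however, differs. The paper decomposes the \emph{operator} into diagonal bands $T=\sum_{|i|\le l}T_i$, where $T_i=\bigoplus_j Q_{j+i}TQ_j$ has fixed shift degree $i$; since $T_i$ sends distinct $Q_j$ to distinct $Q_{j+i}$, orthogonality is automatic and each $\lL^2 T_i$ is bounded directly, so $\lL^2 T$ is a finite sum of bounded operators with no Cauchy--Schwarz step needed. You instead decompose the \emph{input} $v=\sum_j v_j$, obtain the per-block estimate $\|\lL^2 Tv_j\|\le(1+l)^2\|T\lL^2\|\,\|v_j\|$, and then absorb the overlap of the images $Tv_j$ via the factor $2l+1$. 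Both routes are standard ways to handle band-limited operators; the paper's is slightly slicker (no extra constant), while yours is arguably more transparent and also handles the $j=0$ block explicitly, a point the paper glosses over.
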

\begin{proof} By assumption, we have that $T=\oplus_{-l\le i\le l}T_i$ for some number $l$, where
$$T_i=\oplus_j Q_{j+i}TQ_j$$ is an operator of degree $i$. By grading, one sees that each $\lL^2 T_i$ is bounded iff there exists a constant $C_i$ such that $\|T_ip\|\le C_i\f{\|p\|}{j^2}$ for any index $j$ and $p\in Q_j$. Indeed, if such $C_i$ exists, then for any $p=\oplus_j p_j$,
$$\|\lL^2 T_ip\|^2=\S_j\|(i+j) T_ip_j\|^2\le C^2_i\S_j\f{(i+j)^2\|p_j\|^2}{j^2}\leq C^2_i(1+l)^2\|p\|^2.$$
Using the fact that $T\lL^2$ is bounded, for each $j$ and $p\in Q_j$, we have
$$\|T\lL^2p\|^2=j^2\|\S_{-l\le i\le l}T_ip\|^2=j^2\S_{-l\le i\le l}\|T_ip\|^2\ge j^2\|T_ip\|^2$$
for each $i$. It follows that each $\lL^2 T_i$ is bounded. Therefore $\lL^2 T=\S_{-l\le i\le l}\lL^2 T_i$ is bounded. This implies that $T^*\lL^2$ is bounded.
\end{proof}
Let $\CL$ denote the $*$-algebra generated by $T_p$ with polynomial symbol $p,$ and $\f1{\lL+t}$ together with all projections $P^\lb$, where $\lb\in\Nl^{r-1}_+$ is arbitrary. Define
$$\BL:=\{B\in\CL: B\lL^2\,\,bounded\},$$
$$\BL_\lL:=\AL(\lL+1)^{-1}+\BL=\{A(\lL+1)^{-1}+B:\,A\in\AL,B\in\BL\}.$$
It is easy to check that operators in $\CL$ have the finite propagation property. Therefore Lemma \er{fp} implies that $\BL$ and $\BL_\lL$ are invariant under taking adjoints.
\begin{lemma} $\BL$ is an ideal in $\CL.$ Moreover,
$$[\CL,(\lL+t)^{-1}]\ic\BL.$$
\end{lemma}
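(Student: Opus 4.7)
The plan is to prove the two assertions separately, exploiting that every generator of $\CL$ (the Toeplitz operators $T_p$, the resolvents $(\lL+t)^{-1}$, and the projections $P^\lb$) is a bounded operator, so every $C\in\CL$ is bounded and has some finite propagation $l=l_C$. For the ideal property, let $B\in\BL$ and $C\in\CL$: since $\CL$ is an algebra, $CB\in\CL$, and $CB\cdot\lL^2=C(B\lL^2)$ is a product of two bounded operators, so $CB\in\BL$. The product $BC$ is more delicate because $C$ and $\lL^2$ do not commute, but the excerpt has already noted (via Lemma \er{fp}) that $\BL$ is closed under taking adjoints, so $B^*\in\BL$; the left-multiplication case then gives $C^*B^*\in\BL$, and applying adjoint-closure once more yields $BC=(C^*B^*)^*\in\BL$.

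For the commutator inclusion $[\CL,(\lL+t)^{-1}]\ic\BL$, fix $C\in\CL$. That $[C,(\lL+t)^{-1}]\in\CL$ is automatic, so it remains to check that $[C,(\lL+t)^{-1}]\lL^2$ is bounded. The strategy is to exploit the joint diagonalization by the eigenspaces $Q_j$ of $\lL$: for $v\in Q_j$ with decomposition $Cv=\S_{|k-j|\le l}w_k$ where $w_k\in Q_k$, a direct computation using $\lL^2v=j^2v$ and $(\lL+t)^{-1}w_k=w_k/(k+t)$ gives
$$[C,(\lL+t)^{-1}]\lL^2 v=\S_k\f{j^2(k-j)}{(j+t)(k+t)}\,w_k.$$
When $j\ge 2l$, we have $k+t\ge j-l$ and hence each coefficient is bounded by $l\cdot j^2/(j(j-l))\le 2l$; since the $w_k$ lie in mutually orthogonal spaces $Q_k$, this yields the uniform bound $\|[C,(\lL+t)^{-1}]\lL^2 v\|\le 2l\,\|Cv\|\le 2l\,\|C\|\,\|v\|$. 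On the complementary subspace $\bigoplus_{j<2l}Q_j$ the operator $\lL^2$ is bounded by $4l^2$, and $[C,(\lL+t)^{-1}]$ is bounded as a commutator of bounded operators, so the composition is bounded there as well. Combining the two estimates yields boundedness of $[C,(\lL+t)^{-1}]\lL^2$ globally.

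The principal subtlety throughout is the non-commutativity of $\lL^2$ with general elements of $\CL$, which prevents one from simply moving factors past each other. For the ideal property this is resolved by the adjoint trick supplied by Lemma \er{fp}; for the commutator inclusion it is resolved by the finite-propagation constraint $|k-j|\le l$, which keeps the coefficient $j^2/((j+t)(k+t))$ uniformly bounded in $j$ and thus tames the otherwise-unbounded $\lL^2$.
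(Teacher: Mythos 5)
Your proof is correct, and it takes a somewhat different route from the paper's. The paper reduces both assertions to a single explicit identity involving the generator $T_u$: defining $R_u p = P_{m+1,\lb}T_u p$ for $p\in\PL_{m,\lb}(Z)$, it computes $[\lL^2,T_u]=(2\lL-1)R_u$ to establish that $BT_u\lL^2=B\lL^2T_u-B(2\lL-1)R_u$ is bounded (the other generators $P^\lb$ and $(\lL+s)^{-1}$ commute with $\lL$, so only $T_u$ needs treatment), and similarly computes $[T_u,(\lL+t)^{-1}]\lL^2 = R_u\f{\lL^2}{(\lL+t)(\lL+1+t)}$ for the commutator inclusion. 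You instead give a more uniform argument: the left-ideal inclusion $CB\in\BL$ is the trivial factorization $C(B\lL^2)$, the right-ideal inclusion $BC\in\BL$ is deduced from adjoint-closure of $\BL$ (Lemma \ref{fp}) via $BC=(C^*B^*)^*$, and the commutator estimate is obtained directly for an arbitrary $C\in\CL$ by bounding the coefficient $j^2(k-j)/((j+t)(k+t))$ with the propagation constraint $|k-j|\le l$. Both are valid; the paper's route is more explicit and localized to generators, while yours treats all of $\CL$ at once. The one place you should be slightly more careful is the passage from the per-$Q_j$ estimate to a global operator bound: the decomposition into the degree components $T_i=\oplus_j Q_{j+i}(\cdot)Q_j$, $|i|\le l$, together with the orthogonality of the $Q_j$, handles this exactly as in the proof of Lemma \ref{fp}, and you should also note that $t>0$ guarantees $k+t\ge j-l>0$ in the regime $j\ge 2l$; with those points spelled out the argument is complete.
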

\begin{proof} For the first assertion it suffices to show that $BT_u\in\BL$ whenever $B\in\BL.$ Define a bounded operator $R_u$ by $R_up=P_{m+1,\lb}T_up$ for $p\in\PL_{m,\lb}(Z).$ Then
$$(\lL^2T_u-T_u\lL^2)p=\S_{i=1}^r(\lL^2-m^2)P_{(m,\lb)+[i]}T_up=((m+1)^2-m^2)P_{m+1,\lb}T_u p=(2\lL-1)R_up.$$
Therefore $BT_u\lL^2=B\lL^2T_u-B(2\lL-1)R_u$ is bounded. Thus $BT_u\in\BL.$ For the second assertion it suffices to show that $[T_u,(\lL+t)^{-1}]\in\BL.$ With the previous notation, we have
\begin{eqnarray*}[T_u,(\lL+t)^{-1}]\lL^2 p&=&m^2(T_u(\lL+t)^{-1}-(\lL+t)^{-1}T_u)p=m^2\S_{i=1}^r\(\f1{m+t}-\f1{\lL+t}\)P_{(m,\lb)+[i]}T_up\\
&=&m^2\(\f1{m+t}-\f1{m+1+t}\)P_{m+1,\lb}T_up=R_u\f{\lL^2}{(\lL+t)(\lL+1+t)}  p.\end{eqnarray*}
Therefore $[T_u,(\lL+t)^{-1}]\lL^2$ is bounded.
\end{proof}
\begin{lemma}\label{ba} $\BL_\lL$ is a (non-unital) $*$-algebra and an $\AL$-bimodule, i.e.,
$$\AL\BL_\lL+\BL_\lL\AL\ic\BL_\lL$$.
\end{lemma}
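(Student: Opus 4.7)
The plan is to reduce every verification to the preceding lemma, which supplies both the ideal property $\BL\ic\CL$ and the commutator inclusion $[\CL,(\lL+t)^{-1}]\ic\BL$, together with Lemma \er{fp} (already invoked just above to establish that $\BL$ and $\BL_\lL$ are $*$-closed). Thus only multiplicative closure and the bimodule inclusion $\AL\BL_\lL+\BL_\lL\AL\ic\BL_\lL$ remain. Note in passing that $\AL\ic\CL$, since $P=P^0\in\CL$ and $T_p\in\CL$ imply $S_p=PT_pP\in\CL$, so that the ideal property genuinely applies to products involving elements of $\AL$.

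The key technical step is the following absorption principle: for every $B\in\BL$ and every $t$, both $B(\lL+t)^{-1}$ and $(\lL+t)^{-1}B$ lie in $\BL$. Indeed, $(\lL+t)^{-1}$ and $\lL^2$ are both diagonal in the $\lL$-eigenspace decomposition and therefore commute, so that
$$B(\lL+t)^{-1}\lL^2=B\lL^2\cdot(\lL+t)^{-1}$$
is a composition of two bounded operators; the analogous inclusion on the other side follows either by adjoints or by writing $(\lL+t)^{-1}B=B(\lL+t)^{-1}-[B,(\lL+t)^{-1}]$ and invoking the commutator inclusion.

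For the bimodule property, let $A\in\AL$ and $A_0(\lL+1)^{-1}+B_0\in\BL_\lL$. Left multiplication is immediate, since $AA_0\in\AL$ and $AB_0\in\BL$ by the ideal property. For right multiplication I would commute $(\lL+1)^{-1}$ past $A$, writing $A_0(\lL+1)^{-1}A=A_0A(\lL+1)^{-1}+A_0[(\lL+1)^{-1},A]$, so that the first summand belongs to $\AL(\lL+1)^{-1}$ and the second to $\AL\cdot\BL\ic\BL$.

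For multiplicative closure I would expand the product of two elements $A_i(\lL+1)^{-1}+B_i$ ($i=1,2$) into four pieces. Three of them (those involving at least one $B_i$) land in $\BL$ by the ideal property together with the absorption principle above. The remaining piece $A_1(\lL+1)^{-1}A_2(\lL+1)^{-1}$ is handled by writing $(\lL+1)^{-1}A_2=A_2(\lL+1)^{-1}+[(\lL+1)^{-1},A_2]$: the commutator contribution $A_1[(\lL+1)^{-1},A_2](\lL+1)^{-1}$ lies in $\BL$ by ideal plus absorption, while $A_1A_2(\lL+1)^{-2}$ lies in $\BL$ because multiplying by $\lL^2$ produces the bounded factor $\lL^2(\lL+1)^{-2}$. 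The main obstacle is really just bookkeeping — ensuring that every occurrence of $(\lL+1)^{-1}$ is either left in its $\AL(\lL+1)^{-1}$ slot or absorbed into $\BL$, with no leftover outside $\BL_\lL$.
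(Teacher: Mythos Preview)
Your proof is correct and follows essentially the same approach as the paper: both reduce everything to the ideal property $\BL\ic\CL$ and the commutator inclusion $[\CL,(\lL+t)^{-1}]\ic\BL$ from the preceding lemma. The paper is terser---it treats only the case $\BL_\lL\AL\ic\BL_\lL$, argues on the generator $S_u$, and computes $[(\lL+1)^{-1},S_u]=-S_u(\lL+1)^{-1}(\lL+2)^{-1}$ explicitly---whereas you work with general $A\in\AL\ic\CL$ and spell out the multiplicative closure as well, but the underlying mechanism is identical.
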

\begin{proof} We only show that $\BL_\lL\AL\ic\BL_\lL$. Indeed, for $A\in\AL, B\in\BL$, and $u\in Z$, we have
$$(A(\lL+1)^{-1}+B)S_u-AS_u(\lL+1)^{-1}=BS_u+A[(\lL+1)^{-1},S_u]=BS_u-AS_u(\lL+1)^{-1}(\lL+2)^{-1}\in\BL.$$
Since $AS_u\in\AL,$ it follows that $A(\lL+1)^{-1}+B\in\BL_\lL.$
\end{proof}
\begin{proposition}\label{q} For $\ll\in\Nl^r_+$ let $p,q\in\PL(Z)$ satisfy $\deg(p),\deg(q)\le|\ll'|.$ Then
$$PT_p^*P^{\ll'}T_q P\in\AL+\BL_\lL.$$
\end{proposition}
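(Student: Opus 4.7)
Write $X := PT_p^*P^{\ll'}T_qP$. My plan is to extract $X$ from the element $PT_p^*A^\ll T_qP\in\AL$ provided by Corollary \er{v}, using the $\SL_+$-expansion of Proposition \er{j} to compare $A^\ll$ with a scalar multiple of $P^{\ll'}$ modulo $\BL_\lL$.

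The first step pins $A^\ll T_qP$ to the range of $P^{\ll'}$. By the Pieri rule used in the proof of Lemma \er{z}, every partition $\lm$ that appears in the image of $T_qP$ satisfies $\lm'\le\nu\le\lm$ for some $\nu$ with $|\nu|\le\deg q\le|\ll'|$, so $|\lm'|\le|\ll'|$. On the other hand, by \er{32} the operator $A^\ll$ is supported on $\{\lm:\lm'\le\ll\le\lm\}$, and $\ll\le\lm$ gives $|\ll'|\le|\lm'|$. The only surviving partitions therefore have $\lm'=\ll'$ (and necessarily $\lm_1\ge\ll_1$), which shows $A^\ll T_qP=A^\ll P^{\ll'}T_qP$. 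On each $P_{m,\ll'}$ with $m\ge\ll_1$ the operator $A^\ll$ acts as a scalar $c_m=c_0+c_1/(m+1)+O(1/m^2)$ with $c_0>0$ by Proposition \er{j}, while it vanishes on the finitely many $P_{m,\ll'}$ with $\ll'_1\le m<\ll_1$. Collecting the $O(1/m^2)$ tail and the finite-rank discrepancy into a single operator $B'\in\BL$ yields
$$A^\ll P^{\ll'}=c_0 P^{\ll'}+c_1(\lL+1)^{-1}P^{\ll'}+B'.$$

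Substituting this identity into $PT_p^*A^\ll T_qP$ and commuting $(\lL+1)^{-1}$ through $P^{\ll'}$ (which commute, as both are diagonal) and past $T_q$ (using $[(\lL+1)^{-1},T_q]\in\BL$ together with the ideal property of $\BL$ in $\CL$) converts Corollary \er{v} into
$$c_0 X+c_1 X(\lL+1)^{-1}\in\AL+\BL.$$
Writing the right-hand side as $\alpha+\beta$ with $\alpha\in\AL$ and $\beta\in\BL$, and iterating the identity $X=c_0^{-1}(\alpha+\beta)-(c_1/c_0)X(\lL+1)^{-1}$ once, gives
$$X=\frac{\alpha}{c_0}+\frac{\beta}{c_0}-\frac{c_1\alpha}{c_0^2}(\lL+1)^{-1}-\frac{c_1\beta}{c_0^2}(\lL+1)^{-1}+\frac{c_1^2}{c_0^2}X(\lL+1)^{-2}.$$
The five summands lie respectively in $\AL$, $\BL$, $\AL(\lL+1)^{-1}$, $\BL$, and $\BL$: for the last one, $X\in\CL$ as a product of generators of $\CL$, and $X(\lL+1)^{-2}\lL^2$ is bounded because $\lL^2/(\lL+1)^2\le 1$ as a diagonal operator. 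Hence $X\in\AL+\BL_\lL$.

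The delicate point is the alignment in the first step: the $\SL_+$-asymptotics of Proposition \er{j} must split precisely into a $c_0 P^{\ll'}$ piece (which Corollary \er{v} will promote to the $\AL$-summand of $\BL_\lL$) and a $c_1(\lL+1)^{-1}P^{\ll'}$ piece (accounting for the $\AL(\lL+1)^{-1}$-summand), with the genuinely $O(1/m^2)$ remainder landing in $\BL$. Once this alignment is secured, the inversion terminates after a single Neumann iteration because $\lL^2/(\lL+1)^2$ is bounded, so no higher-order correction is needed.
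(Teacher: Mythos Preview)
Your proof is correct and follows the same architecture as the paper's: pin $A^\ll T_qP$ to the range of $P^{\ll'}$ via the Pieri rule and \er{32}, use Proposition \ref{j} to relate $A^\ll P^{\ll'}$ to $\frac{c_0\lL+(c_0+c_1)}{\lL+1}P^{\ll'}$ modulo $\BL$, then combine with Corollary \ref{v} and the commutator/ideal properties. The only cosmetic difference is the inversion step: the paper writes $P^{\ll'}=P^{\ll'}A^\ll P^{\ll'}\,\frac{\lL+1}{c_\ll\lL+\t c_\ll}+B'$ directly (exploiting that $\frac{\lL+1}{c_\ll\lL+\t c_\ll}$ is bounded since $c_\ll>0$) and substitutes into $X$, whereas you derive $c_0X+c_1X(\lL+1)^{-1}\in\AL+\BL$ and solve by a single Neumann iteration using $X(\lL+1)^{-2}\in\BL$ --- an equivalent manoeuvre.
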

\begin{proof} The $K$-invariant operator $P^{\ll'}A^\ll P^{\ll'}$ is diagonal, and Proposition \er{j} implies that
$$P^{\ll'}A^\ll P^{\ll'}=\f{c_\ll\lL+\t c_\ll}{\lL+1}P^{\ll'}+B,$$
where $B\in\BL$ and $c_\ll>0.$ It follows that
$$P^{\ll'}=P^{\ll'}A^\ll P^{\ll'}\f{\lL+1}{c_\ll\lL+\t c_\ll}+B'$$
with $B'\in\BL.$ If $\lb\in\Nl^{r-1}_+$ satisfies $|\lb|\le|\ll'|$ and $A^\ll P^\lb$ is non-zero, then $\lb\ge\ll'$ by Lemma \er{z}. This is only possible if $\lb=\ll'.$ Therefore
$$PT_p^*A^\ll T_qP=\S_{|\lb|\le|\ll'|}PT_p^*P^\lb A^\ll P^\lb T_qP=PT_p^*P^{\ll'}A^\ll P^{\ll'}T_qP.$$
Since $\BL$ is an ideal in $\CL$ and $\[\f{\lL+1}{c_\ll\lL+\t c_\ll},T_qP\]\in\BL$ we obtain
\begin{eqnarray*}PT_p^*P^{\ll'}T_qP&=&PT_p^*P^{\ll'}A^\ll P^{\ll'}\f{\lL+1}{c_\ll\lL+\t c_\ll}T_qP+PT_p^*B'T_qP\\
&=&PT_p^*P^{\ll'}A^\ll P^{\ll'}T_qP\f{\lL+1}{c_\ll\lL+\t c_\ll}+PT_p^*P^{\ll'}A^\ll P^{\ll'}\[\f{\lL+1}{c_\ll\lL+\t c_\ll},T_qP\]
+PT_p^*B'T_qP\\
&=&PT_p^*A^\ll T_qP\f{\lL+1}{c_\ll\lL+\t c_\ll}+B'',\end{eqnarray*}
where $B''\in\BL.$ Since $PT_p^*A^\ll T_qP\in\AL$ by Corollary \er{v}, the assertion follows.
\end{proof}
\begin{proposition}\label{aab}  $[\AL,\AL]\ic\BL_\lL.$
\end{proposition}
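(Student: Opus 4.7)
Since $\AL$ is the $*$-algebra generated by the sub-Toeplitz operators $\{S_p:p\in\PL(Z)\}$ and $S_pS_q=S_{pq}$, every element of $\AL$ is a linear combination of alternating words $S_{p_1}S_{q_1}^*S_{p_2}S_{q_2}^*\cdots$. Using $[S_p,S_q]=0=[S_p^*,S_q^*]$, the Leibniz rule, and the bimodule property of $\BL_\lL$ from Lemma \er{ba}, the general claim reduces to $[S_p^*,S_q]\in\BL_\lL$ for polynomial symbols. Factoring each polynomial into linear factors and applying Leibniz once more further reduces matters to the linear base case $[S_u^*,S_v]\in\BL_\lL$ for $u,v\in Z$.

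For this base case, I first note that formula \er{16} applied to $\ll=(m,0,\ldots,0)$ admits only the index $i=1$, so $T_v^*$ preserves $H^2_1$; consequently $S_vS_u^*=PT_vT_u^*P$ with no boundary correction. Writing $I=P+P^\perp$ via $P^\perp=\sum_{\lb\in\Nl^{r-1}_+\setminus\{0\}}P^\lb$ and observing that for a linear symbol only $\lb=[1]$ contributes (by the Pieri rule, $(z|v)\cdot\PL_{m,0,\ldots,0}\subset\PL_{m+1,0,\ldots,0}\oplus\PL_{m,1,0,\ldots,0}$), we obtain
$$[S_u^*,S_v]=P[T_u^*,T_v]P-PT_u^*P^{[1]}T_vP.$$
Applying Proposition \er{q} with $\ll=(1,1,0,\ldots,0)$, so that $|\ll'|=1\ge\deg u,\deg v$, and tracing through its proof yields
$$PT_u^*P^{[1]}T_vP\equiv\f{1}{c_\ll}\,PT_u^*A^\ll T_vP\pmod{\BL_\lL},$$
with $PT_u^*A^\ll T_vP\in\AL$ by Corollary \er{v}.

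The remaining and principal obstacle is to verify
$$P[T_u^*,T_v]P-\f{1}{c_\ll}\,PT_u^*A^\ll T_vP\in\BL_\lL.$$
I would approach this by evaluating both operators on the highest weight basis $\{N_{m,\lg}\}_{m,\lg}$ and comparing asymptotics in $m$. Formula \er{16} produces $P[T_u^*,T_v]PN_{m,\lg}$ as an explicit combination of Pochhammer ratios depending on the Peirce components of $u$ and $v$, while Lemma \er{i} together with the recursive eigenvalue computation in the proof of Proposition \er{j} yields a parallel expression for $PT_u^*A^\ll T_vPN_{m,\lg}$. Both produce scalar sequences in $m$ whose leading behaviour lies in the class $\SL_+$; matching the constant term $c_0$ and the $c_1/(m+1)$-correction, with Lemma \er{qn} controlling the Pochhammer ratios, forces the difference to decay as $O(1/m^2)$ on each $\PL_{m,\lg}$-block, placing it in $\BL$, while the $c_1/(m+1)$ remainder is absorbed by $\AL(\lL+1)^{-1}\subset\BL_\lL$. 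This delicate asymptotic matching for arbitrary $u,v\in Z$, together with its extension to the full $K$-orbit of $N_{m,\lg}$ via the equivariance $kS_uk^{-1}=S_{ku}$, is the computational crux of the proof.
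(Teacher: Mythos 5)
Your reduction to the linear base case $[S_u^*,S_v]\in\BL_\lL$ matches the paper's first step, and your decomposition
$$[S_u^*,S_v]=P[T_u^*,T_v]P-PT_u^*P^{[1]}T_vP$$
(via the Pieri rule and the fact that $T_u^*$ preserves $H^2_1(S)$) is correct, as is the observation that $PT_u^*P^{[1]}T_vP\in\AL+\BL_\lL$ by Proposition \er{q}. However, you explicitly stop short of the decisive step: you defer the verification that $P[T_u^*,T_v]P-\f1{c_\ll}PT_u^*A^\ll T_vP\in\BL_\lL$ to an unperformed ``asymptotic matching'' of Pochhammer ratios, which you yourself call the ``computational crux.'' Since this is precisely what has to be proved, the proposal has a genuine gap rather than a proof. (There is also an indexing slip: $P[T_u^*,T_v]P$ and $PT_u^*A^\ll T_vP$ act only on $H^2_1(S)=\S_m\PL_m(Z)$, so one can only evaluate on $N_1^m$, not on $N_{m,\lg}$ for nontrivial $\lg$; moreover, these operators are not $K$-invariant, so highest-weight evaluation alone does not control the full block, and the equivariance $kS_uk^{-1}=S_{ku}$ changes the operator rather than commuting with it.)

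The paper avoids this asymptotic comparison entirely with a short algebraic recursion. Writing $T_u^*$ in terms of the directional derivative $u^\dl$ via \er{16}, and using the Leibniz rule $u^\dl\((z|v)p\)=(u|v)p+(z|v)u^\dl p,$ one obtains on $\PL_m(Z)$ the operator identity
$$[S_u^*,S_v](\lL+\lr)=(u|v)P-S_vS_u^*-Pu^\dl P^1T_vP.$$
The last term equals a constant times $PT_u^*P^1T_vP$, handled by Proposition \er{q} exactly as in your approach. Since the whole right side lies in $\AL+\BL_\lL$, and multiplication on the right by $(\lL+\lr)^{-1}$ sends $\AL$ into $\BL_\lL$ and $\BL_\lL$ into $\BL$, the commutator lands in $\BL_\lL$ with no asymptotic bookkeeping. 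The $(\lL+1)^{-1}$-damping is thus \emph{produced} algebraically, rather than hunted for by comparing coefficient sequences. If you want to salvage your route, you would have to supply the full Pochhammer computation on $\PL_m(Z)$ including the off-highest-weight entries; it is far simpler to extract the $(\lL+\lr)^{-1}$ factor directly as above.
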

\begin{proof} In view of Lemma \er{ba} it suffices to show that $[S_u^*,S_v]\in\BL_\lL.$ We may suppose that $Z$ has rank $r>1.$ By definition,
$S_v=PT_vP=T_vP-P^1T_vP.$  Note $S_v\PL_{m,0}(Z)\ic\PL_{m+1,0}(Z)$ and $\f a2(r-1)+b=\lr-1.$ Applying \er{16} it follows that
\begin{eqnarray*}(m+\lr)S_u^*S_vP_m=(m+\lr)T_u^*S_vP_m&=&u^\dl(S_vP_m)
=u^\dl(T_vP_m-P^1 T_vP_m)\\&=&(u|v)P_m+T_v u^\dl P_m-u^\dl P^1 T_vP_m\\
&=&(u|v)P_m+(m+\lr-1)T_v S_u^*P_m-u^\dl P^1 T_vP_m\\&=&(u|v)P_m+(m+\lr-1)S_vS_u^*P_m-Pu^\dl P^1 T_v P_m.\end{eqnarray*}
Thus $S_u^*S_v(\lL+\lr)=(u|v)P+S_vS_u^*(\lL+\lr-1)-Pu^\dl P^1 T_v P$ and hence
$$[S_u^*,S_v](\lL+\lr)=(u|v)P+S_vS_u^*((\lL+\lr-1)-(\lL+\lr))-Pu^\dl P^1 T_vP=(u|v)P-S_vS_u^*-Pu^\dl P^1 T_vP.$$
By \er{16} we have
$$Pu^\dl P^1T_vP=\S_m Pu^\dl P^1T_vP_m=\S_m P_mu^\dl P_{m,1}T_vP_m=(1+\f a2(r-2)+b)\S_m P_mT_u^*P_{m,1}T_vP_m$$
$$=(1+\f a2(r-2)+b)\S_m PT_u^*P^1T_vP_m=(1+\f a2(r-2)+b)PT_u^*P^1T_vP.$$
Thus Proposition \er{q} implies that $Pu^\dl P^1T_vP\in\AL+\BL_\lL,$ and the assertion follows.
\end{proof}
\begin{lemma}\label{y} $\AL\ic\left\{\S_i S_{p_i}S_{q_i}^*+B:\,p_i,q_i\in\PL(Z),\,B\in\BL_\lL\right\}.$
\end{lemma}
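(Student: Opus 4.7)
The plan is to expand any element of $\AL$ as a linear combination of words in the generators $S_u, S_v^*$ with $u,v\in Z$, and then rearrange each such word into the standard form $S_p S_q^*$ modulo $\BL_\lL$. The two algebraic inputs I need are the multiplicativity $S_pS_q = S_{pq}$ established in Section 1, and Proposition \ref{aab}, which gives $[\AL,\AL]\subset\BL_\lL$; the $\AL$-bimodule property of $\BL_\lL$ from Lemma \ref{ba} will let me absorb the error terms produced during the rearrangement.

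I would proceed by induction on the length $n$ of a word $M = G_1 G_2 \cdots G_n$, where each $G_i$ is either some $S_{u_i}$ or some $S_{v_i}^*$. The base cases $n\le 1$ are handled by writing $S_u = S_u\,S_1^*$ and $S_v^* = S_1\,S_v^*$, where the constant polynomial $p=1$ gives $S_1 = P$, the identity on $H^2_1(S)$. For the inductive step I argue by dichotomy. If $M$ already has all unstarred factors preceding all starred ones, say $M = S_{u_1}\cdots S_{u_k}S_{v_1}^*\cdots S_{v_{n-k}}^*$, then applying $S_pS_q = S_{pq}$ to the first block and its adjoint form $S_{v_1}^*\cdots S_{v_{n-k}}^* = (S_{v_{n-k}\cdots v_1})^* = S_{v_1\cdots v_{n-k}}^*$ (valid since polynomials commute) collapses $M$ into a single term $S_pS_q^*$. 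Otherwise $M$ contains some adjacent pair $S_v^*S_u$, and I would replace it by $S_uS_v^*$ at the cost of an error of the form $M'\cdot[S_v^*,S_u]\cdot M''$ with $M',M''\in\AL$; Proposition \ref{aab} puts the commutator in $\BL_\lL$, and Lemma \ref{ba} keeps the surrounded expression in $\BL_\lL$. Finitely many such bubble-sort swaps bring $M$ into the sorted case handled above.

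Summing over monomials and using that $\BL_\lL$ is closed under finite sums (being, in particular, a linear subspace as an $\AL$-bimodule) yields the claimed inclusion for an arbitrary element of $\AL$. There is no real obstacle here; the one point to monitor is that every swap produces an error that remains in $\BL_\lL$ after multiplication by the surrounding factors of the word, and this is precisely the content of Lemma \ref{ba}.
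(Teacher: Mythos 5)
Your proof is correct and uses the same key ingredients as the paper — multiplicativity $S_pS_q=S_{pq}$, Proposition \ref{aab} for the commutators, and Lemma \ref{ba} for absorbing error terms — just phrased as an explicit bubble-sort on words instead of the paper's terser "the set contains the generators and is invariant under multiplication by $S_u,S_v^*$." The two arguments are essentially identical in substance.
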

\begin{proof} Since the latter set contains $S_u,S_v^*,$ it suffices to show that it is invariant under multiplication by $S_u,S_v^*$. By Proposition  \er{aab} we have $[S_u^*,S_p]\in\BL_\lL$ and $[S_q^*,S_v]\in\BL_\lL.$ With Lemma \er{ba}, the assertion follows.
\end{proof}
The following technical lemma will be used in the next section.
\begin{lemma}\label{ap} Let $T\in\AL+\BL_\lL.$ Then $\left\{\f{(N_1^m|TN_1^m)_S}{\|N_1^m\|_S^2}\right\}_m\in\SL.$
\end{lemma}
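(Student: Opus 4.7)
The strategy is to reduce $T$, via Lemma~\er{y} together with $\BL_\lL=\AL(\lL+1)^{-1}+\BL$, to the canonical form
$$T\ =\ \S_i S_{p_i}S_{q_i}^*\ +\ A(\lL+1)^{-1}\ +\ B,\qquad A\in\AL,\ B\in\BL,$$
and to verify membership in $\SL$ for each of the three summand types separately; closure of $\SL$ under finite sums then gives the claim.

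The main calculation concerns $S_pS_q^*$. Since $e_1$ is a minimal tripotent, the Peirce $2$-space $Z^2_{e_1}$ is one-dimensional and $N_1(z)=(z|e_1)$ is linear, so $u^\dl N_1^n=n(u|e_1)N_1^{n-1}$; moreover, for $\ll=(n,0,\ldots,0)$ only the index $i=1$ survives in \er{16}, giving $T_u^*N_1^n=\f{n(u|e_1)}{n+\lr-1}N_1^{n-1}$. Iterating for $q$ homogeneous of degree $k$, expanded as a linear combination of products of $k$ linear forms, yields
$$S_q^*N_1^m\ =\ P\,T_q^*N_1^m\ =\ \f{(m)_k^*}{(m+\lr-1)_k^*}\,\o{q(e_1)}\,N_1^{m-k}\qquad(m\ge k),$$
with the analogous formula for $S_p^*N_1^m$; for arbitrary (not necessarily homogeneous) $p,q$ one decomposes into homogeneous components. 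Pairing via $(N_1^m|S_pS_q^*N_1^m)_S=(S_p^*N_1^m|S_q^*N_1^m)_S$, orthogonality of distinct weight spaces $\PL_n(Z)$ kills the cross-degree contributions, and in the non-trivial case $\deg p=\deg q=k$ one obtains
$$\f{(N_1^m|S_pS_q^*N_1^m)_S}{\|N_1^m\|_S^2}\ =\ p(e_1)\,\o{q(e_1)}\[\f{(m)_k^*}{(m+\lr-1)_k^*}\]^{\!2}\f{\|N_1^{m-k}\|_S^2}{\|N_1^m\|_S^2}.$$
The rational factor lies in $\SL_+$ by direct expansion of $(m-j)/(m+\lr-1-j)$, and the norm ratio lies in $\SL_+$ by Lemma~\er{qn} applied with $\la=\lg=0$; hence the product is in $\SL$.

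For $B\in\BL$, boundedness of $B\lL^2$ combined with $\lL^2N_1^m=m^2N_1^m$ gives $\|BN_1^m\|_S\le\|B\lL^2\|\,m^{-2}\,\|N_1^m\|_S$, so Cauchy--Schwarz makes the corresponding ratio $O(1/m^2)$, which is automatically in $\SL$. For $A(\lL+1)^{-1}$ I apply Lemma~\er{y} a second time, writing $A=\S_j S_{p_j'}S_{q_j'}^*+A'(\lL+1)^{-1}+B'$ with $A'\in\AL$, $B'\in\BL$, and multiplying through:
$$A(\lL+1)^{-1}\ =\ \S_j S_{p_j'}S_{q_j'}^*(\lL+1)^{-1}+A'(\lL+1)^{-2}+B'(\lL+1)^{-1}.$$
The three contributions to the ratio are of the form $\f1{m+1}\cdot(\text{element of }\SL)\in\SL$; $\f1{(m+1)^2}\cdot(\text{bounded sequence})\in\SL$, because the quadratic damping absorbs mere operator boundedness; and $\f1{m+1}\cdot O(1/m^2)=O(1/m^3)$, in $\SL$. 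Crucially, no further recursion is needed, since after two factors of $(\lL+1)^{-1}$ boundedness of the remaining $\AL$-coefficient already suffices.

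The principal obstacle is the identity $S_q^*N_1^m\in\Cl\cdot N_1^{m-k}$: although the weight space $\PL_{m-k}(Z)$ is multi-dimensional in general, the action of $T_q^*$ on the highest-weight vector $N_1^m$ never leaves the one-dimensional ray spanned by $N_1^{m-k}$, thanks to the linearity of $N_1$ (from $e_1$ being minimal) and the fact that $(m,0,\ldots,0)$ admits only one removable box. Once this rank-$1$ propagation is established, the remainder is routine bookkeeping with Lemmas~\er{y} and~\er{qn}.
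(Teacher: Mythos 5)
Your proof is correct and follows essentially the same route as the paper: reduce $T$ to the canonical form $\S_i S_{p_i}S_{q_i}^* + \S_j S_{\lf_j}S_{\lq_j}^*(\lL+1)^{-1} + B$ via a double application of Lemma~\ref{y}, compute $S_q^*N_1^m$ explicitly as a scalar multiple of $N_1^{m-k}$, and invoke Lemma~\ref{qn} for the norm ratios and the definition of $\BL$ for the remainder. The only difference is cosmetic: you derive the formula for $S_q^*N_1^m$ directly from \eqref{16} and make the constant $p(e_1)\o{q(e_1)}$ explicit, whereas the paper cites Lemma~\ref{i} and leaves the constants $c_k(p,q)$ unnamed.
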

\begin{proof} By Lemma \er{i}, we have
$$S_{N_1^k}^*N_1^m=\f{(m+1)_k^*}{(m+\lr)_k^*}N_1^{m-k}$$
for $0\le k\le m,$ $S_{N_1^k}^*N_1^m=0$ for $k>m$ and $S_v^*N_1^m=0$ for all $v\in Z_1^\perp.$ Thus for any $p,q\in\PL(Z)$ there exist constants $c_k(p,q),$ for $0\le k\le M(p,q):=min(\deg p,\deg q),$ such that
$$(S_p^*N_1^m|S_q^*N_1^m)_S=\S_{k=0}^{M(p,q)}c_k(p,q)\|S_{N_1^k}^*N_1^m\|_S^2=\S_{k=0}^{M(p,q)}c_k(p,q)\|\f{(m+1)_k^*}{(m+\lr)_k^*}N_1^{m-k}\|_S^2$$
for all $m\ge M(p,q).$ Since $T\in\AL+\BL_\lL$, Lemma \er{y} implies that $$T=\S_i S_{p_i}S_{q_i}^*+B_0 $$
 for some  polynomials
$p_i,q_i$ and $B_0=A_1(\lL+1)^{-1}+B_1\in\BL_\lL$ with $A_1\in\AL, B_1\in\BL $. Using  Lemma \er{y} for $A_1$ again, there exist polynomials
$\lf_j, \lq_j$ and $ B_2\in\BL_\lL$ such that
$$T=\S_i S_{p_i}S_{q_i}^*+\(\S_j S_{\lf_j}S_{\lq_j}^*+B_2\)(\lL+1)^{-1}+B_1
=\S_i S_{p_i}S_{q_i}^*+\S_j S_{\lf_j}S_{\lq_j}^*(\lL+1)^{-1}+B,$$
where $B\in\BL.$ It follows that
$$(N_1^m|TN_1^m)_S-(N_1^m|BN_1^m)_S=\S_i(S_{p_i}^*N_1^m|S_{q_i}^*N_1^m)_S+\f1{m+1}\S_j(S_{\lf_j}^*N_1^m|S_{\lq_j}^*N_1^m)_S$$
$$=\S_i\S_{k=0}^{M(p_i,q_i)}c_k(p_i,q_i)\,\|\f{(m+1)_k^*}{(m+\lr)_k^*}N_1^{m-k}\|_S^2+\f1{m+1}\S_j\S_{k=0}^{M(\lf_j,\lq_j)}c_k(\lf_j,\lq_j)\,\|\f{(m+1)_k^*}{(m+\lr)_k^*}N_1^{m-k}\|_S^2.$$
Since
$$BN_1^m=(B(\lL+1)^2)(\lL+1)^{-2}N_1^m=\f{B(\lL+1)^2 N_1^m}{(m+1)^2}$$
the sequence $\{m^2\f{(N_1^m|BN_1^m)_S}{\|N_1^m\|_S^2}\}$ is bounded. Thus there exist finitely many sequences $\{c_k(m)\}\in\SL$  such that
$$\f{(N_1^m|TN_1^m)_S}{\|N_1^m\|_S^2}=\S_{k} c_k(m)\f{\|N_1^{m-k}\|_S^2}{\|N_1^m\|_S^2}.$$
This yields the desired result since $\left\{\f{\|N_1^{m-k}\|_S^2}{\|N_1^m\|_S^2}\right\}_m\in\SL_+$ by Lemma \er{qn}.
\end{proof}

\section{First main theorem}
\begin{theorem}\label{mr} Let $f\in\PL(Z\xx\o Z)$ be a real-analytic polynomial. Then $S_f\in\AL+\BL_\lL.$
\end{theorem}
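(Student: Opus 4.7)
The plan is to reduce via bilinearity to the case $f=p\bar q$ with $p,q\in\PL(Z)$ polynomials. Since $p$ is holomorphic, the Toeplitz-calculus identity $T_{\bar q}T_p=T_{p\bar q}$ applies, giving
$$S_f=PT_q^*T_pP=S_q^*S_p+PT_q^*P^\perp T_pP,$$
where $P^\perp=I-P$; the first summand lies in $\AL$, so it remains to show that $PT_q^*P^\perp T_pP\in\AL+\BL_\lL$. Writing $P^\perp=\S_{\lb\ne 0}P^\lb$ converts this into a finite sum $\S_{\lb\ne 0}PT_q^*P^\lb T_pP$, finiteness following from the Pieri-type range analysis used in Lemma \er{z}: nonzero contributions force $|\lb|\le\min(\deg p,\deg q)$.

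For each $\lb$ with $|\lb|\ge\max(\deg p,\deg q)$, pick any $\ll\in\Nl^r_+$ with $\ll'=\lb$ and apply Proposition \er{q} to place the term in $\AL+\BL_\lL$. For the $\lb$ with smaller $|\lb|$, I would decompose $p=\S_\la p_\la$ and $q=\S_\lm q_\lm$ into Peter-Weyl components and treat each pair $(\la,\lm)$ individually: whenever $|\la|\le|\lm'|$ (resp.\ $|\lm|\le|\la'|$), the adjoint of Theorem \er{r} identifies $PT_{q_\lm}^*T_{p_\la}P$ with a sub-Toeplitz operator of polynomial symbol and therefore places it in $\AL$, using $\AL^*=\AL$.

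The chief obstacle is the residual 'tall-thin' pairs $(\la,\lm)$ with simultaneously $|\la|>|\lm'|$ and $|\lm|>|\la'|$. For these, the Pieri rule forces $|\lb|\le\min(|\la|,|\lm|)$ on every non-trivial $P^\lb$-contribution, so Proposition \er{q}'s degree hypothesis is genuinely violated. My approach would be to exploit the factorization $N_\la=N_1^{\la_1-\la_2}\,N_2^{\la_2-\la_3}\cdots N_r^{\la_r}$, in which the rightmost cluster has degree $|\la'|$, pulling the high-degree $N_1^{\la_1-\la_2}$-factor out past the projections $P^\lb$, $P$, and $(\lL+t)^{-1}$ using the commutator bounds $[T_{N_1^k},P^\lb]\in\BL$ and $[T_{N_1^k},(\lL+t)^{-1}]\in\BL$ from the lemmas preceding Lemma \er{ba}. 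The commutator errors lie in $\BL_\lL$ by Lemma \er{ba}, while the reduced low-degree residual falls within the scope of Proposition \er{q}. Finally, Proposition \er{aab} together with Lemma \er{y} handle any leftover $[\AL,\AL]$-rearrangements, placing the full operator in $\AL+\BL_\lL$.
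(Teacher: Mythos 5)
Your opening decomposition $S_f=S_q^*S_p+PT_q^*P^\perp T_pP$ and the use of Lemma~\er{z} to cut the sum over $\lb$ to finitely many terms is a reasonable start, and the cases handled by Theorem~\er{r} and its adjoint are fine. But the treatment of the remaining ``tall-thin'' pairs $(\la,\lm)$ has a genuine gap. Your reduction hinges on the claim that $[T_{N_1^k},P^\lb]\in\BL$, which you attribute to ``the lemmas preceding Lemma~\er{ba}.'' What is actually proved there is only that $\BL$ is an ideal in $\CL$ and that $[\CL,(\lL+t)^{-1}]\ic\BL$; nothing is asserted about commutators with the projections $P^\lb$. These are very different statements: $(\lL+t)^{-1}$ is a function of the grading operator, whereas $P^\lb$ is a rank-selecting projection onto a single ``diagonal'' in the partition lattice, and $[T_u,P^\lb]$ involves a full Toeplitz operator restricted between adjacent diagonals --- there is no reason for it to lie in $\BL$ (it does not become small after multiplying by $\lL^2$). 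Also, as a side remark, pulling out $N_1^{\la_1-\la_2}$ from $N_\la$ leaves a factor of degree $\la_2+|\la'|$, not $|\la'|$, so even the degree bookkeeping in the sketch is off; and in any case a generic $p_\la\in\PL_\la(Z)$ does not factor as a conical polynomial, so the factorization of the highest-weight vector does not give the reduction you want.

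The mechanism the paper actually uses to get around Proposition~\er{q}'s degree restriction is different and is the real content of the proof. It is a double induction on the degrees $(\el,k)=(\deg p,\deg q)$, and the crucial lemma under the induction hypothesis is Claim~\er{m}: the projections $P^\lb$ with $|\lb|<k$ can be expressed, modulo $\BL$, as $\f{\lL+1}{\lL}$-type rational combinations of the operators $A^\la$ with $\la\le\lb$. This inversion relies on Claim~\er{p} (which identifies the asymptotics of the diagonal entries of $A^\ll$) together with a positivity result from \cite[Theorem~1.6]{U2} guaranteeing that the leading coefficient $a^\lb_\lb$ is nonzero. Once $P^\lb$ is replaced by a combination of $A^\la$'s, the operator $PT_p^*A^\la T_qP$ factors through a low-rank channel and lands in $\AL_{k,h}\AL_{h,k}$ with $h=|\lb|<k$, so the induction hypothesis applies. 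This triangular inversion of the map $\ll\mapsto A^\ll$ is precisely what your proposal is missing: without it, there is no way to apply Proposition~\er{q} to the residual terms, and the commutator shortcut you propose is not available.
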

The proof is based on a lengthy induction argument. We may assume that $f=\o p\,q$ for some $p,q\in\PL(Z).$
Let $\AL_{i,j}$ denote the set of all operators $PT_p^*T_qP,$ where $\deg p\le i,\,\deg q\le j.$ For a given $k$ we consider the following assumption
\be{20}\AL_{i,j}\ic\AL+\BL_\lL\mbox{ whenever }min(i,j)<k.\ee
We now proceed via a sequence of 'claims' which are proved under this assumption.
\begin{claim}\label{p} The assumption \er{20} implies that for each partition $\ll$ with $|\ll|<k$ there exist constants $a^\ll_\lb,b^\ll_\lb$ such that
\be{23}A^\ll-\S_{\lb\le\ll}\f{a^\ll_\lb\lL+b^\ll_\lb}{\lL+1}P^\lb\in\BL.\ee
\end{claim}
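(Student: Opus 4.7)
The strategy is to exploit the diagonal structure of $A^\ll$ encoded in (\er{32}):
$$A^\ll = \S_{\lb:\,\ll'\le\lb\le\ll}\,\S_{m\ge\ll_1}\,a_{m,\lb}\,P_{m,\lb},\qquad a_{m,\lb}:=\f{(N_{m,\lb}\,|\,A^\ll N_{m,\lb})_S}{\|N_{m,\lb}\|_S^2}.$$
The candidate correction $\S_\lb\f{a^\ll_\lb\lL+b^\ll_\lb}{\lL+1}P^\lb$ is also diagonal, acting on $\PL_{m,\lb}(Z)$ as $a^\ll_\lb+(b^\ll_\lb-a^\ll_\lb)/(m+1)$. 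Since $\lL$ acts as $m$ on each such block, the difference will lie in $\BL$ precisely when, for every admissible $\lb$, $\{a_{m,\lb}\}_m\in\SL$ and the constants are chosen to reproduce its first two $1/m$-terms (with $a^\ll_\lb=b^\ll_\lb=0$ for non-admissible $\lb$, matching the vanishing of $A^\ll P^\lb$ there). Membership of the difference in $\CL$ is automatic, since $A^\ll$ is built from $T_{p_i}\in\CL$ and $P=P^0\in\CL$, both of which lie in the $*$-algebra $\CL$.

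It thus suffices to show $\{a_{m,\lb}\}_m\in\SL$ for every $\lb$ with $\ll'\le\lb\le\ll$. The case $\lb=\ll'$ is Proposition \er{j}, which yields the stronger $\SL_+$. For the remaining $\lb$, I would induct on the length $\el$ of $\ll$, adapting Proposition \er{j}'s reduction scheme: with $k=\ll_\el$, $\lg=\ll-k_\el$ of length $<\el$, and the basis $p_i=c\,N_\el^k q_i$ for $i\in\t I$ (where $q_i$ is orthonormal in $\PL_\lg(\t Z)$ and $\t Z=Z^2_{e_1+\ldots+e_\el}$), Lemma \er{i} reduces $T_{N_\el^k}^*N_{m,\lb}$ to a scalar multiple $C_{m,\lb}\,N_{m-k,\t\lb}$ with $\{C_{m,\lb}\}_m\in\SL_+$. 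When $\lb_\el=0$, so that $N_{m,\lb}\in\PL(\t Z)$, the terms $i\in I\sm\t I$ vanish exactly as in Proposition \er{j}'s proof (the $p_i$ with $i\in I\sm\t I$ lie in the ideal generated by $\t Z^\perp$), and one obtains
$$a^\ll_{m,\lb}\;=\;c^2\,C_{m,\lb}^2\,\f{\|N_{m-k,\t\lb}\|_S^2}{\|N_{m,\lb}\|_S^2}\,a^\lg_{m-k,\t\lb}\;\in\;\SL$$
from the inductive hypothesis, Lemma \er{qn}, and the closure of $\SL$ under products.

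The main obstacle is the complementary subcase $\lb_\el\ge 1$: $N_{m,\lb}$ then carries a factor $N_{\el+1}^{\lb_\el}$ outside $\PL(\t Z)$, so the $p_i$ with $i\in I\sm\t I$ need not annihilate it, and moreover $(m,\lb)-k_\el$ may fail to be a partition, making Lemma \er{i} inapplicable at level $\el$ directly. My plan is to overcome these obstructions by choosing the basis of $\PL_\ll(Z)$ aligned with the enlarged Peirce $2$-space $\t Z':=Z^2_{e_1+\ldots+e_{\el+1}}$, so that only basis vectors in $\PL_\ll(\t Z')$ contribute ($N_{m,\lb}\in\PL(\t Z')$ annihilates the rest), and then applying Lemma \er{i} at level $\el+1$ with $T_{N_{\el+1}^{\lb_\el}}^*$ (always valid since $(m,\lb)_{\el+1}=\lb_\el$) to strip off the $N_{\el+1}^{\lb_\el}$-factor and reduce to a shorter $\lb$ already handled above. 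The hypothesis (\er{20}), through Corollary \er{v} and Proposition \er{q}, supplies the auxiliary identities absorbing the residual sub-leading contributions into $\BL_\lL$. This bookkeeping is the technically delicate step of the induction.
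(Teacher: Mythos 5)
Your structural setup is correct: \er{32} reduces the claim to showing $\left\{a_{m,\lb}\right\}_m\in\SL$ for every $\lb$ with $\ll'\le\lb\le\ll$, and your remarks on membership of the correction in $\CL$ and on setting $a^\ll_\lb=b^\ll_\lb=0$ for $\lb\ngeq\ll'$ are right. But the route you propose for $\left\{a_{m,\lb}\right\}_m\in\SL$ diverges from the paper's and contains a genuine gap that you flag but do not close: the subcase $\lb_\el\ge 1$. Your remedy (pass to the larger Peirce $2$-space $Z^2_{e_1+\ldots+e_{\el+1}}$ and apply Lemma \er{i} at level $\el+1$ with $T_{N_{\el+1}^{\lb_\el}}^*$) is only a sketch: subtracting $(\lb_\el)_{\el+1}$ from $(m,\lb)$ lowers all entries through position $\el+1$, including the first, and does not hand you a shorter-length partition $\lg<\el$ against which your length-induction hypothesis applies; and the asserted vanishing of contributions outside $\PL_\ll(Z^2_{e_1+\ldots+e_{\el+1}})$ is not established. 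Moreover, the role of assumption \er{20} is left vague ("through Corollary \er{v} and Proposition \er{q}"), whereas in the paper it enters in a sharp and different way.

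The paper's proof avoids all the Peirce case analysis and the secondary induction on the length of $\ll$ by one observation you do not use: factor $N_{m,\lb}=N_{\lb_1,\lb}\cdot N_1^{m-\lb_1}$, so that
$$a_{m,\lb}=\S_i\f{\|\bigl(PT_{p_i}^*T_{N_{\lb_1,\lb}}P\bigr)N_1^{m-\lb_1}\|_S^2}{\|N_1^{m-\lb_1}\|_S^2}\cdot\f{\|N_1^{m-\lb_1}\|_S^2}{\|N_{m,\lb}\|_S^2}.$$
Since $\deg p_i=|\ll|<k$, assumption \er{20} gives $PT_{p_i}^*T_{N_{\lb_1,\lb}}P\in\AL+\BL_\lL$ directly; as $\AL+\BL_\lL$ is a $*$-algebra, $T:=\S_i PT_{N_{\lb_1,\lb}}^*T_{p_i}PT_{p_i}^*T_{N_{\lb_1,\lb}}P\in\AL+\BL_\lL$ as well. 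Lemma \er{ap} --- the tool your plan never invokes --- says precisely that the $N_1^m$-diagonal of any operator in $\AL+\BL_\lL$ is an $\SL$-sequence, and multiplying by $\f{\|N_1^{m-\lb_1}\|_S^2}{\|N_{m,\lb}\|_S^2}\in\SL_+$ (Lemma \er{qn}) gives $\left\{a_{m,\lb}\right\}_m\in\SL$ uniformly for every admissible $\lb$ (including $\lb=\ll'$, so Proposition \er{j} is not even needed here). This single factorization is the missing idea; without it, or a worked-out substitute for your $\lb_\el\ge 1$ step, the proof is incomplete.
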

\begin{proof} For $\lb\le\ll\le(m,\lb)$ we have $N_{m,\lb}=N_{\lb_1,\lb}N_1^{m-\lb_1}$ and hence
$$\f{(N_{m,\lb}|A^\ll N_{m,\lb})_S}{\|N_{m,\lb}\|_S^2}=\S_i\f{\|PT_{p_i}^*N_{m,\lb}\|_S^2}{\|N_{m,\lb}\|_S^2}=\S_i\f{\|PT_{p_i}^*T_{N_{\lb_1,\lb}}N_1^{m-\lb_1}\|_S^2}{\|N_1^{m-\lb_1}\|_S^2}\f{\|N_1^{m-\lb_1}\|_S^2}{\|N_{m,\lb}\|_S^2}.$$
Since $\deg(p_i)=|\ll|<k,$ \er{20} implies $PT_{p_i}^*T_{N_{\lb_1,\lb}}P\in\AL+\BL_\lL.$ By Lemma \er{ap} and Lemma \er{qn}, we have that
$\left\{\f{(N_{m,\lb}|A^\ll N_{m,\lb})_S}{\|N_{m,\lb}\|_S^2}\right\}_m\in\SL$. By \er{32} there is a sequence $\oL_m,$ with $m^2\,\oL_m$ bounded, such that
$$A^\ll=\S_{\lb\le\ll\le(m,\lb)}\f{(N_{m,\lb}|A^\ll N_{m,\lb})_S}{\|N_{m,\lb}\|_S^2}\,P_{m,\lb}
=\S_{\lb\le\ll\le(m,\lb)}\(\f{a^\ll_\lb m+b^\ll_\lb}{m+1}+\oL_{m}\)P_{m,\lb}=\S_{\lb\le\ll}\f{a^\ll_\lb\lL+b^\ll_\lb}{\lL+1}P^\lb+B,$$
where we set $a^\ll_\lb=b^\ll_\lb=0$ if $\lb\ngeq\ll'.$ Thus $B-\S_{\lb\le\ll\le(m,\lb)}\oL_m\,P_{m,\lb}$ has finite rank and hence $B\in\BL$.
\end{proof}
\begin{claim}\label{m} Under the assumption \er{20} there exist constants $c^\lb_\la,d^\lb_\la$ such that
\be{24}P^\lb-\S_{\la\le\lb}\f{c^\lb_\la\lL+d^\lb_\la}{\lL+1}A^\la\in\BL,\qquad\forall\,\lb\in\Nl_+^{r-1},\,|\lb|<k.\ee
\end{claim}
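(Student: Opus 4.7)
My plan is forward induction on $|\lb|$. The base case $\lb=0$ is immediate since $A^0=P=P^0$. For the inductive step, assume Claim \ref{m} for all $\mu\in\Nl^{r-1}_+$ with $|\mu|<|\lb|$ and apply Claim \ref{p} to the partition $\ll=\lb$ (viewed in $\Nl^r_+$ by appending a trailing zero; this is legitimate because $|\lb|<k$). Isolating the $\mu=\lb$ contribution from that expansion yields
$$\f{a^\lb_\lb\lL+b^\lb_\lb}{\lL+1}P^\lb\equiv A^\lb-\S_{\lb'\le\mu<\lb}\f{a^\lb_\mu\lL+b^\lb_\mu}{\lL+1}P^\mu\pmod{\BL}.$$
Each $\mu<\lb$ in the sum satisfies $|\mu|<|\lb|$, so the inductive hypothesis rewrites every $P^\mu$ modulo $\BL$ as a linear combination of $A^\la$'s (with $\la\le\mu<\lb$) whose coefficients lie in the ring $R:=\{\f{c\lL+d}{\lL+1}:c,d\in\Cl\}$.

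The substitution closes up because $R$ is well-behaved modulo $\BL$. From $\f{1}{(\lL+1)^2}\in\BL$ one derives the composition rule
$$\f{c\lL+d}{\lL+1}\cdot\f{c'\lL+d'}{\lL+1}\equiv\f{cc'\lL+(cd'+c'd-cc')}{\lL+1}\pmod{\BL},$$
so $R$ is closed under products, and when the leading coefficient $c\ne 0$ a similar calculation shows that the formal inverse $\f{\lL+1}{c\lL+d}$ agrees modulo $\BL$ with an element of $R$. Each $A^\la$ is $K$-invariant, hence commutes with every function of $\lL$, so substituting the inductive expansions of the $P^\mu$'s and simplifying with these rules produces
$$\f{a^\lb_\lb\lL+b^\lb_\lb}{\lL+1}P^\lb\equiv A^\lb-\S_{\la<\lb}\f{\t c_\la\lL+\t d_\la}{\lL+1}A^\la\pmod{\BL}.$$
Provided $a^\lb_\lb\ne 0$, multiplying through by the $R$-inverse of the left-hand coefficient yields the desired expansion $P^\lb\equiv\S_{\la\le\lb}\f{c^\lb_\la\lL+d^\lb_\la}{\lL+1}A^\la\pmod{\BL}$, completing the induction.

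The genuinely non-formal step, and what I expect to be the main obstacle, is the nonvanishing of $a^\lb_\lb$. Proposition \ref{j} supplies positivity only at the opposite endpoint $\mu=\ll'$ of the range appearing in Claim \ref{p}; what is needed here is the parallel statement $\{\f{(N_{m,\lb}|A^\lb N_{m,\lb})_S}{\|N_{m,\lb}\|_S^2}\}_m\in\SL_+$. I would establish this by an induction on the length $\el$ of $\lb$ in the same spirit as Proposition \ref{j}: write $k=\lb_\el$ and $\lg=\lb-k_\el$, decompose an orthonormal basis of $\PL_\lb(Z)$ so that a subfamily restricts to an orthonormal basis of $\PL_\lb(\t Z)=N_\el^k\,\PL_\lg(\t Z)$ for the Peirce $2$-subspace $\t Z$ of rank $\el$, apply Lemma \ref{i} to evaluate $T_{N_\el^k}^*$ on $N_{m,\lb}$ as a scalar multiple of a simpler vector, and invoke the inductive hypothesis for $\lg$ (which has strictly smaller length). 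The explicit product of strictly positive Pochhammer ratios emerging from this reduction then secures $a^\lb_\lb>0$.
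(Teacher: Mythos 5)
The skeleton of your argument — induction on $|\lb|$, applying Claim \ref{p} with $\ll=\lb$, isolating the $\mu=\lb$ term, substituting the inductive hypothesis for each $P^\mu$ with $\mu<\lb$, and closing with the ring $R=\{\f{c\lL+d}{\lL+1}\}$ modulo $\BL$ — coincides with the paper's proof, and your composition/inversion rules for $R$ modulo $\BL$ are correct. You also correctly isolate the one genuinely non-formal point, the non-vanishing of $a^\lb_\lb$, and you are right that Proposition \ref{j} (which gives $\SL_+$-membership for the sequence built from $N_{m,\ll'}$) sits at the opposite endpoint $\mu=\ll'$ of the range $\ll'\le\mu\le\lb$ and does not directly cover $\mu=\lb$.

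Your proposed fix for $a^\lb_\lb>0$, however, does not go through. You want to mimic Proposition \ref{j} by factoring the orthonormal basis of $\PL_\lb(\t Z)$ as $p_i=cN_\el^k q_i$ with $\t Z=Z^2_{e_1+\cdots+e_\el}$ of rank $\el$, and then ``apply Lemma \ref{i} to evaluate $T_{N_\el^k}^*$ on $N_{m,\lb}$.'' But Lemma \ref{i} requires the target partition to lie in $\Nl^\el_+$, i.e.\ to have vanishing $(\el+1)$-st component. In Proposition \ref{j} the vector is $N_{m,\ll'}$, whose underlying partition $(m,\ll_2,\ldots,\ll_r)$ has $\ll_{\el+1}=0$ at position $\el+1$ and exactly $k=\ll_\el$ at position $\el$, so $T_{N_\el^k}^*$ peels off cleanly to land on $N_{m-k,\lg'}$. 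For your vector $N_{m,\lb}$ the underlying partition is $(m,\lb_1,\ldots,\lb_\el,0,\ldots)$, which has the last nonzero entry $\lb_\el=k$ at position $\el+1$, not $\el$; hence $(m,\lb)\notin\Nl^\el_+$ and Lemma \ref{i} with $N_\el^k$ is simply not applicable. Replacing $N_\el^k$ by $N_{\el+1}^k$ fixes the Lemma \ref{i} step (it yields a multiple of $N_{m-k,\lg}$), but then the factorization of the orthonormal basis of $\PL_\lb(\t Z)$ through $N_\el^k$ no longer meshes with the operator $T_{N_{\el+1}^k}^*$, and there is no analogous factorization through $N_{\el+1}^k$ because $\lb_{\el+1}=0<k$. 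So the reduction step does not close, and your parallel induction stalls precisely at the point where Proposition \ref{j}'s proof succeeds.

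The paper instead gets $a^\lb_\lb>0$ by an entirely different mechanism: it observes that $a_{m,\lb}=\f{(N_{m,\lb}|A^\lb N_{m,\lb})_S}{\|N_{m,\lb}\|_S^2}$ is the eigenvalue of the operator $\S_{|\ll|=j}A^\ll$ on $\PL_{m,\lb}(Z)$ (the cross terms vanish since $\lb\le\ll$ and $|\lb|=|\ll|$ force $\ll=\lb$), and then invokes the uniform spectral lower bound of Upmeier's Theorem 1.6 in \cite{U2}, which asserts $a_\lm\ge c>0$ whenever $\lm_2+\cdots+\lm_r=j$. Since Claim \ref{p}'s proof already puts the sequence in $\SL$, the limit $a^\lb_\lb\ge c>0$ follows at once. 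If you want a self-contained argument rather than the citation, you would need a genuinely new reduction for the ``$\mu=\lb$'' endpoint, not a transplant of the ``$\mu=\ll'$'' argument.
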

\begin{proof} We use induction on $|\lb|.$ The case $\lb=0$ is trivial. Assume
\er{24} holds for all $\lb$ with $|\lb|<j<k$. Let $\lb$ satisfy $|\lb|=j.$ Then Claim \er{p} implies
\be{251}A^\lb=\f{a^\lb_\lb\lL+b^\lb_\lb}{\lL+1}P^\lb+\S_{\la<\lb}\f{a^\lb_\la\lL+b^\lb_\la}{\lL+1}P^\la+B^\lb,\ee
where $B^\lb\in\BL$, and $\la<\lb$ means that $\la\leq\lb$ and $\la\neq \lb$. Now consider the diagonal operator
$$\S_{|\ll|=j}A^\ll=\S_{\lm\in\Nl^r_+}a_\lm P_\lm.$$
If $|\ll|=j,$ then $(N_{m,\lb}|A^\ll N_{m,\lb})_S$ is non-zero only if $\ll=\lb,$ since $\lb\le\ll$ and $|\lb|=j=|\ll|.$ Therefore
$$a_{m,\lb}\,N_{m,\lb}=\S_{|\ll|=j}A^\ll N_{m,\lb}=\S_{|\ll|=j}\f{(N_{m,\lb}|A^\ll N_{m,\lb})_S}{\|N_{m,\lb}\|_S^2}\,N_{m,\lb}
=\f{(N_{m,\lb}|A^\lb N_{m,\lb})_S}{\|N_{m,\lb}\|_S^2}\,N_{m,\lb}.$$
By \cite[Theorem 1.6]{U2}, there exists a constant $c>0$ such that $a_\lm\ge c$ whenever $\lm_2+\cdots+\lm_r=j.$ For
$\lm=(m,\lb)$ this implies $\f{(N_{m,\lb}|A^\lb N_{m,\lb})_S}{\|N_{m,\lb}\|_S^2}=a_{m,\lb}\ge c$ and hence $a^\lb_\lb
=\lim_m \f{(N_{m,\lb}|A^\lb N_{m,\lb})_S}{\|N_{m,\lb}\|_S^2}\ge c>0.$
For any $|\la|<|\lb|=j,$ the induction hypothesis implies
$$P^\la=\S_{\lg\le\la}\f{c^\la_\lg\lL+d^\la_\lg}{\lL+1}A^\lg+B^\la,$$
where $B^\la\in\BL.$ Plugging into \er{251} we obtain
$$P^\lb=\f{\lL+1}{a_\lb^\lb\lL+b_\lb^\lb}\[A^\lb-\S_{\la<\lb}\f{a_\la^\lb\lL+b_\la^\lb}{a_\lb^\lb\lL+b_\lb^\lb}\(\S_{\lg\le\la}
\f{c_\lg^\la\lL+d_\lg^\la}{\lL+1}A^\lg+B^\la\)-B^\lb\].$$
It is easy to see that this expression has the desired form.
\end{proof}
\begin{claim}\label{t} The assumption \er{20} implies $\AL_{k,k}\ic\AL+\BL_\lL.$
\end{claim}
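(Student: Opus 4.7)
The plan is to prove $\AL_{k,k}\ic\AL+\BL_\lL$ by splitting $PT_p^*T_qP$ via the orthogonal decomposition $I=\S_{\lb\in\Nl^{r-1}_+}P^\lb$ of the identity on $H^2(S)$ and treating each piece separately. Writing
$$PT_p^*T_qP=\S_\lb PT_p^*P^\lb T_qP,$$
Lemma \er{z} applied to the homogeneous components of $q$ forces $T_qP$ to produce only terms $P_\lm$ with $|\lm'|\le\deg q\le k$, so only the finitely many $\lb$ with $|\lb|\le k$ contribute to the sum.

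The case $|\lb|\ge k$ is handled by a direct appeal to Proposition \er{q}: choose any $\ll\in\Nl^r_+$ with $\ll'=\lb$ (e.g.\ $\ll=(\lb_1,\lb)$); then $\deg p,\deg q\le k\le|\ll'|$, and the Proposition yields $PT_p^*P^\lb T_qP\in\AL+\BL_\lL$. For $|\lb|<k$ I would invoke Claim \er{m} to write
$$P^\lb\equiv\S_{\la\le\lb}\f{c^\lb_\la\lL+d^\lb_\la}{\lL+1}A^\la\pmod{\BL},$$
and substitute. The error contributes $PT_p^*B^\lb T_qP\in\BL$ since $\BL$ is an ideal in $\CL$. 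For each main term, expand $A^\la=\S_i T_{p_i}PT_{p_i}^*$ with $p_i\in\PL_\la(Z)$ of degree $|\la|\le|\lb|<k$, so that
$$PT_p^*A^\la T_qP=\S_i(PT_p^*T_{p_i}P)(PT_{p_i}^*T_qP).$$
By the inductive hypothesis \er{20}, each factor belongs to $\AL+\BL_\lL$ (the smaller degree is $|\la|<k$), and the $*$-algebra structure of $\AL$ together with the $\AL$-bimodule property of $\BL_\lL$ from Lemma \er{ba} keeps the product inside $\AL+\BL_\lL$.

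It remains to handle the resolvent prefactor, which I would split as $\f{c^\lb_\la\lL+d^\lb_\la}{\lL+1}=c^\lb_\la+\f{d^\lb_\la-c^\lb_\la}{\lL+1}$. The constant piece feeds directly into the previous analysis. The $(\lL+1)^{-1}$ piece is commuted past $T_p^*$ using $[\CL,(\lL+t)^{-1}]\ic\BL$, picking up an error in $\BL$; the resulting operator $(\lL+1)^{-1}PT_p^*A^\la T_qP$ lies in $\BL_\lL$ because left multiplication by $(\lL+1)^{-1}$ sends $\AL+\BL_\lL$ into $\BL_\lL$ (using boundedness of $\lL/(\lL+1)$ and the already-established ideal/bimodule properties). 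Summing finitely many contributions over $\lb$ then produces $PT_p^*T_qP\in\AL+\BL_\lL$. The main obstacle will be the bookkeeping at this last step: one must verify carefully that every commutator and product involving $(\lL+1)^{-1}$ and elements of $\AL$, $\BL$, or $\BL_\lL$ stays inside $\AL+\BL_\lL$. Once these inclusions are secured, the argument reduces cleanly to Proposition \er{q} and the induction hypothesis \er{20}.
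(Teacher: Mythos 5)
Your proposal is correct and follows essentially the same route as the paper: decompose $PT_p^*T_qP$ over the projections $P^\lb$ with $|\lb|\le k$, apply Proposition \ref{q} for $|\lb|=k$, and for $|\lb|<k$ substitute Claim \ref{m}, expand $A^\la=\S_i T_{p_i}PT_{p_i}^*$ so the induction hypothesis \eqref{20} applies to each factor, then absorb the resolvent prefactor by commuting it past $T_qP$ using $[\CL,(\lL+t)^{-1}]\ic\BL$ together with the ideal/bimodule structure. The paper's version is more compressed (it writes the key step as $PT_p^*A^\la T_qP\in\AL_{k,h}\AL_{h,k}\ic\AL+\BL_\lL$ without spelling out the middle-$P$ insertion, and handles the resolvent in a single displayed commutator identity), but the content is identical.
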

\begin{proof} Let $\deg p=\deg q=k.$ Then
$$PT_p^*T_qP=\S_{|\lb|\le k}PT_p^*P^\lb T_qP.$$
If $|\lb|=k$ then $PT_p^*P^\lb T_qP\in\AL+\BL_\lL$ by Proposition \er{q}. If $|\lb|=h<k$ and $\la\le\lb$ then \er{20} implies
$PT_p^*A^\la T_qP\in\AL_{k,h}\AL_{h,k}\ic\AL+\BL_\lL.$ It follows that
$$PT_p^*A^\la\f{c^\lb_\la\lL+d^\lb_\la}{\lL+1}T_qP=PT_p^*A^\la T_qP\f{c^\lb_\la\lL+d^\lb_\la}{\lL+1}
+PT_p^*A^\la\[\f{c^\lb_\la\lL+d^\lb_\la}{\lL+1},T_qP\]\in\AL+\BL_\lL,$$
since $\[\f{c^\lb_\la\lL+d^\lb_\la}{\lL+1},\CL\]\ic\BL$ and $\BL$ is an ideal in $\CL$. Therefore Claim \er{m} implies $PT_p^*P^\lb T_qP\in\AL+\BL_\lL.$
\end{proof}
\begin{claim}\label{w} Under the assumption \er{20}, for $T\in\CL$ and $q\in\PL(Z)$ of degree $i<k$ there exists $B\in\BL$ such that
$$PT[T_u^*,T_v]T_qP=B+\S_{|\lb|\le i}\S_{\la\le\lb}\S_{\lg\le\lb}PT A^\la[T_u^*,T_v]A^\lg T_qP
\f{c^\lb_\la c^\lb_\lg\lL+c_\la^\lb(d^\lb_\lg-c_\lg^\lb)+(d^\lb_\la-c_\la^\lb)c_\lg^\lb}{\lL+1}.$$
\end{claim}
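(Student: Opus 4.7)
\textbf{Proof proposal for Claim \er{w}.}
The plan is to expand $[T_u^*,T_v]$ by inserting the resolution of the identity $I=\S_\lb P^\lb$ on both sides, apply Claim \er{m} to replace each $P^\lb$ by its $A^\la$-expansion, and track all error terms modulo $\BL$.

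First I would reduce to a finite sum. Inserting $I=\S_\lb P^\lb$ between $[T_u^*,T_v]$ and $T_qP$ and noting, by the Pieri-type argument behind Lemma \er{z} and Proposition \er{q}, that $P^\lb T_qP=0$ for $|\lb|>\deg q=i$, we obtain
$$PT[T_u^*,T_v]T_qP=\S_{|\lb|\le i}PT[T_u^*,T_v]P^\lb T_qP,$$
and crucially $|\lb|<k$ in each surviving term, so Claim \er{m} applies.

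Next I would use $(P^\lb)^2=P^\lb$ together with the splitting $[T_u^*,T_v]P^\lb=P^\lb[T_u^*,T_v]P^\lb+(I-P^\lb)[T_u^*,T_v]P^\lb$ to place a $P^\lb$ on the left of $[T_u^*,T_v]$ as well, modulo an off-diagonal cross-term. Substituting $P^\lb=E_\lb+B^\lb$ from Claim \er{m}, with $E_\lb=\S_{\la\le\lb}\f{c^\lb_\la\lL+d^\lb_\la}{\lL+1}A^\la$ and $B^\lb\in\BL$, and using that $\BL$ is a $\CL$-ideal, every term carrying a $B^\lb$-factor drops into the error $B$. I would then commute the rational functions in $\lL$ past $[T_u^*,T_v]$, $T$, and $T_q$, which is legitimate modulo $\BL$ because $[\CL,(\lL+t)^{-1}]\ic\BL$ and, by the Leibniz rule applied to $T_u^*T_v-T_vT_u^*$, also $[[T_u^*,T_v],(\lL+1)^{-1}]\in\BL$. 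Note too that each $A^\la$, being $K$-invariant and diagonal in the Peter-Weyl decomposition, commutes with every function of $\lL$ on the nose.

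Finally I would collect the surviving terms of the form $PT A^\la[T_u^*,T_v]A^\lg T_qP$ times a rational function in $\lL$, simplify by discarding $(\lL+1)^{-2}$-order pieces (which lie in $\BL$ by its very definition, since $\BL=\{B\in\CL:B\lL^2\text{ bounded}\}$), and match the result against the claimed coefficient.

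The main obstacle is the final algebraic identity. A naive product of the two $E_\lb$-expansions gives the coefficient $\f{(c^\lb_\la\lL+d^\lb_\la)(c^\lb_\lg\lL+d^\lb_\lg)}{(\lL+1)^2}$, which differs from the claimed form $\f{c^\lb_\la c^\lb_\lg\lL+c^\lb_\la(d^\lb_\lg-c^\lb_\lg)+(d^\lb_\la-c^\lb_\la)c^\lb_\lg}{\lL+1}$ not only by $(\lL+1)^{-2}$-terms (which are harmless) but also by a $(\lL+1)^{-1}$-correction $\f{c^\lb_\la c^\lb_\lg}{\lL+1}$, which is \emph{not} in $\BL$. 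This missing $(\lL+1)^{-1}$-piece must therefore arise from the off-diagonal cross-term $PT(I-P^\lb)[T_u^*,T_v]P^\lb T_qP$, and the delicate part of the argument is showing that the symmetric combination of those cross-terms (expanded again through Claim \er{m} for the neighboring $\lb'$) contributes exactly the correction needed to land on the specific coefficient stated in the claim.
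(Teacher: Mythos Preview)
The central gap is that you missed the structural fact the paper invokes at the very first step: the commutator $[T_u^*,T_v]$ is \emph{block-diagonal} with respect to the decomposition $H^2(S)=\bigoplus_\lb P^\lb$, i.e.\ it commutes with every $P^\lb$ (this is \cite[Lemma 2.1]{U2} in the paper). Combined with $\mbox{Ran}(T_qP)\subset\sum_{|\lb|\le i}P^\lb$, this immediately gives
\[
PT[T_u^*,T_v]T_qP=\S_{|\lb|\le i}PTP^\lb[T_u^*,T_v]P^\lb T_qP,
\]
with a $P^\lb$ on \emph{both} sides of the commutator and \emph{no} cross-term $(I-P^\lb)[T_u^*,T_v]P^\lb$: that operator is identically zero. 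From here the paper simply substitutes Claim \er{m} for each $P^\lb$, drops every term carrying a $\BL$-factor via the ideal property, and pushes the rational $\lL$-factors to the far right using $[\CL,(\lL+t)^{-1}]\subset\BL$. Note incidentally that $[T_u^*,T_v]$, being block-diagonal and degree-preserving, actually commutes with $\lL$ exactly, so no Leibniz manoeuvre is needed there.

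Consequently your ``main obstacle'' is a phantom: the mechanism you propose for generating the missing $\f{c^\lb_\la c^\lb_\lg}{\lL+1}$ --- namely the off-diagonal pieces --- does not exist. The arithmetic discrepancy you noticed between the naive product $\f{(c^\lb_\la\lL+d^\lb_\la)(c^\lb_\lg\lL+d^\lb_\lg)}{(\lL+1)^2}$ and the displayed coefficient is real, but it is not resolved by any further cancellation in the argument; the paper's own proof does not track the constant term at all (``the assertion follows''). What matters downstream, in Claim \er{s} and in the proof of Theorem \er{mr}, is only that each summand has the shape $PT A^\la[T_u^*,T_v]A^\lg T_qP$ times a function of the form $\f{\text{const}\cdot\lL+\text{const}}{\lL+1}$, so that multiplying on the left by elements of $\AL+\BL_\lL$ keeps everything in $\AL+\BL_\lL$. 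The precise numerator constant is immaterial, and the one printed in the statement appears to be a harmless slip.
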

\begin{proof} Since $\mbox{Ran}(T_qP)\ic\S_{|\lb|\le i}P^\lb$ and $[T_u^*,T_v]$ is a 'block-diagonal' operator \cite[Lemma 2.1]{U2} which commutes with each $P^\lb,$ it suffices to consider $PTP^\lb[T_u^*,T_v]P^\lb T_qP$ for $\lb\in\Nl^{r-1}_+$ satisfying $|\lb|\le i.$ By Claim \er{m} we have
$$PTP^\lb[T_u^*,T_v]P^\lb T_qP=PT\(B_1+\S_{\la\le\lb}\f{c^\lb_\la\lL+d^\lb_\la}{\lL+1}A^\la\)[T_u^*,T_v]
\(B_2+\S_{\lg\le\lb}\f{c^\lb_\lg\lL+d^\lb_\lg}{\lL+1}A^\lg\)T_qP,$$
where $B_1,B_2\in\BL.$ Since $\BL\ic\CL$ is an ideal and $\CL$ contains $PT,\,[T_u^*,T_v],\,A^\la,\,A^\lg,\,T_qP,$ the assertion follows.
\end{proof}
\begin{claim}\label{s} The assumption \er{20} implies
\be{26}PT_\lf^*[T_u^*,T_v]T_\lq P\in\AL+\BL_\lL\ee
whenever $\deg\lf,\deg\lq<k.$
\end{claim}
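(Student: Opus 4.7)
My plan is to apply Claim \er{w} with $T=PT_\lf^*\in\CL$ and $q=\lq$ (so $i=\deg\lq<k$), which produces
$$
PT_\lf^*[T_u^*,T_v]T_\lq P = B + \S_{|\lb|\le i}\S_{\la,\lg\le\lb} PT_\lf^* A^\la[T_u^*,T_v] A^\lg T_\lq P\cdot r_{\la,\lg}^\lb(\lL)
$$
with $B\in\BL$ and each $r_{\la,\lg}^\lb(\lL)=\f{c\lL+d}{\lL+1}=c+\f{d-c}{\lL+1}$. Using $[\CL,(\lL+1)^{-1}]\ic\BL$ and the fact that $\BL_\lL$ is a $*$-algebra and $\AL$-bimodule (Lemma \er{ba}), each $r_{\la,\lg}^\lb(\lL)$ may be commuted past the surrounding operators modulo $\BL_\lL$. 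Hence it suffices to show
$$X_{\la,\lg}:=PT_\lf^* A^\la[T_u^*,T_v] A^\lg T_\lq P\in\AL+\BL_\lL$$
for each $\la,\lg\le\lb$, where $|\la|,|\lg|\le|\lb|\le\deg\lq<k$.

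Choose orthonormal bases $\{p_i\}\ic\PL_\la(Z)$ and $\{q_j\}\ic\PL_\lg(Z)$, giving
$$X_{\la,\lg}=\S_{i,j}(PT_\lf^* T_{p_i} P)(T_{p_i}^*[T_u^*,T_v] T_{q_j})(PT_{q_j}^* T_\lq P).$$
The outer factors lie in $\AL+\BL_\lL$ by \er{20}, as $\deg\lf,\deg\lq<k$. Since each $T_p^*$ preserves $H^2_1$ (inherited from $T_{u_l}^*|_{\PL_{m,0}}$ via \er{16}), the left outer factor ends in $P$ while the right outer factor lands in $H^2_1$; this allows inserting $P$'s around the middle block without altering the operator. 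Split $[T_u^*,T_v]=T_u^* T_v-T_v T_u^*$. For the $T_u^* T_v$ part, Toeplitz multiplicativity yields $T_{p_i}^* T_u^* T_v T_{q_j}=T_{up_i}^* T_{vq_j}$, so this contribution equals $(PT_\lf^* T_{p_i} P)(PT_{up_i}^* T_{vq_j} P)(PT_{q_j}^* T_\lq P)$, with middle $PT_{up_i}^* T_{vq_j} P\in\AL_{|\la|+1,|\lg|+1}\ic\AL+\BL_\lL$ by \er{20} (when $\min(|\la|+1,|\lg|+1)<k$) or by Claim \er{t} (when $|\la|+1=|\lg|+1=k$).

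For the $T_v T_u^*$ part, write $T_u^* T_{q_j}=T_{q_j} T_u^*+[T_u^*,T_{q_j}]$. The $T_{q_j} T_u^*$ sub-piece combines via $T_v T_{q_j}=T_{vq_j}$ and $T_u^* P=PT_u^* P=S_u^* P$ into
$$(PT_\lf^* T_{p_i} P)(PT_{p_i}^* T_{vq_j} P)\,S_u^*\,(PT_{q_j}^* T_\lq P),$$
where $PT_{p_i}^* T_{vq_j} P\in\AL_{|\la|,|\lg|+1}\ic\AL+\BL_\lL$ by \er{20} (as $|\la|<k$) and $S_u^*\in\AL$. For the $[T_u^*,T_{q_j}]$ sub-piece, write $q_j=u_1'\cdots u_s'$ and expand $[T_u^*,T_{q_j}]=\S_l T_{u_1'\cdots u_{l-1}'}[T_u^*,T_{u_l'}] T_{u_{l+1}'\cdots u_s'}$ via the derivation rule, reducing each summand to one involving only a linear--linear commutator $[T_u^*,T_{u_l'}]$ flanked by explicit Toeplitz polynomials. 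Iterating the $T_u^* T_{u_l'}$ vs.\ $T_{u_l'} T_u^*$ splitting inside the resulting middle factor and repeatedly invoking \er{20}, Claim \er{t}, and the identities $T_u^* P=PT_u^* P$ and $T_p^* T_q^*=T_{pq}^*$, each summand can be successively reduced to products of elements of $\AL+\BL_\lL$. The main obstacle is the iterative commutator bookkeeping: one must ensure every bidegree arising in the recursion stays within the range where \er{20} or Claim \er{t} applies, and that the reduction terminates at the base case $P[T_u^*,T_{u_l'}]P\in\AL+\BL_\lL$, which is furnished by Propositions \er{aab} and \er{q}.
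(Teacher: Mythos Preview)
Your proof has the right ingredients (Claim \er{w}, the commutator derivation rule, assumption \er{20}, Claim \er{t}), but the organization is circular and the final ``iterating'' step is not a proof.

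The initial application of Claim \er{w} does not reduce the problem: after inserting the $P$'s (which, incidentally, follows from $P^2=P$ alone), the middle factor is $PT_{p_i}^*[T_u^*,T_v]T_{q_j}P$ with $\deg p_i=|\la|$ and $\deg q_j=|\lg|$ both bounded only by $\deg\lq$, i.e.\ you are back to the original statement with no decrease in degree. You then break this middle factor, and the $T_u^*T_v$ and $T_{q_j}T_u^*$ pieces are indeed handled correctly. But for the $[T_u^*,T_{q_j}]$ piece, after the derivation expansion you face
\[
P\,T_{p_i}^*\,T_{v\,u_1'\cdots u_{l-1}'}\,[T_u^*,T_{u_l'}]\,T_{u_{l+1}'\cdots u_s'}\,P,
\]
which is \emph{not} of the form $PT_{\lf'}^*[T_u^*,T_{v'}]T_{\lq'}P$: there is a holomorphic factor $T_{v\,u_1'\cdots u_{l-1}'}$ wedged between $T_{p_i}^*$ and the commutator. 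Your suggestion to ``iterate the $T_u^*T_{u_l'}$ vs.\ $T_{u_l'}T_u^*$ splitting'' then produces terms like $PT_{p_i}^*T_{\cdots}T_u^*T_{\cdots}P$, where the stranded $T_u^*$ cannot be absorbed into $T_{p_i}^*$ without generating yet another commutator of the same shape; you give no argument that this recursion is well-founded, and you yourself flag it as ``the main obstacle.''

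The paper resolves this by running an explicit induction on $h=\max(\deg\lf,\deg\lq)$ from the outset. In the inductive step one first uses the identity $[T_u^*,T_v]T_\lq=[T_u^*,T_{v\lq}]-T_v[T_u^*,T_\lq]$ to absorb $T_v$ before expanding $[T_u^*,T_\lq]$ via the derivation rule, and only \emph{then} applies Claim \er{w} to each resulting summand. The point is that Claim~\er{w}, applied with the right-hand polynomial of degree $i<h$, yields middle blocks $PT_q^*[T_u^*,T_{v_i}]T_\lx P$ with $\deg q,\deg\lx\le i<h$, which fall to the induction hypothesis. In short: do not apply Claim \er{w} first; set up the induction on $h$, peel off $T_v$ into the commutator, expand, and invoke Claim \er{w} at the inner step so that the induction hypothesis applies directly.
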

\begin{proof} We prove \er{26} by induction on $h=\max(\deg\lf,\deg\lq)<k$. For $h=0$, we have
$$P[T_u^*,T_v]P=PT_u^*T_vP-PT_vPT_u^*P,$$
where $PT_u^*T_vP\in\AL_{1,1}\ic\AL+\BL_\lL$ by Claim \er{t}, and $PT_vPT_u^*P\in\AL.$
For the induction step, let $\lf,\lq$ be polynomials with $\deg\lf\le h=\deg\lq<k,$ and we may assume that \er{26} holds in the case of the maximal degree less than $h.$ Then
$$PT_\lf^*[T_u^*,T_v]T_\lq P=PT_\lf^*\([T_u^*,T_{v\lq}]-T_v[T_u^*,T_\lq]\)P
=PT_{\lf u}^*T_{v\lq}P-PT_\lf^*T_{v\lq}PT_u^*P-PT_\lf^*T_v[T_u^*,T_\lq]P.$$
By the assumption \er{20}, we have $PT_\lf^*T_{v\lq}P\in\AL_{h,h+1}\ic\AL+\BL_\lL,$ and using Claim \er{t} also $PT_{\lf u}^*T_{v\lq}P\in\AL_{h+1,h+1}\ic\AL+\BL_\lL.$ For the third term we may assume that $\lq=v_{h-1}\,\cdots v_0$ for some linear functions $v_i.$ Then
$$PT_\lf^*T_v[T_u^*,T_\lq]P=\S_{i=0}^{h-1}PT_\lf^*T_{v\cdot v_{h-1}\cdots v_{i+1}}[T_u^*,T_{v_i}]T_{v_{i-1}\cdots v_0}P.$$
If $p,q,\lx,\lh$ are polynomials of degree $\le i<h$ we have
$$PT_\lf^*T_{v\cdot v_{h-1}\cdots v_{i+1}}T_pPT_q^*[T_u^*,T_{v_i}]T_\lx PT_\lh^*T_{v_{i-1}\cdots v_0}P\in\AL+\BL_\lL,$$
since \er{20} implies that $\AL+\BL_\lL$ contains $PT_\lf^*T_{v\cdot v_{h-1}\cdots v_{i+1}}T_pP\in\AL_{h,h}$ and
$PT_\lh^*T_{v_{i-1}\cdots v_0}P\in\AL_{i,i},$ and the induction hypothesis implies $PT_q^*[T_u^*,T_{v_i}]T_\lx P\in\AL+\BL_\lL.$ Thus
$$PT_\lf^*T_{v\cdot v_{h-1}\cdots v_{i+1}}A^\la[T_u^*,T_{v_i}]A^\lr T_{v_{i-1}\cdots v_0}P\ic\AL+\BL_\lL,$$
whenever $|\la|\le i$ and $|\lr|\le i.$ Now the assertion follows from Claim \er{w}
\end{proof}
The {\bf proof of Theorem \er{mr}} can now be completed as follows. Since $\AL_{m,n}^*=\AL_{n,m},$ it suffices to show that
\be{28}\AL_k:=\S_{\el\ge k}\AL_{\el,k}\ic\AL+\BL_\lL.\ee
We prove \er{28} by induction over $k\ge 0.$ The case $k=0$ is trivial. For the induction step, let $k>0$ and suppose that
$\AL_h\ic\AL+\BL_\lL$ whenever $h<k.$ This is precisely the assumption \er{20}. We prove that
\be{29}\AL_{\el,k}\ic\AL+\BL_\lL\ee
by induction over $\el\ge k.$ By Claim \er{t} we have $\AL_{k,k}\ic\AL+\BL_\lL.$ For the induction step assume that
$\AL_{\el,k}\ic\AL+\BL_\lL$ for some $\el\ge k$. Passing to $\el+1$, consider polynomials $\lf,\lq$ with $\deg\lf\le\el$ and $\deg\lq=k.$ Then we have for any linear function $u$
$$PT_{\lf\cdot u}^*T_\lq P=PT^*_\lf T_u^*T_\lq P=PT^*_\lf T_\lq PT_u^*P+PT^*_\lf[T_u^*,T_\lq]P.$$
By the induction hypothesis we have $PT_\lf^*T_\lq P\in\AL_{\el,k}\ic\AL+\BL_\lL.$ For the second term, we may assume that
$\lq=v_{k-1}\cdots v_0$ for some linear functions $v_i.$ Then
$$PT_\lf^*[T_u^*,T_\lq]P=\S_{i=0}^{k-1}PT_\lf^*T_{v_{k-1}\cdots v_{i+1}}[T_u^*,T_{v_i}]T_{v_{i-1}\cdots v_0}P.$$
If $p,q,\lx,\lh$ are polynomials of degree $\le i<k$ we have
$$PT_\lf^*T_{v_{k-1}\cdots v_{i+1}}T_pPT_q^*[T_u^*,T_{v_i}]T_\lx PT_\lh^*T_{v_{i-1}\cdots v_0}P\in\AL+\BL_\lL,$$
since the assumption \er{20} implies that $\AL+\BL_\lL$ contains $PT_\lf^*T_{v_{k-1}\cdots v_{i+1}}T_pP\in\AL_{\el,k-1}$ and
$PT_\lh^*T_{v_{i-1}\cdots v_0}P\in\AL_{i,i},$ and Claim \er{s} implies $PT_q^*[T_u^*,T_{v_i}]T_\lx P\in\AL+\BL_\lL.$ Thus
$$PT_\lf^*T_{v_{k-1}\cdots v_{i+1}}A^\la[T_u^*,T_{v_i}]A^\lg T_{v_{i-1}\cdots v_0}P\in\AL+\BL_\lL,$$
whenever $|\la|\le i$ and $|\lg|\le i.$ With Claim \er{w}, it follows that $PT_\lf^*[T_u^*,T_\lq]P\in\AL+\BL_\lL.$ Therefore $\AL_{\el+1,k}\ic\AL+\BL_\lL,$ completing the induction proof of \er{28}.

\section{Smooth extension and Dixmier trace}
Let $\KL$ denote the compact operators. By definition \cite{C2} we have
$$\LL^{n,\oo}:=\{T\in\KL:\,\lm_j(T)=O(j^{-1/n})\}$$
for $n>1,$ and
$$\LL^{1,\oo}:=\{T\in\KL:\,\S_{i=1}^j\lm_i(T)=O(\log j)\}.$$
Here $\lm_1(T)\ge\lm_2(T)\ge\cdots$ are the singular values of $T.$ We will apply these concepts to the Hilbert space $H^2_1(S).$
Using the invariants $a,b$ we put
$$n:=1+a(r-1)+b.$$
Note that $n$ is not the dimension $d=r(1+\f a2(r-1)+b)$ of the underlying domain $D,$ unless $r=1.$ We will give a geometric interpretation below.
\begin{lemma}\label{c} Consider $\lL$ as an unbounded operator on $H^2_1(S)$. Then $(\lL+1)^{-1}\in\LL^{n,\oo}.$
\end{lemma}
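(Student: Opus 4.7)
The plan is to reduce the lemma to an eigenvalue-counting argument via the $K$-isotypic decomposition of $H^2_1(S)$. Since $H^2_1(S)=\S_{m\ge 0}\PL_m(Z)$ and $\lL$ acts on $\PL_m(Z)$ as the scalar $m$, the operator $(\lL+1)^{-1}$ is positive and compact with eigenvalue $\f{1}{m+1}$ of multiplicity $d_m:=\dim\PL_m(Z)$. Hence the task is only to arrange these eigenvalues in decreasing order and determine their decay rate.

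The central step will be to establish the dimension asymptotic
$$d_m \;=\; C\,m^{n-1}+O(m^{n-2}), \qquad m\to\oo,$$
for some $C>0$, where $n-1=a(r-1)+b$. I will extract this from the explicit product formula for $\dim\PL_\ll(Z)$ (see \cite{FK,U3}) specialised to the one-row partition $\ll=(m,0,\ldots,0)$: each nontrivial factor is a polynomial in $m$, and a degree count yields leading order $a(r-1)+b$. The asymptotic may be cross-checked on the two basic examples. For the unit ball ($r=1$, $b=d-1$) one has $d_m=\binom{m+d-1}{d-1}\sim m^{d-1}/(d-1)!$ with $n=d$; for the matrix ball $M_{p,q}(\Cl)$ with $p\le q$, the $K=U(p)\xx U(q)$ isotypic decomposition gives $d_m=\binom{m+p-1}{p-1}\binom{m+q-1}{q-1}\sim m^{p+q-2}/((p-1)!(q-1)!)$ with $n=p+q-1$.

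With the asymptotic in hand, the remainder is routine bookkeeping. Setting $N(M):=\S_{m=0}^M d_m\sim\f{C}{n}M^n$, the $k$-th largest singular value equals $\f{1}{M+1}$ for the unique $M$ with $N(M-1)<k\le N(M)$; inverting $N$ gives $M\sim(nk/C)^{1/n}$, and therefore
$$\lm_k\((\lL+1)^{-1}\)=O(k^{-1/n}),$$
which is precisely the defining estimate of $\LL^{n,\oo}$ for $n>1$. The borderline case $n=1$ only arises for the unit disk, where $d_m\equiv 1$ and $\lm_k=\f{1}{k+1}$, so $\S_{i=1}^j\lm_i=O(\log j)$ and $(\lL+1)^{-1}\in\LL^{1,\oo}$ as well. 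The sole non-routine step will be the dimension asymptotic; once in place, the enumeration-and-inversion argument closes the lemma immediately.
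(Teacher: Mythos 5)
Your proposal is correct and follows essentially the same approach as the paper: the heart of both arguments is the asymptotic $\dim\PL_m(Z)\sim c\,m^{n-1}$ obtained from the explicit dimension formula specialised to one-row partitions, combined with the observation that $(\lL+1)^{-1}$ has eigenvalue $1/(m+1)$ on $\PL_m(Z)$. The only cosmetic difference is the final bookkeeping step — the paper verifies the partial-sum characterization $j^{1/n-1}S_j(T)=O(1)$, while you invert the eigenvalue-counting function to get $\lm_k=O(k^{-1/n})$ directly — and you additionally treat the borderline case $n=1$ (unit disk), which the paper leaves implicit.
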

\begin{proof} For any partition $\ll$ it follows from \cite[Lemma 2.7 and Lemma 2.6]{U1} that
\be{10}\dim\PL_\ll(Z)=\f{(\lr)_\ll}{(\lr-b)_\ll}\P_{1\le i<j\le r}\f{\ll_i-\ll_j+\f a2(j-i)}{\f a2(j-i)}\cdot\f{(\ll_i-\ll_j+1+\f a2(j-i-1))_{a-1}}{(1+\f a2(j-i-1))_{a-1}}.\ee
Specializing \er{10} to $m=(m,0,\ldots,0)$ we obtain
$$\dim\PL_m(Z)=\f{(m+1+\f a2(r-1))_b}{(1+\f a2(r-1))_b}\P_{j=2}^r\f{m+\f a2(j-1)}{\f a2(j-1)}\cdot\f{(m+1+\f a2(j-2))_{a-1}}{(1+\f a2(j-2))_{a-1}}$$
for $m\ge b.$ It follows that asymptotically, we have
$$\dim\PL_m(Z)\sim c\cdot m^{b+a(r-1)}=c\cdot m^{n-1}$$
for some constant $c>0$ independent of $m.$ Since $(\lL+1)^{-1}$ has the eigenvalues $1/(1+m),$ with eigenspace $\PL_m(Z),$ this estimate implies that the partial sum
$$S_j((\lL+1)^{-1})=\S_{i=0}^{j}\lm_i((\lL+1)^{-1})\sim j^{1-1/n},$$
where $\lm_i(T)$ is the $i$-th eigenvalue of $T.$ This implies the assertion since, for $n>1,$ $T\in\LL^{n,\oo}$ iff $\{j^{(1/n-1)}S_j(T):\,j\ge 1\}$ is a bounded sequence \cite{C2}.
\end{proof}
\begin{theorem}\label{a} Let $f,g\in\PL(Z\xx\o Z)$ be real-analytic polynomials. Then $[S_f,S_g]\in\LL^{n,\oo}.$
\end{theorem}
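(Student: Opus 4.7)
The plan is to combine Theorem~\er{mr} with Proposition~\er{aab} in order to force the commutator into the ideal $\BL_\lL,$ and then to verify the inclusion $\BL_\lL\ic\LL^{n,\oo}.$

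First, by Theorem~\er{mr} I would write $S_f=A_f+C_f$ and $S_g=A_g+C_g$ with $A_f,A_g\in\AL$ and $C_f,C_g\in\BL_\lL,$ and expand
$$[S_f,S_g]=[A_f,A_g]+[A_f,C_g]+[C_f,A_g]+[C_f,C_g].$$
Proposition~\er{aab} yields $[A_f,A_g]\in\BL_\lL.$ For the remaining three commutators, Lemma~\er{ba} guarantees that any product of an element of $\AL$ with an element of $\BL_\lL$ stays in $\BL_\lL,$ and that $\BL_\lL$ is closed under multiplication, so each of $[A_f,C_g],\,[C_f,A_g],\,[C_f,C_g]$ lies in $\BL_\lL.$ Hence $[S_f,S_g]\in\BL_\lL.$

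Second, I would prove $\BL_\lL\ic\LL^{n,\oo}.$ Writing a typical element as $A(\lL+1)^{-1}+B$ with $A\in\AL$ and $B\in\BL,$ I would treat the two summands separately. The factor $A$ is bounded, since each generator $S_p=PT_pP$ of $\AL$ is bounded (the symbol $p|_S$ is continuous on the compact Shilov boundary $S$), so Lemma~\er{c} combined with the two-sided ideal property of $\LL^{n,\oo}$ gives $A(\lL+1)^{-1}\in\LL^{n,\oo}.$ For $B\in\BL$ I would write $B=\[B(\lL+1)^2\](\lL+1)^{-2}.$ The very definition of $\BL$ makes $B\lL^2$ bounded, and the grading argument in the proof of Lemma~\er{fp} shows $B\lL$ is bounded too (on $Q_m$ with $m\ge 1$ the inequality $m^2\|Bp\|\le\|B\lL^2\|\,\|p\|$ yields $m\|Bp\|\le\|B\lL^2\|\,\|p\|/m$), so $B(\lL+1)^2$ is bounded. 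A direct count of the eigenvalue multiplicities, using the asymptotic $\dim\PL_m(Z)\sim c\,m^{n-1}$ already employed in the proof of Lemma~\er{c}, shows that the singular values of $(\lL+1)^{-2}$ decay like $j^{-2/n},$ hence $(\lL+1)^{-2}\in\LL^{n,\oo},$ and the ideal property delivers $B\in\LL^{n,\oo}.$

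The bulk of the work is already done: the substance resides in Theorem~\er{mr} (the decomposition $S_f\in\AL+\BL_\lL$) and Proposition~\er{aab} (the commutator identity $[\AL,\AL]\ic\BL_\lL$). The one mildly delicate point left is confirming that the extra factor $(\lL+1)^{-1}$ implicit in the definition of $\BL_\lL$ does not push us out of $\LL^{n,\oo}$; this is precisely where the match between the multiplicity $\dim\PL_m(Z)\sim m^{n-1}$ and the Macaev exponent $n=1+a(r-1)+b$ becomes decisive.
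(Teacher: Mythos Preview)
Your proof is correct and follows essentially the same route as the paper: reduce to $[S_f,S_g]\in\BL_\lL$ via Theorem~\ref{mr}, Proposition~\ref{aab} and Lemma~\ref{ba}, then invoke Lemma~\ref{c} to get $\BL_\lL\ic\LL^{n,\oo}.$ The only cosmetic difference is that the paper handles the inclusion $\BL_\lL\ic\LL^{n,\oo}$ in one stroke by writing $A(\lL+1)^{-1}+B=\bigl(A+B(\lL+1)\bigr)(\lL+1)^{-1}$ with $A+B(\lL+1)$ bounded, whereas you treat the two summands separately and go through $(\lL+1)^{-2}$ for the $B$-term; both are fine.
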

\begin{proof} Let  $A\in\AL,B\in\BL.$ Then Lemma \er{c} implies $A(\lL+1)^{-1}+B \in\LL^{n,\oo}$ since $A+B(\lL+1)$ is bounded. It follows that $\BL_\lL\ic\LL^{n,\oo}.$ Since $[\AL,\AL]\ic\BL_\lL$ by Proposition \er{aab} and $\BL_\lL$ is an $\AL$-bimodule we obtain
$$[\AL+\BL_\lL,\AL+\BL_\lL]\ic\BL_\lL.$$
Since $S_f,S_g\in\AL+\BL_\lL$ by Theorem \er{mr}, the assertion follows.
\end{proof}
It is well known \cite{C2} that $T_i\in\LL^{p_i,\oo}$ and $\S_{i=1}^n\f1{p_i}=1$ implies $T=T_1\cdots T_n\in\LL^{1,\oo}.$ Hence Theorem \er{a} implies
\begin{corollary} Let $f_1,g_1,\ldots f_n,g_n\in\PL(Z\xx\o Z)$ be real-analytic polynomials. Then
\be{33}[S_{f_1},S_{g_1}]\cdots[S_{f_n},S_{g_n}]\in\LL^{1,\oo}.\ee
\end{corollary}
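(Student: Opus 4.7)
The plan is essentially a direct combination of Theorem \er{a} with the Hölder-type inequality for the singular-value ideals $\LL^{p,\oo}$, which is recorded explicitly just before the statement. The substantive input has already been done: Theorem \er{a} guarantees that for each $i = 1, \ldots, n$ the commutator $[S_{f_i}, S_{g_i}]$ belongs to $\LL^{n,\oo}$, where $n = 1 + a(r-1) + b$ is the exponent dictated by the boundary geometry and appearing in Lemma \er{c}.

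Given this, I would set $T_i := [S_{f_i}, S_{g_i}] \in \LL^{n,\oo}$ and invoke the cited fact from \cite{C2}: if $T_i \in \LL^{p_i,\oo}$ with $\sum_{i=1}^n \f1{p_i} = 1$, then $T_1 \cdots T_n \in \LL^{1,\oo}$. Specializing to $p_i = n$ for all $i$, the required identity $\sum_{i=1}^n \f1n = 1$ holds trivially, and we conclude $[S_{f_1}, S_{g_1}] \cdots [S_{f_n}, S_{g_n}] \in \LL^{1,\oo}$, which is exactly \er{33}.

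There is no real obstacle at this stage: all of the analytic work is absorbed into Theorem \er{a} (and through it into Theorem \er{mr} and Proposition \er{aab}), together with the decay estimate $(\lL+1)^{-1} \in \LL^{n,\oo}$ of Lemma \er{c}. The only point worth double-checking in a write-up would be that the exponent $n$ produced in Lemma \er{c} matches the number of commutator factors in the product, so that the Hölder identity $n \cdot \f1n = 1$ actually yields the $\LL^{1,\oo}$ conclusion — which is what makes $\LL^{1,\oo}$ the natural target and opens the way to computing a Dixmier trace in the sequel.
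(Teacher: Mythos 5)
Your proposal is exactly the paper's argument: apply Theorem \er{a} to place each commutator $[S_{f_i},S_{g_i}]$ in $\LL^{n,\oo}$, then invoke the H\"older-type property of Macaev ideals from \cite{C2} with $p_1=\cdots=p_n=n$ to conclude the $n$-fold product lies in $\LL^{1,\oo}$. Nothing further is needed.
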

The trace class $\LL^1$ is a proper subspace of $\LL^{1,\oo}$. For $T\in\LL^{1,\oo}$ the {\bf Dixmier trace}, denoted by
$tr_\lo(T)$, depends a priori on a choice of positive functional $\lo$ on $l^\oo(\Nl)$ vanishing on $c_0(\Nl)$. For the so-called {\bf measurable} operators $T$ the value $tr_\lo(T)$ is independent of $\lo$. More precisely, for a positive operator $T,$
$$tr_\lo(T) =\lim_{j\to\oo}\f1{\log j}{\S_{i=1}^j\lm_i(T)}$$
whenever the limit exists. It also satisfies the tracial property
$$tr_\lo(TS)=tr_\lo(ST)$$
and $tr_\lo(T)=0$ if $T\in\LL^1.$ We refer the reader to \cite{C2} for more details.

In order to determine the Dixmier trace of the operators \er{33} we consider the algebraic variety
$$Z_1^\ol:=\{z\in Z:\,rank(z)\le 1\},$$
which has (complex) dimension $\dim Z_1^\ol=1+a(r-1)+b=n,$ and is singular only at the origin.
\begin{proposition} Consider the polynomial ideal $\IL(Z_1^\ol)\ic\PL(Z)$ vanishing on $Z_1^\ol.$ Then the sub-Hardy space $H^2_1(S)$ can be identified with the Hilbert quotient module
$$H^2_1(S)=H^2(S)/\o{\IL(Z_1^\ol)}\al\o{\IL(Z_1^\ol)}^\perp.$$
\end{proposition}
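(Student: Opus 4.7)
The plan is to decompose the ideal $\IL(Z_1^\ol)$ via the Peter-Weyl decomposition \er{1} and identify precisely which isotypic components it contains, then read off the orthogonal complement.

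First I would observe that $Z_1^\ol$ is $K$-invariant (since $K$ preserves rank), so the ideal $\IL(Z_1^\ol)\cap\PL(Z)$ is a $K$-invariant subspace of $\PL(Z)$. Combined with the multiplicity-free Peter-Weyl decomposition \er{1} and the irreducibility of each $\PL_\ll(Z)$ under $K$, this forces
$$\IL(Z_1^\ol)\cap\PL(Z)=\S_{\ll\in E}\PL_\ll(Z)$$
for some index set $E\ic\Nl^r_+$, where $\ll\in E$ iff $\PL_\ll(Z)$ consists entirely of polynomials vanishing on $Z_1^\ol$.

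Next I would identify $E$ by testing on the highest weight vector $N_\ll$. Because $K$ acts transitively on the set of rank-$1$ elements of any fixed norm and $Z_1^\ol$ is the closure of the $K$-orbit of $\Cl\,e_1$, a $K$-invariant subspace lies in $\IL(Z_1^\ol)$ iff its highest weight vector vanishes on $\{s\,e_1:s\in\Cl\}$. Using formula \er{2},
$$N_\ll(s\,e_1)=N_1(s\,e_1)^{\ll_1-\ll_2}N_2(s\,e_1)^{\ll_2-\ll_3}\cdots N_r(s\,e_1)^{\ll_r},$$
together with $N_1(s\,e_1)=s$ and $N_k(s\,e_1)=0$ for $k\ge 2$ (since the Jordan determinant of the Peirce $2$-space of rank $k\ge 2$ annihilates any element of rank $\le 1$), one gets $N_\ll(s\,e_1)=s^{\ll_1}$ if $\ll$ has length $1$, and $N_\ll(s\,e_1)=0$ otherwise (using that when the length $\el\ge 2$, the exponent $\ll_\el-\ll_{\el+1}$ of $N_\el$ is strictly positive). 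Hence $E=\{\ll\in\Nl^r_+:\,\ll\mbox{ has length}\ge 2\}$, so
$$\IL(Z_1^\ol)\cap\PL(Z)=\S_{\el(\ll)\ge 2}\PL_\ll(Z).$$

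Finally I would pass to the $L^2(S)$-closure. By \er{30} the Peter-Weyl decomposition \er{1} is orthogonal in the $S$-inner product as well as in the Fischer-Fock product. Since polynomials are dense in $H^2(S)$, taking closure yields the Hilbert orthogonal sum
$$\o{\IL(Z_1^\ol)}=\bigoplus_{\el(\ll)\ge 2}\PL_\ll(Z),$$
whose orthogonal complement in $H^2(S)$ is $\oplus_m\PL_m(Z)=H^2_1(S)$. This gives both the identification $H^2_1(S)\al\o{\IL(Z_1^\ol)}^\perp$ and, via the canonical isomorphism between a closed subspace and the quotient by its orthogonal complement, the identification with $H^2(S)/\o{\IL(Z_1^\ol)}$. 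The only step requiring genuine care is the vanishing computation for $N_\ll$ on rank-$1$ elements; everything else is standard representation-theoretic bookkeeping with the already-established orthogonality \er{30}.
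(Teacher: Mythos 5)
Your argument is correct and follows essentially the paper's route: both exploit the $K$-invariance of $Z_1^\ol$ and the multiplicity-free decomposition \er{1} to write $\IL(Z_1^\ol)$ as a sum of isotypic components $\PL_\ll(Z)$, identify the relevant $\ll$ (those with $\ll_2>0$) via the factorisation \er{2} and the vanishing of $N_\el$, $\el\ge 2$, on rank-$\le 1$ elements, and then pass to $L^2(S)$-closures using the orthogonality implicit in \er{30}. The paper packages the identification of the index set as a contradiction argument based on irreducibility together with an explicit Schur-orthogonality formula proving $P_\ll$-invariance of the ideal, while you deduce the same invariance abstractly from complete reducibility plus multiplicity-freeness and read off the index set directly; the underlying ingredients coincide.

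One caveat worth noting: your stated criterion ``a $K$-invariant subspace lies in $\IL(Z_1^\ol)$ iff its highest weight vector vanishes on $\Cl\,e_1$'' is true here, but the ``if'' direction is not a formal consequence of $K$-transitivity and $Z_1^\ol=\o{K\cdot\Cl e_1}$ as phrased: vanishing of $N_\ll$ on $\Cl e_1$ does not a priori force the entire evaluation functional at $e_1$ to vanish on $\PL_\ll(Z)$. The correct backward reasoning, which your parenthetical already contains, is that for $\ll$ of length $\ge 2$ the factor $N_\el^{\ll_\el}$, $\el\ge 2$, in \er{2} makes $N_\ll$ vanish on the whole $K$-invariant variety $Z_1^\ol$ (not merely on the slice $\Cl e_1$), whence $\PL_\ll(Z)$, being the linear span of the $K$-orbit of $N_\ll$, is contained in $\IL(Z_1^\ol)$. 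With that reformulation the proof is complete.
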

\begin{proof} It suffices to show that $\IL(Z_1^\ol)$ coincides with the ideal
$$\JL=\bigoplus_{\ll_2>0}\PL_\ll(Z)\ic\PL(Z).$$
For $z\in Z_1^\ol$, we have $N_\el(z)=0$ for $\el\ge 2$ since $rank(z)\le 1.$ This implies that $\JL\ic\IL(Z_1^\ol).$ By Schur orthogonality the orthogonal projection $P_\ll$ is given by
$$\f1{d_\ll}P_\ll\,f=\S_\la\I_{K}dk\,(\lf_\la|k\cdot\lf_\la)\,(k^{-1}\cdot f)$$
for all $f\in\PL(Z),$ where $\lf_\la\in\PL_\ll(Z)$ is an orthonormal basis. It follows that the $K$-invariant ideal $\IL(Z_1^\ol)$
is invariant under all $P_\ll.$ Now suppose there exists $f\in\IL(Z_1^\ol)\sm\JL.$ Then $f=f'+f'',$ where $f''\in\JL$ and $f'\in\JL^\perp=\bigoplus_{m}\PL_m(Z)$ is non-zero. Since $\JL\ic\IL(Z_1^\ol)$ we may assume $f=f'.$ By the above, we may assume that
$f\in\PL_m(Z)$ for some $m\ge 0.$ By irreducibiliy, it follows that $\PL_m(Z)\ic\IL(Z_1^\ol),$ which is a contradiction since
$N_1^m\notin\IL(Z_1^\ol).$
\end{proof}
The unit ball $D\ui Z_1^\ol$ of $Z_1^\ol$ is a {\bf strictly pseudo-convex domain} (singular at the origin), with a $K$-homogeneous smooth boundary $S_1=\{c:\,\{cc^*c\}=c,\,rank(c)=1\}$ consisting of all minimal tripotents. Denote by $L^2(S_1)$ the $L^2$-space with respect to the $K$-invariant measure. The Hardy space $H^2(S_1)$ is the closure of the algebra $\PL(Z)$ of all polynomials on $Z,$ restricted to $S_1.$ Since $N_\el|_{S_1}=0$ for each $\el\ge 2$, it follows that
$$H^2(S_1)=\S_{m\ge 0}\PL_m^\sim(Z),$$
where $\t f=f|_{S_1}$ denotes the restriction.
\begin{lemma} Let $p,q\in\PL_m(Z).$ Then
$$(p|q)_S=\f{(ra/2)_m}{(a/2)_m}(\t p|\t q)_{S_1}.$$
Hence the transformation $U:H^2_1(S)\to H^2(S_1),$ defined by
$$Up:=\F{\f{(ra/2)_m}{(a/2)_m}}\t p\qquad\forall\,p\in\PL_m(Z),$$
is unitary.
\end{lemma}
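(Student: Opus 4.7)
The approach is to invoke Schur's lemma on the $K$-irreducible module $\PL_m(Z)$: both $(p|q)_S$ and $(\t p|\t q)_{S_1}$ are $K$-invariant Hermitian forms on this irreducible summand of the multiplicity-free decomposition \er{1}, so they must be proportional, and the whole content of the lemma is the explicit value of the constant.

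First I would verify that $(\t p|\t q)_{S_1}$ is genuinely a positive-definite inner product on $\PL_m(Z),$ i.e.\ that the restriction map $p\mapsto\t p$ is injective. Since $N_1(e_1)=1,$ the restriction $\t N_1^m$ does not vanish, so the kernel of restriction is a proper $K$-invariant subspace of $\PL_m(Z),$ which must be trivial by irreducibility. Schur's lemma then produces a positive constant $c_m$ with $(p|q)_S=c_m\,(\t p|\t q)_{S_1}$ for all $p,q\in\PL_m(Z).$

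Next I pin down $c_m$ by evaluating on the highest-weight vector $N_1^m.$ On the $S$ side, Proposition \er{nn} with $\ll=(m,0,\ldots,0)$ simplifies dramatically: only the pairs $(i,j)=(1,j)$ for $j=2,\ldots,r$ yield nontrivial Pochhammer factors, and these telescope (using $\lr-b=1+\f a2(r-1)$) to give $\|N_1^m\|_S^2=m!/(\lr)_m.$ On the $S_1$ side, $N_1$ restricted to the Peirce-$2$ line $\Cl e_1$ is just the coefficient map $z\mapsto(z|e_1),$ so by $K$-transitivity on $S_1=K/K_{e_1},$
\[\|\t N_1^m\|_{S_1}^2=\I_K dk\,|(ke_1|e_1)|^{2m}.\]
This orbit integral is then computed by a polar decomposition compatible with the Peirce decomposition of $Z$ relative to $e_1,$ yielding $m!\,(a/2)_m/[(\lr)_m(ra/2)_m].$ Sanity checks: for $r=1$ this recovers $\|N_1^m\|_S^2$ itself since $S=S_1$ and the ratio is $1,$ and for rank-$2$ symmetric matrices $(a=1,b=0)$ one obtains the familiar $1/(2m+1).$ Dividing the two norms yields $c_m=(ra/2)_m/(a/2)_m,$ as claimed.

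Unitarity of $U$ is then immediate: the map $Up=\sqrt{(ra/2)_m/(a/2)_m}\,\t p$ is an isometry $\PL_m(Z)\to\PL_m^\sim(Z),$ and since $H^2_1(S)=\bigoplus_m\PL_m(Z)$ and $H^2(S_1)=\bigoplus_m\PL_m^\sim(Z)$ are both orthogonal decompositions, taking the direct sum produces a unitary $H^2_1(S)\to H^2(S_1).$ The main technical obstacle is the orbit-integral evaluation of $\|\t N_1^m\|_{S_1}^2,$ where one has to track how the Jordan invariants $a$ and $b$ enter through the full Peirce decomposition of $Z$ along $e_1$ rather than just the rank-one slice $\Cl e_1$; everything else is a formal consequence of Schur's lemma and the Pochhammer telescoping.
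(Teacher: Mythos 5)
Your Schur-lemma reduction is sound and in fact parallels the mechanism the paper uses: both $(p|q)_S$ and $(\t p|\t q)_{S_1}$ are $K$-invariant sesquilinear forms on the irreducible module $\PL_m(Z)$, so proportionality is automatic, and the injectivity of restriction via irreducibility (to rule out degeneracy) is correctly handled. Your telescoping of Proposition~\ref{nn} for $\ll=(m,0,\ldots,0)$, giving $\|N_1^m\|_S^2=m!/(\lr)_m$, is also correct.

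The gap is exactly where you flag it: the claimed value $\|\t N_1^m\|_{S_1}^2=m!\,(a/2)_m/[(\lr)_m(ra/2)_m]$, equivalently the orbit integral $\int_K|(ke_1|e_1)|^{2m}\,dk$, is not actually derived. Invoking ``a polar decomposition compatible with the Peirce decomposition'' names a plausible method but supplies no computation, and the two sanity checks ($r=1$ and type~III with $a=1,b=0$) confirm consistency but are not a proof for general $a,b,r$. Since the whole content of the lemma is precisely the evaluation of this one constant, the argument as written is incomplete. The paper avoids a direct polar integration altogether: it combines Schur orthogonality of the matrix coefficients $k\mapsto\EL^\ll(z,k\sqrt t)$ with the Faraut--Kor\'anyi relation $\EL^\ll(t,e)/d_\ll=\lf^\ll(t)/(d/r)_\ll$, and then plugs in the known value $\lf^\ll(e_1+\cdots+e_\el)=(\el a/2)_\ll/(ra/2)_\ll$ from~\cite[Proposition~3.7]{AU}, applied at $\el=r,t=e$ and $\el=1,t=e_1$. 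In your framework that is exactly the ingredient that pins down $\|\t N_1^m\|_{S_1}^2$; citing or reproving that spherical-polynomial evaluation is what is needed to close the argument.
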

\begin{proof} Let $X$ be the self-adjoint part of the Peirce $2$-space $Z^2_e$ of full rank $r$ \cite{FK}. For any partition $\ll\in\Nl^r_+$, the associated {\bf spherical polynomial} $\lf^\ll$ on $X\ic Z,$ normalized by $\lf^\ll(e)=1$ \cite{FK}, is given by
$$\f{\EL^\ll(t,e)}{d_\ll}=\f{\lf^\ll(t)}{(d/r)_\ll}$$
for all $t\in X$, where $d_\ll:=\dim\,\PL_\ll(Z)$ and $\EL^\ll(z,w)$ is the Fischer-Fock reproducing kernel for $\PL_\ll(Z).$ Now suppose $\ll\in\Nl^\el_+$. Then \cite[Proposition 3.7]{AU} implies
$$\lf^\ll(e_1+\ldots+e_\el)=\f{(\el a/2)_\ll}{(r a/2)_\ll}$$
and hence
$$\lf^\ll(t)=\f{(\el a/2)_\ll}{(r a/2)_\ll}\lf^\ll_\el(t)$$
for all $t\in X_\el\ic X$, where $\lf^\ll_\el$ is the spherical polynomial for the self-adjoint part $X_\el$ of the Peirce $2$-space
$Z^2_{e_1+\ldots e_\el}.$ Let $\lO_\el\ic X_\el$ be the strictly positive cone, and let $t\in\lO_\el$ be fixed. By Schur orthogonality \cite[Theorem 14.3.3]{D}, we have
$$\I_K dk\,\EL^\ll(z,\,k\F{t})\EL^\lm(k\F{t},w)=\f{\ld_{\ll,\lm}}{d_\ll}\EL^\ll(k\F{t},k\F{t})\,\EL^\ll(z,w)$$
$$=\f{\ld_{\ll,\lm}}{(d/r)_\ll}\lf^\ll(t)\,\EL^\ll(z,w)=\f{\ld_{\ll,\lm}}{(d/r)_\ll}\f{(\el a/2)_\ll}{(r a/2)_\ll}
\lf^\ll_\el(t)\EL^\ll(z,w).$$
for all $\ll,\lm\in\Nl^\el_+$ and $z,w\in Z.$ Applying this identity to $\el=r,t=e$ and $\el=1,t=e_1,$ resp., the assertion follows.
\end{proof}
Define $\t\lL\t p=m\t p$  for  $p\in\PL_m(Z)$. Then Lemma \er{c} gives
$$\f1{1+\t\lL}\in\LL^{n,\oo}.$$
Let $\t T_f$ denote the Toeplitz operators $H^2(S_1).$ Then $\t T_u\t\lL=(\t\lL-1)\t T_u$ and $\t T_u^*\t\lL=(\t\lL+1)\t T_u^*.$
\begin{proposition}\label{ss} Let $u,v\in Z.$ Then $U S_u U^*-\t T_u\in\LL^{n,\oo}$ and
$$U[S_u,S_v^*]U^*-[\t T_u,\t T_v^*]\in \LL^{n/2,\oo}.$$
\end{proposition}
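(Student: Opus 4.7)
The plan is to compute $U S_u U^*$ explicitly as the product of $\t T_u$ with a scalar diagonal operator on $H^2(S_1),$ and then read off both assertions from the asymptotics of that diagonal.

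First I would take $\t p\in\PL_m^\sim(Z)$ with lift $p\in\PL_m(Z),$ so that $U^*\t p=c_m^{-1}p$ with $c_m:=\F{(ra/2)_m/(a/2)_m}.$ Since $S_u=PT_uP$ and $P$ restricted to $\PL_m(Z)$ equals $P_m,$ we have $S_uU^*\t p=c_m^{-1}P_{m+1}((z|u)p).$ By the Pieri rule (cf.\ Lemma \er{z} with $\el=1$), $(z|u)p$ has exactly two isotypic components, one in $\PL_{m+1}(Z)$ and one in $\PL_{m,1,0,\ldots,0}(Z).$ The second lies in the ideal $\IL(Z_1^\ol)$ and hence restricts to zero on $S_1,$ so $\w{P_{m+1}((z|u)p)}=\w{(z|u)p}=(z|u)|_{S_1}\,\t p=\t T_u\t p,$ the last equality using that $(z|u)\t p$ already belongs to $\PL_{m+1}^\sim(Z)\ic H^2(S_1).$ Applying $U$ yields $U S_uU^*\t p=(c_{m+1}/c_m)\,\t T_u\t p,$ so $U S_uU^*=D\,\t T_u$ where $D$ is the diagonal operator on $H^2(S_1)$ with $D|_{\PL_k^\sim(Z)}=c_k/c_{k-1}.$

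From $c_k^2/c_{k-1}^2=(ra/2+k-1)/(a/2+k-1)$ one gets $c_k/c_{k-1}=1+(r-1)a/(4k)+O(1/k^2),$ hence $D-I=\f{(r-1)a/4}{\t\lL+1}+R$ where the diagonal remainder $R$ has entries $O(1/k^2).$ Lemma \er{c} applied to $\t\lL$ gives $(\t\lL+1)^{-1}\in\LL^{n,\oo}$ and $(\t\lL+1)^{-2}\in\LL^{n/2,\oo},$ so $D-I\in\LL^{n,\oo}$ and $R\in\LL^{n/2,\oo}.$ Therefore $US_uU^*-\t T_u=(D-I)\t T_u\in\LL^{n,\oo},$ proving the first claim; taking adjoints also gives $US_v^*U^*-\t T_v^*=\t T_v^*(D-I)\in\LL^{n,\oo}.$

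Setting $E_u:=(D-I)\t T_u$ and $F_v:=\t T_v^*(D-I)$ I would expand
$$U[S_u,S_v^*]U^*-[\t T_u,\t T_v^*]=[E_u,\t T_v^*]+[\t T_u,F_v]+[E_u,F_v].$$
The term $[E_u,F_v]$ is a commutator of two operators in $\LL^{n,\oo}$ and therefore lies in $\LL^{n/2,\oo}$ by H\"older. Decomposing $[E_u,\t T_v^*]=(D-I)[\t T_u,\t T_v^*]+[D-I,\t T_v^*]\t T_u,$ the first piece combines $D-I\in\LL^{n,\oo}$ with the Toeplitz commutator $[\t T_u,\t T_v^*]\in\LL^{n,\oo}$ on the strictly pseudo-convex domain $D\ui Z_1^\ol$ of complex dimension $n,$ whose product lies in $\LL^{n/2,\oo}$ by H\"older. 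For the second piece a direct calculation on $\PL_{k+1}^\sim(Z)$ gives $[D-I,\t T_v^*]=(c_k/c_{k-1}-c_{k+1}/c_k)\,\t T_v^*,$ and the expansion above yields $c_k/c_{k-1}-c_{k+1}/c_k=O(1/k^2),$ so $[D-I,\t T_v^*]\in\LL^{n/2,\oo}.$ The term $[\t T_u,F_v]$ is treated symmetrically. The main obstacle is the second-order bookkeeping needed to push past the $\LL^{n,\oo}$ estimate of the first assertion, together with invoking the standard Toeplitz commutator bound $[\t T_u,\t T_v^*]\in\LL^{n,\oo}$ on the singular cone $Z_1^\ol.$
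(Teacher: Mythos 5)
Your proof is correct and follows essentially the same route as the paper: both establish the exact formula $US_uU^*=\t T_u\sqrt{(\t\lL+ra/2)/(\t\lL+a/2)}$ (your $D\t T_u$ is the same operator after commuting the diagonal through $\t T_u$), both invoke $[\t T_u,\t T_v^*]\in\LL^{n,\oo}$ on the singular strictly pseudo-convex domain and $(\t\lL+1)^{-1}\in\LL^{n,\oo}$ from Lemma \ref{c}, and both reduce the second assertion to showing that the difference involves a product of two $\LL^{n,\oo}$ operators plus a second-order diagonal remainder. The only difference is bookkeeping: you expand $U[S_u,S_v^*]U^*$ perturbatively as $[\t T_u+E_u,\t T_v^*+F_v]$ and treat the four resulting commutators, whereas the paper pushes the diagonal factors through with the shift relations $\t T_u\t\lL=(\t\lL-1)\t T_u,\ \t T_u^*\t\lL=(\t\lL+1)\t T_u^*$ and reads off the two explicit remainder terms $\f{(r-1)a/2}{\t\lL+a/2}[\t T_u,\t T_v^*]$ and $\f{(r-1)a/2}{(\t\lL+a/2-1)(\t\lL+a/2)}\t T_u\t T_v^*$ directly.
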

\begin{proof} For each $p\in\PL_m(Z)$ we have
\begin{eqnarray*}US_u U^*\t p&=&U^*PT_u\(\F{\f{(a/2)_m}{(ra/2)_m}}p\)=\F{\f{(a/2)_m}{(ra/2)_m}}U^*P(up)\\
&=&\F{\f{(a/2)_m}{(ra/2)_m}}\F{\f{(ra/2)_{m+1}}{(a/2)_{m+1}}}\w{P(up)}=\F{\f{ra/2+m}{a/2+m}}\t u\t p=\t T_u\F{\f{\t \lL+ra/2}{\t \lL+a/2}}\t p.\end{eqnarray*}
Thus we have $U S_u U^*=\t T_u\F{\f{\t\lL+ra/2}{\t\lL+a/2}}.$ This implies the first assertion. For the second assertion
\begin{eqnarray*}U[S_u,S_v^*]U^*&=&\[\t T_u\F{\f{\t\lL+ra/2}{\t\lL+a/2}},\F{\f{\t\lL+ra/2}{\t\lL+a/2}}\t T_v^*\]
=\t T_u{\f{\t\lL+ra/2}{\t\lL+a/2}}\t T_v^*-\F{\f{\t\lL+ra/2}{\t\lL+a/2}}\t T_v^*\t T_u\F{\f{\t\lL+ra/2}{\t\lL+a/2}}\\
&=&{\f{\t\lL+ra/2-1}{\t\lL+a/2-1}}\t T_u\t T_v^*-{\f{\t\lL+ra/2}{\t\lL+a/2}}\t T_v^* \t T_u
={\f{\t\lL+ra/2}{\t\lL+a/2}}[\t T_u,\t T_v^*]+ \[{\f{\t\lL+ra/2-1}{\t\lL+a/2-1}}-{\f{\t\lL+ra/2}{\t\lL+a/2}}\]\t T_u\t T_v^*.\end{eqnarray*}
Therefore
\begin{eqnarray*}U[S_u,S_v^*]U^*-[\t T_u,\t T_v^*]&=&\(\f{\t\lL+ra/2}{\t\lL+a/2}-1\)[\t T_u,\t T_v^*]
+\({\f{\t\lL+ra/2-1}{\t\lL+a/2-1}}-{\f{\t\lL+ra/2}{\t\lL+a/2}}\)\t T_u\t T_v^*\\
&=&\f{(r-1)a/2}{\t\lL+a/2}[\t T_u,\t T_v^*]+{\f{(r-1)a/2}{(\t\lL+a/2-1)(\t\lL+a/2)}}\t T_u\t T_v^*\in\LL^{n/2,\oo}\end{eqnarray*}
since $[\t T_u,\t T_v^*]$ (cf. \cite{EZ}) and $(\t\lL+a/2)^{-1}$ belong to $\LL^{n,\oo}.$
\end{proof}
To consider general symbols, we need the following algebraic lemma.
\begin{lemma}\label{sa} Suppose that the given operators $A_i,\t A_j$ satisfy that $A_i-\t A_i,\,[A_i, A_j],\,[\t A_i,\t A_j]\in\LL^{n,\oo}$ and $[A_i, A_j]-[\t A_i,\t A_j]\in\LL^{n/2,\oo}$ for $1\leq i,j\leq 4.$ Then
$$[A_1A_2,A_3A_4]-[\t A_1\t A_2,\t A_3\t A_4]\in\LL^{n/2,\oo}.$$
\end{lemma}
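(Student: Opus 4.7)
My plan is to prove this purely by an algebraic Leibniz expansion combined with the H\"older-type multiplication rule for the Macaev ideals, so no analysis beyond the standard properties of $\LL^{p,\oo}$ quoted from \cite{C2} will enter.

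First I would expand via the derivation identity
\[ [A_1 A_2,\,A_3 A_4]=[A_1,A_3]A_2 A_4+A_1[A_2,A_3]A_4+A_3[A_1,A_4]A_2+A_3 A_1[A_2,A_4], \]
and the analogous identity for the $\t{A}_i$. Subtracting reduces the statement to showing that each of the four differences $A_1[A_2,A_3]A_4-\t{A}_1[\t{A}_2,\t{A}_3]\t{A}_4$ (and its three siblings) lies in $\LL^{n/2,\oo}$. To each of these I apply the standard three-term telescoping
\[ XYZ-\t{X}\t{Y}\t{Z}=(X-\t{X})YZ+\t{X}(Y-\t{Y})Z+\t{X}\t{Y}(Z-\t{Z}), \]
in which $Y$ is always a commutator of two of the $A_i$ and $X,Z$ are bounded products of the remaining ones (or a single $A_i$ in the corner cases). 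This produces twelve summands in total.

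The third step is to dispose of all twelve summands using two standard facts \cite{C2}: $\LL^{p,\oo}$ is a two-sided ideal in the bounded operators, and $\LL^{p,\oo}\cdot\LL^{q,\oo}\ic\LL^{r,\oo}$ whenever $1/p+1/q=1/r$. The first telescoped piece has the shape $(X-\t{X})\cdot[\cdot,\cdot]\cdot(\text{bounded})$, with both $X-\t{X}$ and the commutator in $\LL^{n,\oo}$, so the product is in $\LL^{n/2,\oo}$. The middle piece uses the hypothesis $[A_i,A_j]-[\t{A}_i,\t{A}_j]\in\LL^{n/2,\oo}$ directly, flanked by bounded factors, and so is automatically in $\LL^{n/2,\oo}$. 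The last piece mirrors the first with $Z-\t{Z}$ and $\t{Y}$ playing the roles of $X-\t{X}$ and $Y$.

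I do not anticipate any serious obstacle; the argument is pure bookkeeping, and the only conceptual point is that bounded multipliers alone cannot promote an $\LL^{n,\oo}$ element to $\LL^{n/2,\oo}$, so the hypothesis on the commutator \emph{differences} is precisely what is needed to handle the middle term of each telescoping---the outer two terms land in $\LL^{n/2,\oo}$ via the product rule applied to the two $\LL^{n,\oo}$ factors $A_i-\t{A}_i$ and $[A_j,A_k]$ (or $[\t{A}_j,\t{A}_k]$).
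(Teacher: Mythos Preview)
Your argument is correct. The Leibniz expansion you write down is valid (one checks by expanding and cancelling), the three-term telescoping is the natural way to compare each summand with its tilded counterpart, and the two facts you invoke---$\LL^{p,\oo}$ is a two-sided ideal in the bounded operators and $\LL^{n,\oo}\cdot\LL^{n,\oo}\ic\LL^{n/2,\oo}$---are exactly what is needed. The only implicit assumption is that the $A_i$ and $\t A_i$ are bounded, so that products such as $A_2A_4$ make sense and $A_2A_4-\t A_2\t A_4=(A_2-\t A_2)A_4+\t A_2(A_4-\t A_4)\in\LL^{n,\oo}$; this is harmless since in the paper's application the operators are $U S_u U^*$, $\t T_u$ and their adjoints, all bounded.

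As for comparison with the paper: the paper states this as an ``algebraic lemma'' and gives no proof at all, so your write-up is precisely the routine verification the authors omit. There is nothing to compare---your approach is the expected one.
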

\begin{corollary}
For polynomials $p,q,\lf,\lq$, we have
$$U[S^*_p S_q,S^*_\lf S_\lq]U^*-[\t T_{\o p q},\t T_{\o\lf\lq}]\in\LL^{n/2,\oo}.$$
\end{corollary}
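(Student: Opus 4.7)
The plan is to reduce the polynomial case to the linear-symbol case handled in Proposition \er{ss} and then iterate Lemma \er{sa}. By multilinearity in each of $p,q,\lf,\lq$ I may assume $p=u_1\cdots u_k$, $q=v_1\cdots v_\el$, $\lf=x_1\cdots x_s$, $\lq=y_1\cdots y_t$ are products of linear holomorphic polynomials. The identity $S_aS_b=S_{ab}$ on $H^2_1(S)$ (valid because $PT_bP^\perp=0$ for holomorphic $b$), together with the corresponding $\t T_a\t T_b=\t T_{ab}$ on $H^2(S_1)$, then factors
$$US_p^*S_qU^*=(US_{u_k}^*U^*)\cdots(US_{u_1}^*U^*)(US_{v_1}U^*)\cdots(US_{v_\el}U^*),\qquad\t T_{\o p q}=\t T_{u_k}^*\cdots\t T_{u_1}^*\t T_{v_1}\cdots\t T_{v_\el},$$
and analogously on the $\lf,\lq$ side.

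For each linear $w\in Z$ write $A_w:=US_wU^*$ and $\t A_w:=\t T_w$. Proposition \er{ss} supplies $A_w-\t A_w\in\LL^{n,\oo}$ and the mixed-commutator estimate $[A_w^*,A_{w'}]-[\t A_w^*,\t A_{w'}]\in\LL^{n/2,\oo}$. Theorem \er{a} gives $[A_w^*,A_{w'}]\in\LL^{n,\oo}$, and the standard Hardy-space analysis on the strictly pseudo-convex domain $D\ui Z_1^\ol$ \cite{EZ} yields $[\t A_w^*,\t A_{w'}]\in\LL^{n,\oo}$. All same-type commutators $[A_w,A_{w'}]$, $[A_w^*,A_{w'}^*]$, $[\t A_w,\t A_{w'}]$, $[\t A_w^*,\t A_{w'}^*]$ vanish identically, since holomorphic (respectively antiholomorphic) sub-Toeplitz or Toeplitz operators form a commutative algebra. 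These are precisely the pairwise hypotheses of Lemma \er{sa} for the linear-symbol inputs.

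Since Lemma \er{sa} handles only two-factor products on each side, I would extend it to arbitrary length by induction via the Leibniz expansion
$$[X_1\cdots X_N,Y_1\cdots Y_M]=\S_{i,j}X_1\cdots X_{i-1}Y_1\cdots Y_{j-1}[X_i,Y_j]Y_{j+1}\cdots Y_MX_{i+1}\cdots X_N$$
and the analogous identity for the $\t X,\t Y$ version. Subtract and telescope each outer replacement $X_h\mapsto\t X_h$, $Y_h\mapsto\t Y_h$ one factor at a time: each outer error is a bounded product containing an $\LL^{n,\oo}$ single-operator difference together with a surviving $\LL^{n,\oo}$ inner commutator, hence lies in $\LL^{n/2,\oo}$ by $\LL^{n,\oo}\cdot\LL^{n,\oo}\ic\LL^{n/2,\oo}$. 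The central replacement error $[X_i,Y_j]-[\t X_i,\t Y_j]$ is already in $\LL^{n/2,\oo}$ by the linear-symbol input above (with same-type $(i,j)$-pairs contributing zero on both sides). Summing over $(i,j)$ yields the desired membership.

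The principal obstacle is the combinatorial bookkeeping of the iterated Leibniz expansion, ensuring that every residual term falls visibly in $\LL^{n/2,\oo}$. This hinges on the observation that the only nonvanishing inner commutators involve one holomorphic and one antiholomorphic linear factor, and these are the pairs governed directly by Proposition \er{ss}; all remaining contributions either cancel by the vanishing of same-type commutators or are absorbed into $\LL^{n/2,\oo}$ by the Macaev product rule.
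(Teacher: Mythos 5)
Your proposal is correct and is essentially the (unspoken) content behind the paper's one-line proof ``Apply Lemma~4.11 and Proposition~4.9.'' The one thing you add that the paper leaves implicit is worth flagging: a single, literal application of Lemma~4.11 to the four factors $US_p^*U^*,US_qU^*,US_\lf^*U^*,US_\lq U^*$ would require the commutator-difference hypothesis $[A_i,A_j]-[\t A_i,\t A_j]\in\LL^{n/2,\oo}$ for arbitrary polynomial symbols, while Proposition~4.9 only furnishes it for linear ones; your iterated Leibniz expansion and factor-by-factor telescoping is exactly the extension of Lemma~4.11 to products of arbitrary length that closes this gap, so the details you supply are genuinely needed rather than redundant.
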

\begin{proof} Apply Lemma \er{sa} and Proposition \er{ss}.
\end{proof}
Every $f\in\CL^\oo(S)$ has a {\bf Poisson integral extension} $\h f\in\CL^\oo(D),$ which is harmonic in the sense that it is annihilated by the so-called Hua operators \cite{K,S1}. For any non-zero tripotent $c\in S_k$ there exists a continuous extension, again denoted by $\h f,$ onto the boundary component $c+D_c.$ This extension is given by
$$\h f(c+\lz)=f_c^\yi(\lz)$$
for all $\lz\in D_c^0,$ where $f_c^\yi$ denotes the Poisson extension, relative to the Shilov boundary $S_c$ of $D_c,$ for the restricted smooth function
$$f_c(\lz):=f(c+\lz),\qquad \lz\in S_c.$$
Setting $\lz=0$ the Poisson extension $\h f$ is well-defined on $S_k.$
\begin{lemma}\label{zz} For all $c\in S_k$ we have
$$\widehat{\o p q}(c)=(p_c|q_c)_{S_c}.$$
\end{lemma}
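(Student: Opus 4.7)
The plan is to unwind two successive definitions: first that $\h f(c) = f_c^\yi(0)$ for the restricted function $f_c(\lz) = f(c+\lz)$, and second that the Poisson integral of a smooth function on the Shilov boundary of a bounded symmetric domain, evaluated at the origin, coincides with the invariant integral of that function.

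First, I would specialize to $f = \o p\, q$. Since $p,q\in\PL(Z)$, the translated polynomials $p_c(\lz) := p(c+\lz)$ and $q_c(\lz) := q(c+\lz)$ are polynomials on $Z^0_c$ (via the Peirce decomposition at $c$), so for $\lz\in S_c$ one has
$$f_c(\lz) = \o{p(c+\lz)}\,q(c+\lz) = \o{p_c(\lz)}\,q_c(\lz).$$
This is the only genuine computation; everything else is structural.

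Second, I would invoke the mean value property for the Poisson extension at the origin of $D_c$:
$$f_c^\yi(0) = \I_{S_c} f_c(s)\,ds_c(s),$$
where $ds_c$ is the $K_c$-invariant probability measure on $S_c$. This is standard for bounded symmetric domains: the origin is fixed by the isotropy group $K_c$, and $K_c$ acts transitively on $S_c$, so the Poisson kernel $P(0,\cdot)$ must be $K_c$-invariant; the normalization $\I_{S_c} P(0,s)\,ds_c(s) = 1$ then forces $P(0,\cdot)\equiv 1$. Combining with the first step gives
$$\h f(c) = f_c^\yi(0) = \I_{S_c}\o{p_c(s)}\,q_c(s)\,ds_c(s) = (p_c|q_c)_{S_c}.$$

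There is no real obstacle here; the lemma is essentially a bookkeeping statement packaging the Poisson mean-value property together with the definition of $\h{(\cdot)}$ on $S_k$. The only point to check carefully is that the measure $ds_c$ used to define the inner product $(\cdot|\cdot)_{S_c}$ is the same $K_c$-invariant probability measure that represents evaluation at the origin of $D_c$ — which is the case by the normalization conventions established at the beginning of Section~1.
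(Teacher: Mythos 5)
Your proposal is correct and matches the paper's proof essentially step for step: specialize $f=\o p\,q$, observe that $f_c(\lz)=\o{p_c(\lz)}q_c(\lz)$ on $S_c$, and then use the mean value property of the Poisson extension at the center $0$ of $D_c$ to turn $f_c^\yi(0)$ into the $S_c$-integral, i.e.\ the inner product $(p_c|q_c)_{S_c}$. The extra $K_c$-invariance justification of the mean value property you supply is a standard observation and does not change the route.
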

\begin{proof} Let $h(z)=\widehat{\o p q}(z)$ be the Poisson extension of $\o{p(s)}q(s)$. For all $\lz\in S_c$, we have $c+\lz\in S$ and hence
$$h_c(\lz)=h(c+\lz)=\o{p(c+\lz)}q(c+\lz)=\o{p_c(\lz)}q_c(\lz)$$
Since $h_c(\lz)$ is harmonic, the mean value property applied to the Peirce $0$-space $Z_c$ yields
$$h(c)=h_c(0)=h_c^\yi(0)=\I_{S_c}d\lz\,h_c(\lz)=\I_{S_c}d\lz\,\o{p_c(\lz)}q_c(\lz)=(p_c|q_c)_{S_c}.$$
\end{proof}
\begin{proposition}\label{df} For polynomials $f\in\PL(Z\xx\o Z)$ we have $U S_f U^*-\t T_{\h f}\in\LL^{n,\oo}$ and
$$U [S^*_f, S_f] U^*-[\t T^*_{\h f},\t T_{\h f}]\in\LL^{n/2,\oo}.$$
Here $\h f$ is the Poisson extension restricted to $S_1$.
\end{proposition}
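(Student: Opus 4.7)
By bi-linearity it suffices to treat $f = \o p\,q$ for holomorphic $p,q\in\PL(Z)$. By Theorem \ref{r}, $S_f = PT_p^*T_qP = S_p^*S_q + R$ with $R = \S_{|\lb|\ge 1} PT_p^*P^\lb T_qP$. Iterating Proposition \ref{ss}---the formula $US_uU^* = \t T_u\F{(\t\lL+ra/2)/(\t\lL+a/2)}$, its adjoint, and the shift identity $\t T_u\t\lL = (\t\lL-1)\t T_u$---one first obtains $US_p^*S_qU^* = \t T_p^*\t T_q\,\Phi(\t\lL)$ modulo $\LL^{n,\oo}$, for a diagonal scalar $\Phi(\t\lL)=I+O(\t\lL^{-1})$. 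Since $\Phi(\t\lL)-I\in\LL^{n,\oo}$ by Lemma \ref{c}, and $\t T_p^*\t T_q = \t T_{(\o pq)|_{S_1}}$ on $H^2(S_1)$, this yields
\[
US_p^*S_qU^*-\t T_{(\o pq)|_{S_1}}\in\LL^{n,\oo}.
\]

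The main remaining task is to match $URU^*$ with the Poisson correction $\t T_{(\h f-\o pq)|_{S_1}}$ modulo $\LL^{n,\oo}$. On the symbol side, Lemma \ref{zz} gives for $c\in S_1$
\[
\h f(c)-\o{p(c)}q(c) = \I_{S_c}d\eta\,\[\o{p(c+\eta)}q(c+\eta)-\o{p(c)}q(c)\],
\]
capturing the non-harmonic Peirce-transverse content of $\o p\,q$. On the operator side, the proof of Proposition \ref{q} shows that each $PT_p^*P^\lb T_qP$ equals $PT_p^*A^\lb T_qP\cdot(\lL+1)/(c_\lb\lL+\t c_\lb)+B_\lb$ with $B_\lb\in\BL$, and $PT_p^*A^\lb T_qP\in\AL$ by Corollary \ref{v}. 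The plan is to verify, by tracking Peter--Weyl and Peirce data, that this $\AL$-piece pulls back under $U$ to the Toeplitz operator on $H^2(S_1)$ whose symbol is precisely the order-$|\lb|$ Peirce-transverse term of the Shilov integrand above, so that summing over $|\lb|\ge 1$ reassembles $\h f-\o pq$. Combined with the first paragraph, this gives the first assertion.

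For the second assertion, set $M := US_fU^*-\t T_{\h f}\in\LL^{n,\oo}$. The analysis above reveals that, modulo $\LL^{n/2,\oo}$, $M$ has the structural form $\S_j \t T_{h_j}(\t\lL+c_j)^{-1}$. Expanding
\[
[US_f^*U^*,US_fU^*] - [\t T_{\h f}^*,\t T_{\h f}] = [\t T_{\h f}^*,M]+[M^*,\t T_{\h f}]+[M^*,M],
\]
each mixed commutator splits via Leibniz as $[\t T_{\h f}^*,\t T_{h_j}]R_j+\t T_{h_j}[\t T_{\h f}^*,R_j]$. The first summand lies in $\LL^{n,\oo}\cdot\LL^{n,\oo}\ic\LL^{n/2,\oo}$ (Toeplitz commutators on the strictly pseudoconvex $D\ui Z_1^\ol$ belong to $\LL^{n,\oo}$ \cite{EZ}); the second uses, as in the lemma following Lemma \ref{ba} transplanted to $H^2(S_1)$, that $[\t T_{\h f}^*,(\t\lL+c)^{-1}]$ is of order $\t\lL^{-2}$ and hence in $\LL^{n/2,\oo}$. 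The remaining $[M^*,M]$ is a product of two $\LL^{n,\oo}$ operators, so in $\LL^{n/2,\oo}$.

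The central obstacle is the middle identification: matching the operator-theoretic projection defect $\S_{|\lb|\ge 1}PT_p^*P^\lb T_qP$ with the analytic Poisson-extension correction $\h f-\o pq$ at the level of Toeplitz symbols on $S_1$. This demands a quantitatively compatible combination of the Peter--Weyl decomposition on $\PL(Z)$, the Jordan-theoretic Peirce decomposition at a minimal tripotent $c\in S_1$, and the Shilov integration from Lemma \ref{zz}, matching each operator level $\lb$ to the corresponding order-$|\lb|$ Peirce-transverse term of the symbol.
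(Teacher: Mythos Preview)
Your proposal contains a genuine gap precisely where you locate ``the central obstacle'': the identification of $URU^*$ with $\t T_{(\h f-\o pq)|_{S_1}}$ modulo $\LL^{n,\oo}$ is only stated as a plan, not proved. Carrying this out level-by-level in $\lb$ would require, for each $\lb$, an explicit symbol computation for $PT_p^*A^\lb T_qP\in\AL$ after transport by $U$, and then a matching with the order-$|\lb|$ term of the Shilov integral in Lemma \ref{zz}. Nothing earlier in the paper gives such an explicit symbol formula for general elements of $\AL$; Proposition \ref{q} and Corollary \ref{v} only place these operators in $\AL+\BL_\lL$ without computing symbols. Moreover, the structural claim you need for the second assertion --- that $M=US_fU^*-\t T_{\h f}$ has the form $\S_j\t T_{h_j}(\t\lL+c_j)^{-1}$ modulo $\LL^{n/2,\oo}$ --- is asserted on the basis of ``the analysis above,'' which is exactly the part you have not carried out.

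The paper sidesteps this entirely. Instead of decomposing $S_f=S_p^*S_q+R$ with the \emph{given} $p,q$, it invokes Theorem \ref{mr} together with Lemma \ref{y} to write $S_f=\S_i S_{p_i}^*S_{q_i}+B$ for some \emph{new} polynomials $p_i,q_i$ and some $B\in\BL_\lL\ic\LL^{n,\oo}$. The symbol identification is then obtained in one stroke from the $C^*$-algebraic symbol map $\ls_1$ of \cite[Theorem 3.12]{U2}: applying $\ls_1$ to both sides at $c\in S_1$ gives $(p_c|q_c)_{S_c}=\S_i\o{p_i(c)}q_i(c)$, and by Lemma \ref{zz} the left side equals $\h f(c)$. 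Thus $\h f|_{S_1}=\S_i\o p_iq_i$, and Proposition \ref{ss} handles each $US_{p_i}^*S_{q_i}U^*$. For the second assertion the paper again avoids your commutator expansion of $M$: since $B\in\BL_\lL$ one has $[S_f^*,S_f]-[\S_iS_{q_i}^*S_{p_i},\S_iS_{p_i}^*S_{q_i}]\in\LL^{n/2,\oo}$ directly from the algebra of $\BL_\lL$, and the difference $U[\S_iS_{q_i}^*S_{p_i},\S_iS_{p_i}^*S_{q_i}]U^*-[\t T_{\h f}^*,\t T_{\h f}]$ is handled by the purely algebraic Lemma \ref{sa} and its corollary. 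The upshot is that the missing ingredient in your approach is the symbol map $\ls_1$; without it, your Peirce-level matching programme has no established mechanism for completion.
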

\begin{proof} Without loss of generality we may suppose that $f=\o p q$ for $p,q\in\PL(Z).$  By Theorem \er{mr}, Proposition \er{aab}  and Lemma \er{y}, there exist $B\in\BL_\lL$ and finitely many  $p_i,q_i\in\PL(Z)$ such that
$$S_f=PT_p^*T_q P=B+\S_i S_{p_i}^*S_{q_i}.$$
By the definition of $\BL_\lL$, Proposition \er{aab} and Lemma (4.1), we have $S_f-\S_i S_{p_i}^*S_{q_i}\in \LL^{n,\oo}$ and
$[S^*_f,S_f]-\[\S_i S_{q_i}^*S_{p_i},\S_i S_{p_i}^*S_{q_i}\]\in\LL^{n/2,\oo}$. For any $c\in S_1,$ the symbol map in \cite[Theorem 3.12]{U2} is given by
$$(\ls_1 S_f)(c)=(1_c\xt 1_c)T_{p_c}^*T_{q_c}(1_c\xt 1_c)=(p_c|q_c)_{S_c}(1_c\xt 1_c),$$
and
$$\ls_1\(\S_i S_{p_i}^* S_{q_i}\)(c)=\S_i\o{p_i(c)}q_i(c)(1_c\xt 1_c).$$
With Lemma \er{zz} it follows that
$$\h f|_{S_1}=\S_i\o p_i q_i.$$
Since $U S_{p_i}^* S_{q_i}U^*=\t B+\t T_{p_i}^*\t T_{q_i}=\t B+\t T_{\o p_i q_i},$ it follows that
$$U S_f U^*=\t T_{\S_i\o p_i q_i}+B=\t T_{\h f}+B$$
for $B\in\LL^{n,\oo}$. Therefore $U S_f U^*-\t T_{\h f}=U S_f U^*-\S_i\t T_{\o p_i q_i}\in\LL^{n,\oo}$ and
\begin{eqnarray*}&&U\[S^*_f,S_f\]U^*-\[\t T^*_{\h f},\t T_{\h f}\]\\&=&U\(\[S^*_f,S_f\]-\[\S_i S_{q_i}^* S_{p_i},\S_i S_{p_i}^*S_{q_i}\]\)U^*-\(U\[\S_i S_{q_i}^*S_{p_i},\S_i S_{p_i}^*S_{q_i}\]U^*-\[\t T^*_{\h f},\t T_{\h f}\]\)\in\LL^{n/2,\oo}.\end{eqnarray*}
\end{proof}
The explicit computation of the Dixmier trace uses the results of \cite{EZ} on strictly pseudo-convex domains. Let $\lS=\{z\in Z:\mbox{rank}(z)=1\}.$ Then $S_1\ic\lS$ has the defining function $r(z)=(z|z)-1.$ Therefore the contact $1$-form
$\lh=(\dl r-\o\dl r)/(2i)$ on $S_1$ \cite [Section 2.1]{EZ} is given by
$$\lh_c w=\f{(w|c)-(c|w)}{2i}$$
for all $c\in S_1$ and $w\in T_c(\lS)\ic Z.$ It follows that
$$(d\lh)_c(w_1,w_2)=\f{(w_1|w_2)-(w_2|w_1)}i.$$
Since $T_c(\lS)=Z^2_c\oplus Z^1_c=\Cl\,c\oplus Z^1_c$ we may write $w=i\la\,c+v,$ with $\la\in\Cl$ and $v\in Z_c^1.$ Then
$\lh_c(i\la\,c+v)=\la.$ It follows that $Ker(\lh_c)=Z^1_c$ and the Reeb vector field $E_\perp$ \cite[p. 614]{EZ} is given by $c\mapsto ic.$ Restricted to the tangent space $T_c(S_1)=i\Rl\,c\oplus Z^1_c,$ the $2$-form $d\lh$ has the radical
$i\Rl\,c=\Rl\,E_\perp$ and is non-degenerate on $Ker(\lh_c).$ Every $\lq\in\CL^\oo(S_1)$ defines a vector field $Z_\lq\in Ker(\lh)$ such that
$$d\lh(X,Z_\lq)=X\,\lq$$
for all vector fields $X\in Ker(\lh).$ For $\lf,\lq\in\CL^\oo(S_1)$ we obtain the {\bf boundary Poisson bracket}
$$\{\lf,\lq\}_\flat=d\lh(Z_\lf,Z_\lq)=Z_\lf\,\lq.$$
\begin{theorem}\label{b} Let $f_j,g_j\in\PL(Z\xx\o Z).$ Then
$$tr_\lo[S_{f_1},S_{g_1}]\cdots[S_{f_n},S_{g_n}]=C\,\I_{S_1}ds\P_{j=1}^n\{\h f_j,\h g_j\}_\flat,$$
where $ds$ is the normalized $K$-invariant measure, $\h f$ is the Poisson extension of $f$ and $\{\lf,\lq\}_\flat$ denotes the boundary Poisson bracket. The constant
$$C=\f1{(2\lp i)^n}\I_{S_1}\lh\yi\f{(d\lh)^{n-1}}{n!}$$
will be computed in the following Proposition \er{gam}.
\end{theorem}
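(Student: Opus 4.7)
The strategy is to transfer the entire product through the unitary $U:H^2_1(S)\to H^2(S_1)$ and then invoke the Engli\v{s}-Zhang \cite{EZ} Dixmier-trace formula on the strictly pseudo-convex variety $D\ui Z_1^\ol,$ whose smooth Shilov boundary is $S_1.$ Since the Dixmier trace is unitarily invariant and tracial, it suffices to compute
$$tr_\lo\(\P_{j=1}^n U[S_{f_j},S_{g_j}]U^*\).$$
Writing each real-analytic polynomial $f_j=\S_i\o p_i q_i$ (and similarly for $g_j$) yields $S_{f_j}=\S_i S_{p_i}^*S_{q_i},$ so $[S_{f_j},S_{g_j}]$ decomposes into sums of operators of the form $[S_p^*S_q,S_\lf^*S_\lq],$ to which the corollary following Lemma \er{sa} applies. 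Thus
$$U[S_{f_j},S_{g_j}]U^*=[\t T_{\h f_j},\t T_{\h g_j}]+E_j,\qquad E_j\in\LL^{n/2,\oo},$$
while each commutator $[\t T_{\h f_j},\t T_{\h g_j}]\in\LL^{n,\oo}$ by the classical Schatten estimates on strictly pseudo-convex domains (as used in Proposition \er{df}).

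Expanding $\P_{j=1}^n([\t T_{\h f_j},\t T_{\h g_j}]+E_j)$ into the main term $\P_j[\t T_{\h f_j},\t T_{\h g_j}]$ plus $2^n-1$ mixed terms, consider any mixed term with $k\ge 1$ factors from $\LL^{n/2,\oo}$ and $n-k$ factors from $\LL^{n,\oo}.$ H\"older's inequality for Macaev ideals yields membership in $\LL^{p,\oo}$ with
$$\f1p=\f{2k}n+\f{n-k}n=\f{n+k}n,$$
so $p=n/(n+k)<1.$ For any $q<1$ the inclusion $\LL^{q,\oo}\ic\LL^1$ holds, since $\lm_j(T)=O(j^{-1/q})$ with $1/q>1$ is summable, and the Dixmier trace annihilates the trace class. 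Hence every mixed term contributes zero and
$$tr_\lo\P_{j=1}^n[S_{f_j},S_{g_j}]=tr_\lo\P_{j=1}^n[\t T_{\h f_j},\t T_{\h g_j}].$$

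The remaining Dixmier trace is computed on the Hardy space $H^2(S_1)$ of the strictly pseudo-convex variety $D\ui Z_1^\ol,$ whose only singular point is the origin and hence lies outside the smooth compact $K$-homogeneous Shilov boundary $S_1.$ The contact geometry spelled out before the theorem (the $1$-form $\lh,$ the Reeb field, and the boundary Poisson bracket $\{\cdot,\cdot\}_\flat$) is precisely the Engli\v{s}-Zhang setting, and Lemma \er{zz} identifies $\h f_j|_{S_1}$ as the boundary symbol of $\t T_{\h f_j}.$ Applying \cite{EZ} to products of $n$ Toeplitz commutators on $H^2(S_1)$ yields
$$tr_\lo\P_{j=1}^n[\t T_{\h f_j},\t T_{\h g_j}]=C\I_{S_1}ds\P_{j=1}^n\{\h f_j,\h g_j\}_\flat,$$
with the stated contact-volume constant $C,$ completing the proof.

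The principal obstacle is the H\"older bookkeeping: every mixed term in the telescoped expansion must be traced through the Macaev scale to verify it lies in an ideal of index strictly below $1.$ A secondary delicate point is justifying the invocation of \cite{EZ} on the singular variety $D\ui Z_1^\ol;$ the key observation is that the Engli\v{s}-Zhang argument is local on the smooth boundary $S_1,$ where the defining function $r(z)=(z|z)-1$ is strictly plurisubharmonic and the contact data are intrinsic, so the interior singularity at the origin plays no role in the boundary-integral formula.
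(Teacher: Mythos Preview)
Your overall strategy matches the paper's exactly---transfer through $U$, show each commutator differs from the corresponding $\tilde T$-commutator by an $\LL^{n/2,\infty}$ term, kill mixed terms by H\"older, then apply Engli\v{s}--Zhang---and your ideal bookkeeping is fine. But the first step contains a genuine error. You write that $f_j=\sum_i\bar p_i q_i$ ``yields $S_{f_j}=\sum_i S_{p_i}^*S_{q_i}$.'' This is false: by definition $S_{\bar p q}=PT_p^*T_qP$, while $S_p^*S_q=PT_p^*PT_qP$; the discrepancy $PT_p^*P^\perp T_qP$ is nonzero for $r>1$, and controlling it is the whole content of Theorem~\ref{mr}. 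Even granting your identity, the corollary after Lemma~\ref{sa} applied to the \emph{original} $p_i,q_i$ would produce $[\tilde T_{f_j|_{S_1}},\tilde T_{g_j|_{S_1}}]$, not $[\tilde T_{\hat f_j},\tilde T_{\hat g_j}]$. Since $S_1\not\subset S$ for $r>1$, the restriction $f_j|_{S_1}$ and the Poisson extension $\hat f_j|_{S_1}$ genuinely differ, so your final integrand would be wrong; your later mention of Lemma~\ref{zz} does not bridge this gap.

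The paper handles this through Proposition~\ref{df}, whose proof uses Theorem~\ref{mr} and Lemma~\ref{y} to obtain a \emph{different} decomposition $S_f=B+\sum_i S_{p_i}^*S_{q_i}$ with $B\in\BL_\lL$, and then invokes the $C^*$-symbol map $\sigma_1$ of \cite{U2} together with Lemma~\ref{zz} to show $\sum_i\bar p_i q_i|_{S_1}=\hat f|_{S_1}$ for \emph{those} $p_i,q_i$. You should cite Proposition~\ref{df} directly (extended from $[S_f^*,S_f]$ to $[S_f,S_g]$ by polarization, using $S_f^*=S_{\bar f}$) in place of your first paragraph; after that your argument and the paper's coincide.
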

\begin{proof} In general, if $T_1\in\LL^{n/2,\oo}$ and $T_2,\ldots,T_n\in\LL^{n,\oo}$ then $T_1T_2\cdots T_n\in\LL^1$ since $\LL^{k,\oo}\ic\LL^{k+\Le}$ for any $\Le>0.$ By Proposition \er{df} it follows that $U[S_{f_1},S_{g_1}]\cdots[S_{f_n},S_{g_n}]U^*-T\in\LL^1,$ where
$$T:=[\t T_{\h f_1},\t T_{\h g_1}]\cdots[\t T_{\h f_n},\t T_{\h g_n}]$$
is a generalized Toeplitz operator on $H^2(S_1)$ of order $-n.$ Applying \cite[Theorem 3]{EZ} it follows that
$$Tr_\lo[S_{f_1},S_{g_1}]\cdots[S_{f_n},S_{g_n}]=Tr_\lo(T)=\f1{(2\lp)^n}\I_{S_1}\lh\yi\f{(d\lh)^{n-1}}{n!}\,\ls_{-n}(T)(x,\lh_x),$$
where $\lh$ is the contact form. By \cite[Section 4]{EZ}, $T$ has the symbol
$$\ls_{-n}(T)=\P_{j=1}^n\,\ls_{-1}[\t T_{\h f_j},\t T_{\h g_j}]=\P_{j=1}^n\,\f1i\{\ls_0\t T_{\h f_j},\ls_0\t T_{\h g_j}\}_\lS
=\P_{j=1}^n\,\f1i\{\h f_j^{(0)},\h g_j^{(0)}\}_\lS$$
in terms of the Poisson bracket of $\lS.$ Here $\lf^{(0)}(t\,c)=\lf(c)$ denotes the $0$-homogeneous extension of
$\lf\in\CL^\oo(S_1).$ Now the assertion follows, since by \cite[Corollary 8]{EZ} we have for $t=1$
$$\f1i\{\lf^{(0)},\lq^{(0)}\}_\lS=Z_\lf\lq=\{\lf,\lq\}_\flat.$$
\end{proof}
Let $V=Z^1_{e_1}$ be the Peirce $1$-space for the minimal tripotent $e_1.$ If $a\neq 2$ or $r=1,$ then $V$ is an irreducible hermitian Jordan triple. If $a=2$ and $r>1$ then $Z=\Cl^{r\xx(r+b)}$ and
$V=\Cl^{(r-1)\xx 1}\oplus\Cl^{1\xx(r+b-1)}$ is a direct sum of two hermitian Jordan triples of rank $1.$ For any irreducible hermitian Jordan triple $V$ let $\lG_V$ denote the Gindikin $\lG$-function for the radial cone $\lO_V\ic V$ \cite{FK}. Let $r_V,\,d_V,\,p_V$ denote the rank, dimension and genus of $V,$ resp.
\begin{proposition}\label{gam} If $a\neq 2$ or $r=1,$ we have
$$\f1{(2\lp)^n}\I_{S_1}\lh\yi\f{(d\lh)^{n-1}}{(n-1)!}=\f{\lG_V(p_V-\f{n-1}{r_V})}{\lG_V(p_V)};$$
If $a=2$ we have
$$\f1{(2\lp)^n}\I_{S_1}\lh\yi\f{(d\lh)^{n-1}}{(n-1)!}=\f{1}{\lG(r)\,\lG(r+b)}.$$
In the rank $r=1$ case, where $Z=\Cl^d$ and $S_1=\Sl^{2n-1},$ we have $n=d=1+b$ and both formulas imply
$$\f1{(2\lp)^n}\I_{S_1}\lh\yi(d\lh)^{n-1}=1.$$
\end{proposition}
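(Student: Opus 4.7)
The plan is to evaluate the contact-volume integral in three stages: a pointwise identification of $\eta\wedge(d\eta)^{n-1}/(n-1)!$ with a constant multiple of the Euclidean surface volume on $S_1$, a polar-coordinate reduction converting the total volume into a Gaussian integral on the rank-one cone $Z_1^\circ$, and a Jordan-theoretic evaluation of that Gaussian through Gindikin's $\Gamma_V$-formula for the Peirce one-space $V=Z^1_{e_1}$.

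For Stage 1, both $\eta\wedge(d\eta)^{n-1}/(n-1)!$ and the Riemannian volume form $dV_R$ on $S_1$ induced by the ambient Hermitian inner product are $K$-invariant top-degree forms, hence proportional, and it suffices to compute the ratio at the base point $c=e_1$. Differentiating the tripotent equation $\{cc^*c\}=c$ at $c=e_1$ and applying the Peirce rules yields $T_{e_1}S_1=\mathbb{R}(ie_1)\oplus V$; the Reeb direction $ie_1$ gives $|\eta(ie_1)|=1$, while on $V$ the identity $d\eta(w_1,w_2)=2\,\mathrm{Im}(w_1|w_2)$ identifies $d\eta|_V$ with twice the standard K\"ahler form of $V$, so $(d\eta)^{n-1}/(n-1)!$ equals $2^{n-1}$ times the Euclidean volume form of $V$. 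Multiplying, we get $\eta\wedge(d\eta)^{n-1}/(n-1)!=2^{n-1}\,dV_R$ on all of $S_1$. For Stage 2, the polar diffeomorphism $(0,\infty)\times S_1\ni(t,c)\mapsto tc\in Z_1^\circ\setminus\{0\}$ pulls the Euclidean measure on $Z$ back to $t^{2n-1}\,dt\,dV_R(c)$, so testing against $e^{-(z|z)}$ gives
$$\int_{Z_1^\circ}e^{-(z|z)}\,dz=\tfrac{1}{2}\Gamma(n)\,\mathrm{Vol}_R(S_1),$$
reducing the proposition to evaluating the Gaussian integral on the rank-one cone. For Stage 3, use the Peirce decomposition $Z=\mathbb{C}e_1\oplus V\oplus Z^0_{e_1}$ to introduce birational coordinates $(\lambda,v)\in\mathbb{C}\times V$ on $Z_1^\circ$, in which a generic rank-one element has a unique representation $z=\lambda(e_1+v+Q(v))$ with $Q\colon V\to Z^0_{e_1}$ a quadratic map built from the Jordan triple product. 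The pullbacks of $(z|z)$ and of the Euclidean Jacobian factor as explicit polynomials in $(v|v)$; integrating $\lambda$ against a Gaussian on $\mathbb{C}$ leaves a Faraut--Koranyi radial integral on $V$, evaluated by Gindikin's product formula as $\pi^{d_V}\Gamma_V(p_V-(n-1)/r_V)/\Gamma_V(p_V)$ when $V$ is irreducible (the case $a\neq 2$ or $r=1$). When $a=2$, the Peirce one-space splits as $V_1\oplus V_2$ with $V_j$ rank-one hermitian triples of complex dimensions $r-1$ and $r+b-1$, the radial integral factorizes over the two summands, and the product collapses to $1/(\Gamma(r)\Gamma(r+b))$. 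The $r=1$ sphere case provides a cross-check that recovers the classical normalization $1$.

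The principal obstacle is Stage 3: producing the correct Jordan-theoretic parameterization of $Z_1^\circ$ through the quadratic map $Q$ and computing the Jacobian and metric pullback in closed form, so that the Gaussian integral separates in exactly the way required to invoke Gindikin's formula. The reducible case $a=2$ must be handled in parallel, since the irreducible Gindikin formula does not apply directly, although the split integral still reduces cleanly to a product of two classical $\Gamma$-functions.
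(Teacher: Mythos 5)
Your route is genuinely different from the paper's. The paper never reduces to a Gaussian integral over $Z_1^\ol$: it uses the fibration $\lp:S_1\to M$ (the circle bundle over the compact manifold $M$ of rank-one Peirce $2$-spaces), writes $d\lh=\lp^*\lT$ for a $K$-invariant $2$-form $\lT$ on $M$, peels off a factor $2\lp$ from the $S^1$-fiber, and then recognizes $\ls^*\lT^{n-1}/(n-1)!$ under the affine chart $\ls=\lp\oc\Lt$ as a Moebius-invariant top form on $V$ — hence, by uniqueness, a constant multiple of $\lD_V(v,-v)^{-p_V}\,d\ll(v)$ — with the constant $2^{n-1}$ computed at $0\in V$. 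This sidesteps all explicit Jacobian computations. Your Stages 1 and 2 (identifying $\lh\yi(d\lh)^{n-1}/(n-1)!$ with $2^{n-1}$ times the induced Riemannian volume on $S_1$, then passing to a Gaussian on $Z_1^\ol$ by polar coordinates) are correct, equivalent in spirit, and lead to the clean reduction
$$\f1{(2\lp)^n}\I_{S_1}\lh\yi\f{(d\lh)^{n-1}}{(n-1)!}=\f1{\lp^n\lG(n)}\I_{Z_1^\ol}e^{-(z|z)}\,dm(z).$$

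The gap is in Stage 3. Writing $z=\ll\,\Lt(v)$ with $\Lt(v)=e_1+v+\{ve_1^*v\}$ gives $(z|z)=|\ll|^2\big(1+(v|v)+(\{ve_1^*v\}|\{ve_1^*v\})\big)$ and a Jacobian $|\ll|^{2(n-1)}J_\Lt(v)$. Neither $(\{ve_1^*v\}|\{ve_1^*v\})$ nor $J_\Lt(v)$ is a polynomial in $(v|v)$ once $V$ has rank $r_V\ge 2$ (types II, IV, V, VI): they involve the quasi-determinant $\lD_V$, not just the trace form. Your claim that the metric and Jacobian "factor as explicit polynomials in $(v|v)$" is only true in the rank-one cases ($a=2$ with $r>1$, where $V$ splits into rank-one pieces, and $r=1$). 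To invoke Gindikin's formula you would have to prove the Jordan-theoretic identity
$$\f{J_\Lt(v)}{\big(1+(v|v)+(\{ve_1^*v\}|\{ve_1^*v\})\big)^n}=\lD_V(v,-v)^{-p_V},$$
which is precisely the statement that $\Lt$ carries the Moebius-invariant measure on $V$ to the induced measure on $\lS$ — a nontrivial fact you cannot obtain by "explicit polynomials in $(v|v)$." The paper's invariance argument is designed exactly to avoid that computation: since $\lT$ is $K'$-invariant on $M$ and $\ls$ is $K'$-equivariant, $\ls^*\lT^{n-1}$ is automatically Moebius-invariant, and the constant of proportionality follows by evaluation at the origin. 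Either adopt that argument or supply a proof of the displayed identity; as written, Stage 3 does not go through.
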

\begin{proof} Any irreducible hermitian Jordan triple $V$ has a 'quasi-determinant' function $\lD_V(u,v)$ such that the invariant measure on its conformal compactification $M,$ containing $V$ as an open dense subset of full measure, is a multiple of $\lD_V(v,-v)^{-p_V}\,d\ll(v),$ where $d\ll(v)$ is Lebesgue measure for the normalized inner product. Moreover, by \cite{EU} we have the polar integration formula
\begin{eqnarray}\label{vol}\I_V\f{d\ll(z)}{\lp^{d_V}}\lD(z,-z)^{-p_V}=\f{\lG_V(p_V-\f{d_V}{r_V})}{\lG_V(p_V)}.\end{eqnarray}
Let $M$ denote the compact complex manifold of all Peirce $2$-spaces $U\ic Z$ of rank $1.$ There is a canonical map
$$\lp:\lS\to M$$
which maps $z\in\lS$ onto its Peirce $2$-space $Z^2_z.$ In this way, $\lS$ becomes a hermitian holomorphic line bundle over $M$ which can be identified with the tautological line bundle $\LL=\bigcup_{U\in M}U.$ The subset $S_1\ic\lS$ corresponds to the circle bundle $\bigcup_{U\in M}S_U,$ where $S_U\al\Sl^1$ is the Shilov boundary of $U\in M.$ The holomorphic map $\lp$ satisfies
$$\mbox{ker}(d_c\lp)=Z^2_c,$$
since $Z^2_c\ic\mbox{ker}(d_c\lp)$ and both spaces are $1$-dimensional. Therefore $d\lh$ vanishes on
$$T_c(S_1)\ui\mbox{ker}(d_c\lp)=i\Rl\cdot c.$$
As a consequence there exists a $K$-invariant $2$-form $\lT$ on $M$ such that $d\lh=\lp^*\lT.$ Now $\lh,$ restricted to $S_U,$ is the usual contact form on $\Sl^1$ of volume $2\lp.$ It follows that
$$\f1{(2\lp)^n}\I_{S_1}\lh\yi\f{(d\lh)^{n-1}}{(n-1)!}=\f1{(2\lp)^n}\I_{S_1}\lh\yi\f{\lp^*\lT^{n-1}}{(n-1)!}
=\f1{(2\lp)^{n-1}}\I_{M}\f{\lT^{n-1}}{(n-1)!}.$$
In order to compute this integral, let $V=Z^1_{e_1}.$ A local coordinate for $M$ is given by the map $\ls:=\lp\oc\Lt:V\to M,$ where $\Lt:V\to\lS$ is defined by
$$\Lt(v)=e_1+v+\{ve_1^*v\}.$$
The semi-simple part $K'$ of $K$ acts transitively on $M,$ and induces a 'Moebius-type' biholomorphic action on $V$ such that $\ls$ becomes $K'$-equivariant. We have
$$\Lt^*d\lh=\Lt^*(\lp^*\lT)=\ls^*\lT.$$
Since $(d_0\Lt)v=v$ at the origin $0\in V$, the pull-back $\Lt^*(d\lh)|_v(v_1,v_2)=(d\lh)_{\Lt(v)}((d_v\Lt)v_1,(d_v\Lt)v_2)$ satisfies
$$\ls^*\lT|_0(v_1,v_2)=\Lt^*(d\lh)|_0(v_1,v_2)=\f{(v_1|v_2)-(v_2|v_1)}{i}.$$
Using complex coordinates $v_j$ with respect to an orthonormal basis of $V=T_0(V)$ this means
$$\ls^*\lT|_0=\S_{j=1}^{n-1}\f{d\o v_j\yi d v_j}{i}.$$
Now assume that $a\neq 2$ or $r=1.$ Then $M$ is irreducible. Since $\lT$ is invariant under $K,$ it follows that
$\ls^*\lT$ is invariant under the Moebius action. Since $d_V=n-1,$ we obtain for the volume form
$$\f{\ls^*\lT^{n-1}}{(n-1)!}=C\cdot\lD_V(v,-v)^{-p_V}d\ll(v),$$
where $C$ is a constant. Evaluating at $0\in V$ and using
\begin{eqnarray}\label{mea}d\ll(v)=\P_{j=1}^{n-1}\f{d\o v_j\yi d v_j}{2i}=\f1{(n-1)!}\(\S_{j=1}^{n-1}\f{d\o v_j\yi d v_j}{2i}\)^{n-1}
=\f1{2^{n-1}}\f{(\ls^*\lT|_0)^{n-1}}{(n-1)!}.\end{eqnarray}
it follows that $C=2^{n-1}.$ Since $\ls$ is a Zariski dense open embedding of full measure we obtain
$$\f1{(2\lp)^{n-1}}\I_{M}\f{\lT^{n-1}}{(n-1)!}=\f1{(2\lp)^{n-1}}\I_V\f{\ls^*\lT^{n-1}}{(n-1)!}=\f{C}{2^{n-1}}\I_V\f{d\ll(v)}{\lp^{n-1}}\,\lD_V(v,-v)^{-p_V}=\f{\lG_V(p_V-\f{n-1}{r_V})}{\lG_V(p_V)}$$
by applying \er{vol} to the irreducible hermitian Jordan triple $V=Z^1_{e_1}.$
Now assume $a=2$ and $r>1.$ Then $Z=\Cl^{r\xx(r+b)}$ and $M$ is reducible. More precisely,
$$\lS=\{z\in\Cl^{r\xx(r+b)}:\,rank(z)=1\}=\{\lx_1\lx_2:\,0\neq\lx_1\in\Cl^{r\xx 1},\,0\neq\lx_2\in\Cl^{1\xx(r+b)}\}.$$
Consider the associated projective spaces $M_1=\Pl(\Cl^{r\xx 1})=\{[\lx_1]:\,0\neq\lx_1\in\Cl^{r\xx 1}\}\al\Pl^{r-1}$ and
$M_2=\Pl(\Cl^{1\xx(r+b)})=\{[\lx_2]:\,0\neq\lx_2\in\Cl^{1\xx(r+b)}\}\al\Pl^{r+b-1}.$ Then $M=M_1\xx M_2$ is a direct product via the identification $([\lx_1],[\lx_2])\mapsto Ran(\lx_1\lx_2).$ For $i\in\{1,2\},$ the map $\lp_i:\lS\to M_i$ given by $\lx_1\lx_2\mapsto[\lx_i]$ is well-defined and the canonical map $\lp:\lS\to M_1\xx M_2$ is a product
$$\lp(\lx_1\lx_2)=([\lx_1],[\lx_2])=(\lp_1(\lx_1\lx_2),\lp_2(\lx_1\lx_2)).$$
Now $\lT=\lT_1\oplus\lT_2$ is the direct sum of $K'$-invariant $2$-forms $\lT_i$ on $M_i.$ Using the binomial theorem for (commuting) $2$-forms, the corresponding volume form is
$$\f{\lT^{n-1}}{(n-1)!}=\f{\lT_1^{n_1}}{n_1!}\yi\f{\lT_2^{n_2}}{n_2!}$$
for the dimensions $n_1=r-1,n_2=r+b-1$ adding up to $n_1+n_2=2(r-1)+b=n-1.$ It follows that
$$\f1{(2\lp)^{n-1}}\I_{M}\f{\lT^{n-1}}{(n-1)!}=\f1{(2\lp)^{n_1}}\I_{M_1}\f{\lT_1^{n_1}}{n_1!}\,\f1{(2\lp)^{n_2}}\I_{M_2}\f{\lT_2^{n_2}}{n_2!}.$$
In order to compute these integrals, put $V_1:=\Cl^{(r-1)\xx 1}$ and $V_2=\Cl^{1\xx(r+b-1)}.$ Then
$$V=Z^1_{e_1}=\{\bb0{v_2}{v_1}0:\,v_i\in V_i\}\al V_1\xx V_2.$$
The local coordinate $\ls(v_1,v_2)=(\ls_1(v_1),\ls_2(v_2))$ is of product type, where $\ls_i(v_i):=[1,v_i].$ In fact, putting
$v=\bb0{v_2}{v_1}0\in V,$ we obtain
$$\Lt(v)=e_1+v+\{ve_1^*v\}=\bb{1}{v_2}{v_1}{v_1v_2}=\ba{1}{v_1}\,\ab{1}{v_2}$$
and hence
$$\ls(v)=\lp(\Lt(v))=[\ba{1}{v_1}],[\ab{1}{v_2}].$$
The semi-simple part $K'=SU(r)\xx SU(r+b)$ of $K=S(U(r)\xx U(r+b))$ acts transitively on each factor $M_i$ and induces a 'Moebius-type' biholomorphic action on $V_i$ such that $\ls_i$ becomes $K'$-equivariant. Since $\lT_i$ is invariant under $K',$ it follows that $\ls_i^*\lT_i$ is invariant under this Moebius action. This implies for the volume form
$$\f{\ls_i^*\lT_i^{n_i}}{n_i!}=C_i\cdot(1+(v_i|v_i))^{-1-n_i}\,\,d\ll_i(v_i),$$
where $C_i$ is a constant. Using the relation
$$d\ll_i(v_i)=\f1{2^{n_i}}\f{(\ls_i^*\lT_i|_0)^{n_i}}{n_i!}$$
analogous to \er{mea}, it follows that $C_i=2^{n_i}.$ Since $\ls_i:V_i\to M_i$ is a Zariski dense open embedding of full measure we obtain
$$\f1{(2\lp)^{n_i}}\I_{M_i}\f{\lT_i^{n_i}}{n_i!}=\f1{(2\lp)^{n_i}}\I_{V_i}\f{\ls_i^*\lT_i^{n_i}}{n_i!}
=\f{C_i}{2^{n_i}}\I_{V_i}\f{d\ll_i(v_i)}{\lp^{n_i}}\,(1+(v_i|v_i)^{-1-n_i}=\f{\lG(1)}{\lG(1+n_i)}=\f1{n_i!}$$
by applying \er{vol} to the irreducible hermitian Jordan triple $V_i.$
\end{proof}

Finally, let us mention a relation involving numerical invariants of the domain $D$ and $V=Z^1_{e_1}$, which would make the formulas  more tractable.
\begin{lemma}\label{zv} Suppose that $a\neq 2.$ Then the rank $r_V$ and the genus $p_V$ of $V=Z^1_{e_1}$ satisfy the relation
$$r_V\,p_V=ra+b.$$
As a consequence, the $\lG$-function quotient in Proposition \er{gam} can also be expressed as
$$\f{\lG_V(p_V-\f{n-1}{r_V})}{\lG_V(p_V)}=\f{\lG_V(\f{a}{r_V})}{\lG_V(\f{ra+b}{r_V})}.$$
\end{lemma}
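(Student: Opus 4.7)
The plan is to reduce the claim $r_V p_V = ra + b$ to a purely numerical identity on the Peirce invariants of $V$, and then verify it by passing through the classification.

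First, I would invoke the general relations $p_V = a_V(r_V-1) + b_V + 2$ and $\dim V = r_V + a_V\binom{r_V}{2} + r_V b_V$ valid for any irreducible hermitian Jordan triple. A short manipulation combines them into the algebraic identity $r_V p_V = 2\dim V - r_V b_V$. Since the joint Peirce decomposition of $Z$ at the frame $e_1,\ldots,e_r$ identifies $V = Z^1_{e_1}$ with $\bigoplus_{j=2}^r Z_{1j}\op Z_{10}$, and $\dim Z_{1j} = a$, $\dim Z_{10} = b$, we have $\dim V = (r-1)a + b$. Thus the desired equality $r_V p_V = ra + b$ is equivalent to
$$r_V b_V = (r-2)a + b.$$

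I would then verify this identity by going through Loos's classification of irreducible hermitian Jordan triples with $a\ne 2$: symmetric matrices ($a = 1$, $b = 0$), antisymmetric matrices ($a = 4$, $b\in\{0,2\}$), the spin factors ($r = 2$, $a = n-2$, $b = 0$), and the two exceptional types with parameters $(r,a,b) = (2,6,4)$ and $(3,8,0)$. In each case the Peirce 1-space $V$ at a minimal tripotent is itself an irreducible hermitian Jordan triple of classically known type: rank-1 for symmetric matrices, rectangular $2\xx(m-2)$ for antisymmetric $m\xx m$, a spin factor of dimension $n-2$ for the spin factor of dimension $n$, antisymmetric $5\xx 5$ for exceptional type $V$, and exceptional type $V$ itself for exceptional $VI$. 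A short table check confirms $r_V b_V = (r-2)a + b$ in each case. Once this is established, the stated $\lG$-function identity follows by direct substitution: combining $n-1 = a(r-1)+b$ with $r_V p_V = ra + b$ gives $p_V - (n-1)/r_V = a/r_V$ and $p_V = (ra+b)/r_V$, and the ratio rewrites as claimed.

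The main obstacle is the structural input identifying $V = Z^1_{e_1}$ as a Jordan triple in each family. For the classical cases this is transparent from the matrix model (one reads off the rank, $a_V$ and $b_V$ of the off-diagonal first row/column), but for the exceptional types one must invoke the known Peirce 1-space reductions, equivalently data from the Tits--Koecher magic square. This seems to prevent a fully uniform proof that avoids classification, though the case analysis itself is very short once the candidates for $V$ are written down.
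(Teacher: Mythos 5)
Your proposal is correct and follows essentially the same path as the paper: both ultimately invoke the classification of irreducible hermitian Jordan triples and verify the identity case by case over the same five families (II)--(VI) with $a\neq 2$. The one small difference is your preliminary algebraic reduction, combining $p_V=a_V(r_V-1)+b_V+2$, $\dim V=r_V+a_V\binom{r_V}{2}+r_Vb_V$, and $\dim Z^1_{e_1}=(r-1)a+b$ to recast the claim as $r_Vb_V=(r-2)a+b$; this is a pleasant reformulation but does not bypass the classification, and the paper simply tabulates $r_V,p_V$ directly and checks $r_Vp_V=ra+b$ against the table.
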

\begin{proof} We use the classification of hermitan Jordan triples \cite{L,N}. For $a=2,$ we obtain the Jordan triples
$Z=\Cl^{r\xx(r+b)}$ of type (I), for which the relation does not hold. The other cases are listed in the following table
\begin{center}
\begin{tabular}{|c|c|c|c|c|c|c|c|}
\hline
type &   Z           & rank  & a & b  & V & $r_V$  & $p_V$ \\ \hline
(II) & $\Cl^{(2r+\Le)\xx(2r+\Le)}_{\mbox{\tiny asym}} $ & r & 4  & $2\Le$ & $ \Cl^{2\xx(2(r-1)+\Le)}$ &2  & $2r+\Le$  \\ \hline
(III) & $\Cl^{r\xx r}_{\mbox{\tiny symm}} $ & r &1 & 0 & $ \Cl^{r-1}$ &1  & r  \\ \hline
(IV) & $\Cl^d_{\mbox{\tiny spin}} $ & 2 & d-2 & 0& $\Cl^{d-2}_{\mbox{\tiny spin}}$ &2  &d-2  \\ \hline
(V) & $\Ol^{1\xx 2}_\Cl $ & 2 & 6  & 4 & $ \Cl^{5\xx 5}_{\mbox{\tiny asym}}$ &2  & 8\\ \hline
(VI) & $\HL_3(\Ol)\otimes \Cl $ & 3 & 8  & 0 & $ \Ol^{1\xx 2}_\Cl$ &2  & 12 \\ \hline
\end{tabular}\end{center}
\end{proof}

\end{document}